\newcommand{\Pc}{\mathcal{P}}
\newcommand{\aseq}{{\stackrel{\mathrm{a.s.}}{=}}}
\newcommand{\disteq}{{\stackrel{\mathrm{d}}{=}}}
\newcommand{\ONE}{{\mathbf{1}}}
\newcommand{\N}{{\mathbb N}}
\newcommand{\Fc}{\mathcal{F}}
\newcommand{\Nc}{\mathcal{N}}
\newcommand{\Pp}{\mathsf{P}}
\newcommand{\Z}{{\mathbb Z}}
\newcommand{\iid}{i.i.d.\ }
\newcommand{\RZ}{\R\times\Z}
\newcommand{\ZR}{\Z\times\R}
\newcommand{\E}{\mathsf{E}}
\newcommand{\Leb}{\mathrm{Leb}}
\newcommand{\R}{{\mathbb R}}
\newcommand{\HH}{{\mathbb H}}
\newcommand{\GG}{{\mathbb G}}
\newcommand{\eps}{{\varepsilon}}
\newcommand{\argmin}{\mathop{\mathrm{argmin}}}
 \newcommand{\be}{\kappa^{-1}}
 \newcommand{\temp}{\kappa}
 \newcommand{\visc}{\frac{\kappa}{2}}
\newtheorem{theorem}{Theorem}[section]
\newtheorem{lemma}{Lemma}[section]
\numberwithin{equation}{section}
\renewenvironment{proof}[1][Proof]{
{\noindent {\sc #1: }}
}{
{{\hfill $\Box$ \smallskip}}
}
\let\orgdescriptionlabel\descriptionlabel
\renewcommand*{\descriptionlabel}[1]{%
  \let\orglabel\label
  \let\label\@gobble
  \phantomsection
  \edef\@currentlabel{#1}%
  \let\label\orglabel
  \orgdescriptionlabel{#1}%
}
\title[Zero-temperature Limit for Directed Polymers and Burgers Equation]{Zero temperature limit for directed polymers and inviscid limit for stationary
solutions of stochastic Burgers Equation}
\author{Yuri Bakhtin}
\address{Courant Institute of Mathematical Sciences\\ New York University \\ 251 Mercer St, New York, NY 10012 }
\author{Liying Li}
\email{bakhtin@cims.nyu.edu, liying@cims.nyu.edu}
\begin{document}
 \maketitle

\begin{abstract}
We consider a space-continuous and time-discrete polymer model for positive temperature and the 
associated zero temperature  model of last passage percolation type.  In our previous work, we constructed and studied infinite-volume polymer measures and one-sided infinite minimizers for the associated variational principle, and used these objects for the study of global stationary solutions of the Burgers
equation with positive or zero viscosity and random kick forcing, on the entire real line.

In this paper, we prove that in the zero temperature limit, the infinite-volume polymer measures 
concentrate on the one-sided minimizers and that the associated global solutions of the 
viscous Burgers equation with random kick forcing converge to the  global solutions of the
inviscid equation.
\end{abstract}

\section{Introduction}

Various models of directed polymers in random environment along with their zero temperature counterparts
of last passage percolation type have been studied actively in recent decades, see, e.g., books
\cite{Hollander:MR2504175}, \cite{Giacomin:MR2380992}, \cite{Comets:MR3444835} and multiple references therein.  On finite
time intervals, positive temperature polymer measures are defined as Gibbs distributions with a random walk as a free measure
and Boltzmann--Gibbs weights given by the potential accumulated by random walk paths from the
random environment. The corresponding zero temperature models are defined in terms of the energy
minimizing paths.

The most interesting questions concern the large time behavior of the random polymer distributions and
energy minimizers. In particular, it is believed that a large family of models of this kind with fast decorrelation of the stationary random potential in dimension $1+1$ belongs to the KPZ universality class, i.e., satisfies limit theorems under scalings with characteristic exponents $2/3$ and $1/3$
and distributional limits of Tracy--Widom type.

Another basic question concerns the infinite-volume Gibbs distributions for polymer measures and the corresponding ground states, i.e., infinite one-sided or two-sided energy minimizers that are usually called geodesics in the literature on last passage percolation (LPP) and first passage percolation (FPP).
There are numerous results concerning infinite geodesics. In particular, existence-uniqueness of one-sided planar geodesics 
with fixed slope and certain geometric features of the joint behavior of different geodesics are known for several models, see \cite{HoNe3}, \cite{HoNe2}, \cite{HoNe},   \cite{Wu}, \cite{CaPi}, \cite{CaPi-ptrf}, \cite{DamronHanson2014}, \cite{kickb:bakhtin2016}, \cite{Rassoul-Agha--Seppalainen--Yilmaz:2013arXiv1311.3016G},
\cite{Rassoul-Agha--Seppalainen--Yilmaz:2014arXiv1410.4474R}, \cite{Rassoul-Agha--Seppalainen--Yilmaz:2015arXiv151000859G}, and a survey \cite{AHD:2015arXiv151103262A}. However, results about thermodynamic limits for directed polymers are relatively new. The first explicit result of this kind known to us is~\cite{Bakhtin-Khanin:MR2791052}, where 
instead of stationarity a localization condition was imposed on the random potential, so the thermodynamic limit is a random measure on paths with random localization radius. More recently, in 
\cite{Georgiou--Rassoul-Agha--Seppalainen--Yilmaz:MR3395462}, \cite{Rassoul-Agha--Seppalainen--Yilmaz:2013arXiv1311.3016G}
 thermodynamic limits were constructed and studied under certain conditions that were
verified for an exactly solvable lattice model called log-gamma polymer, and certain weak disorder models.  

The first complete set of results for a $1+1$-dimensional model that is not exactly solvable were obtained in~\cite{2016arXiv160704864B}, where time-discrete  and space-continuous polymers based on Gaussian random walks were considered. It was shown that for any positive temperature and any fixed asymptotic slope, with probability one, there exists a family of infinite-volume polymer measures satisfying DLR conditions, concentrated on one-sided infinite paths with prescribed asymptotic slope, and indexed by the endpoint. Moreover, it was shown that these infinite-volume Gibbs measures are almost surely uniquely defined and that they are limits of various kinds  (point-to-point, point-to-line, point-to-distribution) of finite-volume polymer measures. It was also shown that 
the total variation distance between
projections
of different polymer measures with the same asymptotic slope 
 on distant coordinates is asymptotically zero, so they 
tend to overlap and can be effectively coupled.
The results crucially depend on the explicitly known form of the dependence of the free energy density,
(also known as the shape function) on the asymptotic slope. Namely, the shape function is quadratic and thus has uniform curvature.

The first main goal of the present paper is to study the zero-temperature asymptotics of the infinite-volume polymer measures constructed in~\cite{2016arXiv160704864B}. In the finite-volume setting, the asymptotic concentration of Gibbs distributions around finite volume ground states, i.e., energy minimizers, is well-known. In the infinite-volume setting, the energies of
paths are infinite, but it is natural to expect that the infinite one-sided minimizers or geodesics (infinite paths whose restrictions on any finite intervals are minimizers) are relevant for this problem. The existence-uniqueness and joint behavior of one-sided minimizers for the same model was studied
in~\cite{kickb:bakhtin2016}. 

In the present paper, we prove that in the zero-temperature limit, with probability
one, the random infinite volume polymer measures converge to delta-measures concentrated on one-sided minimizers. To the best of our knowledge, this is the first result on zero-temperature limit for
infinite directed polymers to appear in literature. In a sense, given the results 
of~\cite{kickb:bakhtin2016} and~\cite{2016arXiv160704864B},
it amounts to interchanging the order of zero-temperature and infinite-volume (or time horizon) limits.

Papers~\cite{kickb:bakhtin2016} and~\cite{2016arXiv160704864B} were, in fact, primarily motivated by the
ergodic program for randomly forced Burgers equation which is a basic nonlinear evolution equation
that has mutiple connections to various problems from traffic modeling to the large scale structure
of the Universe.  It has  interpretations via fluid dynamics and growth models, and we often use the fluid dynamics interpretation where the equation describes the evolution of velocity fields of moving particles.
It is also is tightly related to Hamilton--Jacobi--Bellman (HJB) equations and can be solved with usual HJB methods. 
  
  The viscosity parameter of the Burgers equation can be interpreted as temperature. In fact, if the viscosity is positive, the Burgers equation can, by the Hopf--Cole transform, be reduced to the linear heat equation with multiplicative potential, and thus solved with the Feynman--Kac formula that in turn can be interpreted as averaging with respect to a polymer measure. In the zero-viscosity case, the Burgers equation can be solved by a variational Hopf--Lax--Oleinik--Hamilton--Jacobi--Bellman (HLOHJB) principle
that can be derived from the large deviation principle for random walk or Brownian motion, see
\cite{FW:MR2953753}. 
As viscosity tends to zero, the polymer measure naturally arising in the Feynman--Kac formula concentrates around paths that minimize action in the HLOHJB variational principle, in precise agreement with zero-temperature limit for finite-volume polymer measures.

The long-term dynamics of the Burgers equation with kick forcing (where a delta-type random force is applied at every integer time and there is no forcing between those kicks) in both positive and zero visosity settings is governed by global stationary solutions whose construction and properties was given in~\cite{kickb:bakhtin2016} and~\cite{2016arXiv160704864B}. It turns out that for each value of the average velocity and almost every realization of the random forcing there is a uniquely compatible global solution that can be seen as a one-point attractor. This statement is often referred to as One Force --- One Solution (1F1S) principle, or synchronization by noise.

The key to understanding 1F1S principle for the Burgers equation is the analysis of polymer measures or action minimizers
over long time intervals. A crucial point is the construction of global solutions using the infinite volume
polymer measures (in the positive viscosity case) or one-sided infinite action minimizers (for zero viscosity).
Another crucial point is to make sense of differences in action (resp.\ free energy) of two infinite one-sided  minimizers (resp.\ polymers). This is done rigorously through a limiting procedure 
leading to the notion of Busemann function.

The ergodic program for the Burgers equation has a long history. Before~\cite{kickb:bakhtin2016} and~\cite{2016arXiv160704864B}, the ideas around 1F1S for Burgers equation (and its generalizations) 
 with random forcing
were explored first in compact setting 
\cite{Sinai:MR1117645},  \cite{ekms:MR1779561}, 
\cite{Iturriaga:MR1952472}, \cite{Gomes-Iturriaga-Khanin:MR2241814},  \cite{Dirr-Souganidis:MR2191776},
  \cite{yb:MR2299503}, \cite{Debussche-Vovelle:MR3418750},
in quasi-compact setting in
\cite{Khanin-Hoang:MR1975784}, \cite{Suidan:MR2141893}, 
 \cite{Bakhtin-quasicompact}, 
 and, finally, in fully noncompact setting in~\cite{BCK:MR3110798},
where stationary Poissonian forcing was considered. 
 The work in~\cite{BCK:MR3110798} used ideas from \cite{Kesten:MR1221154}, \cite{HoNe3}, \cite{HoNe2}, \cite{HoNe},   \cite{Wu}, \cite{CaPi}, \cite{CaPi-ptrf}.
A similar approach to global solutions based on Busemann functions for lattice models was also developed in~\cite{Rassoul-Agha--Seppalainen--Yilmaz:2013arXiv1311.3016G},
\cite{Rassoul-Agha--Seppalainen--Yilmaz:2015arXiv151000859G}.

In \cite{Gomes-Iturriaga-Khanin:MR2241814}, the zero-viscosity limit for stationary solutions of the randomly forced Burgers equation (and other stochastic HJB equations) was obtained in the (compact) case of the circle or torus. In the present paper, we use the zero-temperature limit for infinite-volume directed polymers in order to obtain the zero-viscosity limit for stationary solutions of the  Burgers
equation with random kick forcing. Namely, we prove that as the viscosity vanishes the stationary solutions of the viscous Burgers equation converge to those of the inviscid one.
Of course, the PDE results of~\cite{Gomes-Iturriaga-Khanin:MR2241814}  can also be restated in the polymer language. 

We postpone the precise description of the mode of convergence of global solutions to the later sections of the paper. Here, we only want to make a comment that our results seem to be first ones on conservation of stationary solutions of a nonlinear stochastic PDE in noncompact setting under a transition to a limit. Among hard problems in this direction is the inviscid limit of the stochastic two-dimensional Navier--Stokes system  (SNS).  The compact case such as SNS on the 2D-torus is well understood, see 
\cite{E-Mattingly-Sinai:MR1868992}, \cite{Bricmont-et-al:MR1868991},
\cite{Kuksin-Shirikyan:MR1785459}, \cite{HM:MR2259251}, \cite{HM:MR2478676}, \cite{HM:MR2786645}. However, as the viscosity tends to zero, one needs to scale the forcing appropriately to obtain nontrivial behavior in the limit, as was realized in~\cite{Kuksin-2004:MR2070104},\cite{Kuksin-2007:MR2433681}, and~\cite{Kuksin-2008:MR2448135}. This contradicts the Kraichnan theory of 2D turbulence whose predictions can be interpreted as existence of a nontrivial inviscid limit
under viscosity-independent forcing. This discrepancy can be explained by finite size effects since
the inverse cascades of Kraichnan theory are impossible in a compact domain. 
It would be extremely interesting to see if 
 this contradiction gets resolved in noncompact setting. However, the only ergodic result for
Navier--Stokes system in the entire space known to us is~\cite{Bakhtin:MR2203796}, where 
under certain conditions on the decay of the noise at infinity,
 a unique invariant distribution
on the Le~Jan--Sznitman existence-uniqueness class is constructed for SNS in 
$\R^3$, and this class of solutions neither allows for spatial stationarity nor survives the inviscid limit. 
 
In the present paper, we show that in the Burgers turbulence which exhibits
a lot of contraction compared to the chaotic unstable behavior typical for the true turbulence,
the situation is quite nice and the expected inviscid limit holds. We also conjecture that similar results
hold for more general HJB equations with convex Hamiltonians and appropriately defined polymer models.

The rest of the paper is organized as follows. 
The setting and minimal background from~\cite{kickb:bakhtin2016} and~\cite{2016arXiv160704864B} that we  
we need to state our results are given in Sections~\ref{sec:Burgers} and~\ref{sec:polymers}: in Section~\ref{sec:Burgers} we introduce the relevant information on the Burgers equation, and in Section~\ref{sec:polymers}, we discuss polymers and action minimizers. We state our main results
in Section~\ref{sec:main-results}. In Sections~\ref{sec:properties-of-partition-function}, we remind
some basic useful results on partition functions.
The proofs of the main results are given in Sections~\ref{sec:concentration}---\ref{sec:zero-temperature-limit}.

{\bf Acknowledgments.} YB gratefully acknowledges partial support from NSF through grant
DMS-1460595.  

 \section{The Setting. Burgers equation}\label{sec:Burgers}

 \subsection{Forward and backward Burgers equation}
The one-dimensional Burgers equation describing evolution of a velocity field $u(t,x)$, where
$t\in\R$ and $x\in\R$ are time and space variables, is
 \begin{equation}
\label{eq:forward-Burgers}
 \partial_t u + u \partial_x u=\visc\partial_{xx} u + f.
\end{equation}
Here $f=f(t,x)$ is external forcing, and $\temp\ge0$ is the viscosity parameter. This equation, with
random kick-forcing $f$ was studied in~\cite{kickb:bakhtin2016} for $\temp=0$ and in \cite{2016arXiv160704864B}  for $\temp>0$. To solve the Cauchy problem for this equation up to time $t\in\R$, one needs to emit 
action minimizers and polymers from time $t$ into the past, and this is what was done 
in~\cite{kickb:bakhtin2016} and~\cite{2016arXiv160704864B}. However, it is slightly more natural to work with forward polymers and action minimizers, so in this paper, we change the direction of time 
and state our results for the following ``backward'' Burgers equation in 1D: 
\begin{equation}
\label{eq:Burgers}
- \partial_t u + u \partial_x u= \visc\partial_{xx} u + f.
\end{equation}
For this equation, instead of the initial value problem, it is the terminal value problem that is well-posed. It is natural to solve~\eqref{eq:Burgers} backward in time, and 
if $s>t$, then $u(t,\cdot)$ is uniquely defined by $u(s,\cdot)$ and the forcing $f$ between $t$ and $s$.
We stress that we change the time direction in the Burgers equation just for convenience. Restating any result obtained for equation \eqref{eq:forward-Burgers} in terms of equation~\eqref{eq:Burgers} and {\it vice versa} is trivial.

The Burgers equation is tightly connected to the  following (backward) Hamilton--Jacobi--Bellman (HJB) equation:
\begin{equation}
\label{eq:HJB}
- \partial_t U +  \frac{(\partial_x U)^2}{2}=\visc\partial_{xx} U + F.
\end{equation}
Namely, if $U$ is a solution of~\eqref{eq:HJB}, then $u=\partial_x U$ solves~\eqref{eq:Burgers} with $f=\partial_x F$.

The main model that we study in this paper is the Burgers equation with kick forcing of the following form: 
\begin{equation*}
f(t,x)=\sum_{n\in\Z} f_{n}(x)\delta_{n}(t). 
\end{equation*}

This means that the additive forcing is applied only at integer times. On each interval $(n, n+1]$
where~$n\in\Z$,
the velocity field evolves (from time $n+1$ to time~$n$) according to the unforced backward Burgers equation
\begin{equation}
\label{eq:unforced-Burgers}
-\partial_t u + u \partial_x u=\visc\partial_{xx} u,
\end{equation}
and at time~$n$,
the entire velocity profile~$u$ receives an instantaneous macroscopic increment equal to $f_n$:
\begin{equation}
\label{eq:effect-of-one-kick}
u(n-0,x)=u(n,x)+f_{n}(x),\quad x\in \R.
\end{equation}
We assume that the potential $F=F_{n,\omega}(x)$ of the forcing 
\[
f_n(x)=f_{n,\omega}(x)=\partial_x F_{n,\omega}(x), \quad n\in\Z,\ x\in\R,\ \omega\in \Omega, 
\]
is a stationary random field defined on some probability space $(\Omega,\Fc,\Pp)$.
We will describe all conditions that we impose on $F$ in section~\ref{sec:assumption-on-F}. At this point, we need only the following consequence of those
conditions: for every $\omega\in\Omega$
and every~$n\in\Z$, the function $F_{n,\omega}(\cdot)$ is measurable with respect to $\omega$, continuous with respect to~$x$,  and satisfies  
\begin{equation}
\label{eq:forcing-averages-to-0}
\lim_{|x|\to\infty} \frac{F_{n,\omega}(x)}{|x|}=0.
\end{equation}

\bigskip

Let us now explain how to solve the backward Burgers dynamics with kick forcing, thus introducing the
dynamics that we will study in this paper.
The inviscid case ($\temp = 0$) and the viscous case ($\temp > 0$) will be treated separately.
For the viscous case, the Burgers dynamics will be defined through  the Hopf--Cole
transformation and the Feynman--Kac formula.
For the inviscid case, we will use a variational characterization that can be seen as the 
limiting case of the positive viscosity formula.

For every $m,n\in\Z$ satisfying $m<n$, we denote the set of all paths~
\begin{equation*}
\gamma: [m,n]_{\Z} = \{m,m+1,\ldots,n\}\to\R
\end{equation*}
by $S_{*,*}^{m,n}$.
If in addition a point $x\in\R$ is given, then $S_{x,*}^{m,n}$ denotes the set of all such paths that
satisfy $\gamma_m=x$. If $n=\infty$, then we understand the above spaces as the spaces of one-sided semi-infinite paths.
If points $x,y\in\R$ are given, then $S_{x,y}^{m,n}$  denotes the set of all such paths that
satisfy $\gamma_m=x$ and $\gamma_{n}=y$. 

Let $m < n$.  Given a path $\gamma$ defined on $[m',n']_{\Z} \supset [m,n]_{\Z}$, its kinetic energy~$I^{m,n}(\gamma)$, potential energy~$H^{m,n}_{\omega}(\gamma)$ and total action~$A^{m,n}_{\omega}(\gamma)$ are given by 
\begin{equation}
  \label{eq:def-of-action}
  \begin{gathered}
  I^{m,n}(\gamma) = \frac{1}{2}\sum_{k=m+1}^n (\gamma_k - \gamma_{k-1})^2, \quad
  H^{m,n}_{\omega}(\gamma) = \sum_{k=m+1}^n F_{k,\omega}(\gamma_k),\\
  A^{m,n}_{\omega}(\gamma) = I^{m,n}(\gamma) + H^{m,n}_{\omega}(\gamma).    
  \end{gathered}
\end{equation}
Note the asymmetry in the definition of $H_\omega^{m,n}$: we have to include $k=n$, but exclude
$k=m$.
All our results
are proved for this choice of path energy, but it is straightforward to obtain their counterparts for the version
of energy where the~$k=n$ is excluded and~$k=m$ is included. 
For the inviscid case, we can now define the random backward evolution operator on potential by 
\begin{equation}
\label{eq:inviscid-evolution-on-potential}
[\Psi^{m,n}_{0, \omega} U](x) = \inf_{\gamma \in S^{m,n}_{x,*}} \{  U \big( \gamma_n \big) +
A^{m,n}(\gamma) \}, \quad x \in \R, \ m < n.
\end{equation}
For the viscous case, one can introduce the Hopf--Cole transformation~$\varphi$ by
\begin{equation}
\label{eq:Hopf-Cole-2}
 \varphi(t,x)=e^{- \frac{U(t,x)}{\temp}}.
\end{equation}
An application of the discrete Feynman--Kac formula will lead to the following backward evolution operator on $\varphi$:
\begin{equation}
\label{eq:F-K}
[ \Xi^{m,n}_{\temp, \omega} \varphi ](x)=\int_{\R} \hat{Z}^{m,n}_{x,y; \temp, \omega}\varphi(y)\, dx,\quad
x\in\R,\  m < n, 
\end{equation}
where
\begin{multline}
\label{eq:Z-hat}
\hat{Z}^{m,n}_{x,y; \temp, \omega} \\
=\int_{\R}\dots\int_\R \prod_{k=m+1}^n 
\left[g_{\temp}(x_k-x_{k-1})e^{-\frac{F_k(x_k)}{\temp}}\right] \delta_x(dx_m) dx_{m+1}\ldots dx_{n-1}\delta_y(dx_n)
\end{multline}
and $g_{\kappa}(x) = \frac{1}{\sqrt{2 \pi \kappa}} e^{-\frac{x^2 }{2\kappa}}$.
With the inverse of the Hopf--Cole transform~\eqref{eq:Hopf-Cole-2}, we can define evolution on potentials by
\[
\Phi_{\temp, \omega}^{m,n} U = -\temp\ln \Xi_{\temp, \omega}^{m,n} e^{- \frac{U}{\temp}}. 
\]

The space of velocity potentials that we will consider will be~$\HH$, the space of all locally
Lipschitz functions $W:\R\to\R$
 satisfying
\begin{align*}
 \liminf_{x\to\pm\infty}\frac{W(x)}{|x|}&>-\infty.
\end{align*}
We will also need a family of spaces
\[
\HH(v_-,v_+)=\left\{W\in\HH:\ \lim_{x\to \pm\infty} \frac{W(x)}{x}=v_\pm \right\},\quad v_-,v_+\in\R.
\] 

\begin{lemma} \label{lem:invariant_spaces}For every~$\temp \ge 0$ and any $\omega\in\Omega$,
for any $l,n,m\in\Z$ with $l<n<m$ and $W\in\HH$,
\begin{enumerate}
 \item\label{it:inv-of-HH} $\Phi^{n,m}_{\temp, \omega} W$ is well-defined and belongs to $\HH$;
 \item\label{it:inv-of-HH-vv} if $W\in\HH(v_-,v_+)$ for some  $v_-,v_+$, then $\Phi^{n,m}_{\temp, \omega} W\in\HH(v_-,v_+)$;
 \item\label{it:cocycle} (cocycle property) $\Phi^{l,m}_{\temp, \omega}W=\Phi^{l, n}_{\temp,\omega}\Phi^{n, m}_{\temp, \omega}W$.  
\end{enumerate}
\end{lemma}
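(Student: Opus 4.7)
The plan is to establish parts (1)--(3) in sequence, with the inviscid ($\temp = 0$) and viscous ($\temp > 0$) cases handled in parallel through the formulas \eqref{eq:inviscid-evolution-on-potential}--\eqref{eq:F-K}.

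First I would verify part (1): that $\Phi^{n,m}_{\temp,\omega} W$ is well-defined and lies in $\HH$. The assumption $W \in \HH$ gives a one-sided bound $W(y) \geq -C(1+|y|)$, and \eqref{eq:forcing-averages-to-0} gives, for any $\eps > 0$, the uniform estimate $F_{k,\omega}(y) \geq -\eps|y| - C_\eps$. In the inviscid case the objective $W(\gamma_m) + A^{n,m}(\gamma)$ is then bounded below by a strictly convex quadratic in the consecutive increments minus a linear penalty in $|\gamma_k|$, which yields a finite infimum; in the viscous case the same bound shows that each factor $e^{-F_k/\temp}$ is dominated by an exponential of linear growth, which is integrable against the Gaussian kernels in \eqref{eq:Z-hat}, so the integral converges absolutely. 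Local Lipschitz regularity of the output follows from a simple comparison: a near-optimal path starting at $x_1$ yields a competitor for $x_2$ by relocating only its initial vertex, so the action changes by a quantity linear in $|x_1 - x_2|$ whose slope is controlled by the (bounded) localization of the first intermediate vertex; the viscous analogue comes from differentiation under the integral. The same lower bound then gives $\Phi W(x) \geq -C'(1+|x|)$ and hence the required $\liminf > -\infty$.

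For part (2), I would reduce to the one-step case $m = n+1$ and iterate using (3). Consider $x \to +\infty$; in the inviscid one-step setting,
\begin{equation*}
[\Psi^{n,n+1} W](x) = \inf_y\bigl\{W(y) + (y-x)^2/2 + F_{n+1}(y)\bigr\}.
\end{equation*}
The upper bound $\limsup \Psi W(x)/x \leq v_+$ follows from the trial choice $y=x$, using $W(x)/x \to v_+$ and $F_{n+1}(x)/x = o(1)$. For the matching lower bound, the quadratic kinetic penalty combined with the sublinear lower bound on $W + F$ forces any near-optimal $y^*(x)$ to satisfy $|y^*(x) - x| \leq L\sqrt{x}$, so $y^*(x) \to +\infty$; the estimates $W(y) \geq (v_+-\eps)y - C_\eps$ and $F_{n+1}(y) \geq -\eps|y| - C_\eps$ valid on this range then give $\Psi W(x)/x \geq v_+ - O(\eps) + o(1)$, and sending $\eps \to 0$ finishes. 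The case $x \to -\infty$ is symmetric, using $v_-$. In the viscous case, the analogous conclusion follows from Laplace-type asymptotics: Jensen's inequality applied to \eqref{eq:F-K} yields $\Phi W(x) \leq \int g_\temp(y-x)(W(y) + F_{n+1}(y))\, dy$, which inherits the slope of $W$ at infinity, while the matching lower bound comes from localizing the Gaussian integral to where the integrand is not negligible.

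Part (3) is the cleanest. In the inviscid case, any $\gamma \in S^{l,m}_{x,*}$ decomposes at time $n$ into paths on $[l,n]$ and $[n,m]$ joined at $y = \gamma_n$, the action splits as $A^{l,m}(\gamma) = A^{l,n}(\gamma|_{[l,n]}) + A^{n,m}(\gamma|_{[n,m]})$, and minimizing first over the tail (with $y$ fixed) and then over the head gives $\Psi^{l,n}[\Psi^{n,m} W](x)$. In the viscous case the parallel identity is the discrete Chapman--Kolmogorov relation $\hat Z^{l,m}_{x,y;\temp,\omega} = \int_\R \hat Z^{l,n}_{x,z;\temp,\omega}\, \hat Z^{n,m}_{z,y;\temp,\omega}\, dz$, which via Fubini gives $\Xi^{l,m}_{\temp,\omega} = \Xi^{l,n}_{\temp,\omega}\Xi^{n,m}_{\temp,\omega}$, and the cocycle for $\Phi$ follows from its definition in terms of $\Xi$. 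The main technical obstacle is the lower bound in part (2): localizing the effective minimizer (or, in the viscous case, the effective mass of the integrand) uniformly as $x \to \pm\infty$ is the only step where a one-line one-step estimate does not suffice, and it is where the two-sided asymptotic slope hypothesis on $W$ plays an essential role.
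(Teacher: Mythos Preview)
The paper does not actually prove this lemma: it is stated in Section~\ref{sec:Burgers} as background, with the properties imported from the earlier papers \cite{kickb:bakhtin2016} and \cite{2016arXiv160704864B}. So there is no in-paper proof to compare against; your outline is being judged on its own merits, and against the standard arguments in those references.

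Your approach is correct and is exactly the natural one. A couple of small comments. First, in part~(1), the Jensen upper bound you invoke for the viscous case in part~(2) is not available for a general $W\in\HH$, since $\HH$ imposes only a one-sided linear lower bound on~$W$; the integral $\int g_\temp(y-x)W(y)\,dy$ could diverge. This does not affect your argument, because in part~(1) you only need the lower linear bound on $\Phi W$, and in part~(2) you are already in $\HH(v_-,v_+)$ where $W$ has two-sided linear growth and the integral is finite---but it is worth saying explicitly. Second, your localization claim $|y^*(x)-x|\le L\sqrt{x}$ and the parallel Gaussian localization for $\temp>0$ are the heart of the matter; the former follows cleanly from comparing the value at the trial point $y=x$ against the quadratic-minus-linear lower envelope, and the latter from splitting the integral into $|y-x|\le\delta x$ and its complement, using $(y-x)^2/4$ to absorb the linear error and the remaining $(y-x)^2/4$ for Gaussian decay. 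With those two estimates made precise, the rest is routine.
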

We can also introduce the Burgers dynamics on the space $\HH'$ of velocities $w$ such that
for some function $W\in\HH$ and Lebesgue almost every~$x$,  $w(x)=W'(x)=\partial_x W(x)$.
For all $v_-,v_+\in\R$,  $\HH'(v_-,v_+)$ is the space of velocity profile with well-defined one-sided averages $v_-$ and $v_+$, it consists of functions $w$ such that
the potential~$W$ defined by $W(x)=\int_0^x w(y)dy$ belongs to $\HH(v_-,v_+)$.

We will
write $ w_1=\Psi^{n_0,n_1}_{\temp, \omega} w_0$ if $w_0=W'_0$, $w_1=W'_1$, and
$W_1=\Phi^{n_0,n_1}_{\temp, \omega}
W_0$  for some $W_0,W_1\in\HH$. 

\subsection{Assumptions on the random forcing}
\label{sec:assumption-on-F}
 For simplicity, we will work on the canonical probability space $(\Omega_0,\Fc_0,\Pp_0)$ 
of realizations of the potential, although other more general settings are also possible. 
We assume that $\Omega_0$ is the space of continuous functions $F:\RZ\to\R$ equipped with $\Fc_0$, the completion of the Borel $\sigma$-algebra with respect to local uniform topology, and $\Pp_0$ is
a probability measure preserved by the group of shifts $(\theta^{n,x})_{(n,x)\in\ZR}$  defined by
\[
 (\theta^{n,x}F)_m(y) = F_{n+m}(x+y),\quad (n,x),(m,y)\in\ZR, 
\]
i.e., $(F_n(x))_{(n,x)\in\ZR}$ is a space-time stationary process. In this framework,
$F=F_\omega=\omega$, and we will use all these notations intermittently.

In addition to this, we introduce the following requirements: 
\begin{description}
\item[{(A1)}\label{item:stationary-in-space}] The flow $(\theta^{0,x})_{x\in\R}$ is ergodic. In particular,
for every $n\in\Z$,  $F_n(\cdot)$ is ergodic with respect to the spatial shifts.
\item[{(A2)}\label{item:indep-in-time}] The sequence of processes $\big( F_n(\cdot)\big)_{n \in \Z}$
  is \iid
\item[(A3)\label{item:smooth}] With probability 1, for all $n\in\Z$, $F_n(\cdot)\in C^1(\R)$. 
\item[(A4)\label{item:exponential-moment-with-viscosity}] For all  $(n,x) \in \Z\times\R$ and all
  $\beta \in \R_+$,
  \begin{equation*}
    \lambda(\beta) :=\E e^{- \beta F_n(x)} < \infty.
  \end{equation*}
\item[(A5)\label{item:exponential-moment-for-maximum}] 
There are~$\varphi$, $\eta>0$ such that for all $(n,j) \in \Z  \times \Z$,
  \begin{equation*}
e^{\varphi} =   \E e^{\eta F^{*}_{n,\omega}(j)} < \infty,
\end{equation*}
where
\begin{equation}
\label{eq:def-F-star}
F^{*}_{n,\omega}(j) = \sup \{ |F_{n,\omega}(x)| : x \in [j,j+1] \}.
\end{equation}
\end{description}

We will use these standing assumptions throughout the paper.

Stationarity and~\ref{item:exponential-moment-for-maximum} imply that~\eqref{eq:forcing-averages-to-0} holds
with probability~$1$ on $\Omega_0$. It will be convenient in this paper to work on a modified probability space
\begin{equation}
\label{eq:def-of-Omega}
\Omega=\left\{F\in\Omega_0: \lim_{|x|\to\infty}\frac{F_{n}(x)}{|x|}=0,\quad n\in\Z\right\}\in\Fc_0.
\end{equation}
of probability $1$ instead of $\Omega_0$. On this set, the Burgers evolution possesses 
some nice properties discussed in~\cite{kickb:bakhtin2016} and~\cite{2016arXiv160704864B}.
Moreover, $\Omega$ is invariant 
under space-time shifts~$\theta^{n,x}$ and under Galilean  space-time shear 
transformations~$L^{v},$ $v\in\R$,
defined by 
\begin{equation*}
(L^v F)_n (x)=F_n(x+vn),\quad (n,x)\in\ZR. 
\end{equation*}

We denote the restrictions of $\Fc_0$ and $\Pp_0$ onto $\Omega$ by $\Fc$ and $\Pp$.
From
now on we work with the probability space $(\Omega,\Fc,\Pp)$. Under this modification, all the
distributional properties are preserved.

\section{Directed polymers and minimizers}\label{sec:polymers}

Formulas \eqref{eq:inviscid-evolution-on-potential} and \eqref{eq:F-K}--\eqref{eq:Z-hat} show that
the problem of long-term properties of the Burgers equation with random forcing can be approached
through analysis of properties of either action minimizing paths (for the inviscid case) 
or Gibbs distributions on paths (for the viscous case) over long time intervals. This section
summarizes the results of~\cite{kickb:bakhtin2016} and~\cite{2016arXiv160704864B} for both settings.
We first describe properties of finite and one-sided infinite  minimizers in Section~\ref{sec:minimizers}, then the same is done for
polymers and their thermodynamic limits in Section\ref{sec:polymer-measures}, and finally we stress the connection to the global solutions
of the Burgers equation in Section~\ref{sec:connection-to-Burgers}.

\subsection{Minimizers}
\label{sec:minimizers}
For 
every $(m,x)\in\ZR$ and every $v\in\R$,  we denote
\[
S_{x,*}^{m,+\infty}(v)=\left\{\gamma\in S_{x,*}^{m,+\infty}:\ \lim_{n\to\infty}\frac{\gamma_n}{n}=v\right\}.
\]
If $\gamma \in S_{x,*}^{m,+\infty}(v)$, then we say that $\gamma$ has asymptotic slope $v$.

Let $A^{m,n}_{x,y}= A^{m,n}(x,y)$ denote the minimal action between~$(m,x)$ and~$(n,y)$, that is, 
\begin{equation}
  \label{eq:def-p2p-action}
A^{m,n}(x,y) = \min_{\gamma \in S^{m,n}_{x,y}} A^{m,n}(\gamma).
\end{equation}

A~path $\gamma \in S_{*,*}^{m,n}$ is called a (finite) minimizer if~$A^{m,n}(\gamma) = A^{m,n}(\gamma_m,
\gamma_n)$.  A~path~${\gamma \in S_{*,*}^{m,+\infty}}$ is called a semi-infinite minimizer (or
simply minimizer if it is clear from the context) if for any $n_2>n_1 > m$,
$\gamma^{n_1, n_2}$ is
a minimizer, where~$\gamma^{n_1,n_2}$ denotes the restriction of~$\gamma$ to~$[n_1, n_2]_{\Z}$.

The following theorem summarizes the results on semi-infinite minimizers established in~\cite{kickb:bakhtin2016}. These results were established in~\cite{kickb:bakhtin2016}
for a specific random potential of shot-noise type, but it is easy
to see that they hold true for any potential satisfying 
assumptions~\ref{item:stationary-in-space}--\ref{item:exponential-moment-for-maximum} under the additional requirement of finite dependence range. It is also natural to expect that they hold for
a much broader class of mixing potentials.

\begin{theorem}[Theorem 3.3, Lemma 9.3 in~\cite{kickb:bakhtin2016}]
\label{thm:zero-temperature-infinite-minimizers} 
Suppose that assumptions \ref{item:stationary-in-space}--\ref{item:exponential-moment-for-maximum}
are satisfied and $F$ has finite dependence range. Then for every $v \in \R$,
there is a full measure set $\Omega_{v,0}$ such that the following properties hold:
\begin{enumerate}[1.]
\item\label{item:existence-of-minimizer}
 For all $\omega \in
\Omega_{v,0}$, there is an at most countable set $\mathcal{N}=\Nc_{\omega} \in \ZR$ such that for all $(m,x)
\in \Z\times\R \setminus \Nc$, there is a unique minimizer $\gamma_x^{n,+\infty}(v) \in S^{m,+\infty}_{x,*}(v)$.
\item\label{item:def-busemann-function} (Busemann function) Let~$\omega \in \Omega_{v,0}$.
  For $(n_1,x_1), (n_2,x_2) \in \Z\times\R$, there is sequence $N_k \uparrow +\infty$ such that the
  limit 
\begin{equation}\label{eq:def-busemann}
B_v \big(  (n_1,x_1), (n_2,x_2) \big) = \lim_{k \to \infty} A^{n_1, N_k} \big(
\gamma_{x_1}^{n_1}(v) \big) - A^{n_2,N_k} \big(  \gamma_{x_2}^{n_2}(v) \big)
\end{equation}
exists.  Here, if the semi-infinite minimizer is not unique at~$(n_i,x_i)$,
then~$\gamma_{x_i}^{n_i}(v)$ can be any minimizer in $S^{n_i,\infty}_{x_i, *}(v)$, $i
= 1,2$.
Moreover, if the limit in~(\ref{eq:def-busemann}) exists for some other sequence $(N'_k)$, then it is
independent of the choice of~$(N'_k)$.
The function $B_v$ has the property that for any $(n_i, x_i) \in \ZR$,
\begin{gather}\label{eq:cocycle-property-for-zero-temp-busemann}
  B_v \big(  (n_1,x_1), (n_2,x_2) \big) + B_v \big(  (n_2, x_2), (n_3, x_3) \big)
  = B_v \big(  (n_1,x_1), (n_3, x_3) \big), \\ 
\nonumber  B_v \big(  (n_1,x_1), (n_2, x_2) \big) = -B_v \big(  (n_2,x_2), (n_1,x_1) \big).
\end{gather}
\item\label{item:relation-to-inviscid-burgers} 
  The function $U_{v; 0}(n, \cdot) = -  B_v \big(  (n,\cdot), (n,0) \big)$ is Lipschitz, and it is differentiable
  at all~$(n,x) \not\in \Nc$.  The derivative is given by 
\begin{equation}\label{derivative-of-U-n-zero-temperature}
u_{v; 0}(n, x):= \frac{d}{dx} U_{v; 0}(x) =x  - \big( \gamma_x^{n,+\infty}(v)  \big)_{n+1}.
\end{equation}
\item\label{item:Bussmann-function-solve-variational-problem}(Solution to inviscid Burgers and HJB equations)  The
  function $B_v$ solves the following variational problem: for $m > n$ and fixed $(n_0, x_0) \in \ZR$,
\begin{equation}\label{eq:busemann-function-solving-invisid-burgers}
  B_v \big(  (n,x), (n_0, x_0) \big)
  = \min_{ y \in \R} \{  B_v \big(  (m,y), (n_0, x_0) \big)  + A^{n,m}(x,y) \}.
\end{equation}
In particular, the function $u_{v;0}$ introduced in~\eqref{derivative-of-U-n-zero-temperature} solves the inviscid
Burgers equation.
\end{enumerate}
\end{theorem}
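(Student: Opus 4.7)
The overall strategy is to build semi-infinite minimizers as subsequential limits of point-to-point minimizers whose terminal points are shepherded to have slope $v$, and then to extract the Busemann function from differences of finite actions along minimizing paths. The key analytic input is the quadratic (hence uniformly curved) shape function $\bar A(v)=v^{2}/2 + c$ that arises for Gaussian kinetic energy, which forces concentration of endpoints of long point-to-point minimizers. Galilean shear invariance of $\Omega$ under $L^{v}$ reduces many statements to the case $v=0$.

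For part~\ref{item:existence-of-minimizer}, I would fix $(m,x)$ and consider the finite minimizers $\gamma^{m,N}_{x,y_N}$ for any choice of $y_N$ with $y_N/N\to v$. Using the shape theorem together with uniqueness of the velocity that minimizes $\frac{1}{2}(w-v)^{2}+\bar A(w)-vw$, one shows that the endpoint at any fixed intermediate time $n\in(m,N)$ is tight as $N\to\infty$, so a diagonal extraction yields a semi-infinite minimizer with slope $v$. Uniqueness off a countable exceptional set $\Nc$ is the most delicate step: if two distinct minimizers started at the same $(m,x)$, by planarity they would have to separate and could not both have the same asymptotic slope once one invokes the strict convexity of the shape function and a straightening/shift argument, together with ergodicity of $\theta^{0,x}$ from \ref{item:stationary-in-space}. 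The exceptional set $\Nc$ is at most countable because non-uniqueness at $(m,x)$ forces a jump in the map $x\mapsto (\gamma^{m,+\infty}_{x}(v))_{m+1}$, which is monotone by a standard non-crossing lemma for minimizers.

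For parts~\ref{item:def-busemann-function} and~\ref{item:relation-to-inviscid-burgers}, the Busemann differences $A^{n_1,N}(\gamma^{n_1}_{x_1}(v))-A^{n_2,N}(\gamma^{n_2}_{x_2}(v))$ are bounded uniformly in $N$ by comparison with any admissible path that couples the two minimizers after a finite time (made possible by the uniqueness and coalescence of semi-infinite minimizers with a common slope, itself a consequence of part~\ref{item:existence-of-minimizer} plus non-crossing). Hence a subsequential limit exists, giving $B_v$; the cocycle and antisymmetry relations in~\eqref{eq:cocycle-property-for-zero-temp-busemann} are immediate from the definition along the same subsequence. For~\ref{item:relation-to-inviscid-burgers}, one fixes the reference point $(n,0)$ and uses the one-step Bellman identity
\begin{equation*}
A^{n,N}(\gamma^{n}_{x}(v))-A^{n,N}(\gamma^{n}_{0}(v))
= \tfrac12\bigl(x-(\gamma^{n,+\infty}_{x}(v))_{n+1}\bigr)^{2}-\tfrac12\bigl(0-(\gamma^{n,+\infty}_{0}(v))_{n+1}\bigr)^{2}+\text{remainder},
\end{equation*}
together with the fact that $x\mapsto (\gamma^{n,+\infty}_{x}(v))_{n+1}$ is monotone and hence differentiable Lebesgue-a.e.; computing the derivative in $x$ at points of uniqueness yields~\eqref{derivative-of-U-n-zero-temperature}, and Lipschitzness follows from uniform bounds on this first-step map.

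Finally, for part~\ref{item:Bussmann-function-solve-variational-problem} I would use dynamic programming: any semi-infinite minimizer from $(n,x)$ passes through $(m,y)$ for some $y$, after which its tail is a semi-infinite minimizer from $(m,y)$ with the same slope, so
\begin{equation*}
B_v((n,x),(n_0,x_0))=\inf_y\bigl\{A^{n,m}(x,y)+B_v((m,y),(n_0,x_0))\bigr\},
\end{equation*}
with the infimum attained at $y=\gamma^{n,+\infty}_{x}(v)_m$. The identification of $u_{v;0}=\partial_x U_{v;0}$ with a solution of the inviscid Burgers equation then follows by differentiating this Hopf--Lax identity in $x$ and invoking the standard equivalence between HJB and Burgers. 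The main obstacle in this whole program is establishing uniqueness and coalescence of semi-infinite minimizers in part~\ref{item:existence-of-minimizer}, which underpins both the well-definedness of the Busemann limit and the monotone structure needed for the derivative formula.
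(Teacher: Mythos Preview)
This theorem is not proved in the present paper. It is stated as a summary of results from~\cite{kickb:bakhtin2016} (Theorem~3.3 and Lemma~9.3 there), as the text immediately preceding the statement makes explicit. There is therefore no proof in this paper against which to compare your proposal.

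That said, your sketch is broadly aligned with the strategy of~\cite{kickb:bakhtin2016} and the related works~\cite{BCK:MR3110798},~\cite{HoNe}: existence via compactness of finite point-to-point minimizers and the quadratic shape function, countability of the exceptional set via monotonicity of the first-step map, Busemann functions as limits of action differences, and the variational identity via dynamic programming. One point needs correction. In your discussion of part~\ref{item:existence-of-minimizer} you first argue that two distinct semi-infinite minimizers from the same $(m,x)$ ``could not both have the same asymptotic slope'' by planarity and convexity of the shape function. Taken at face value this would yield uniqueness at \emph{every} point, which is false: non-uniqueness genuinely occurs on~$\Nc$. Your second argument (non-uniqueness forces a jump of the monotone map $x\mapsto(\gamma^{m,+\infty}_x(v))_{m+1}$) is the correct mechanism for countability of~$\Nc$; the first sentence should simply be dropped. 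You also underplay the role of \emph{coalescence} of any two semi-infinite minimizers with the same slope. In~\cite{kickb:bakhtin2016} this is the key structural fact that makes the Busemann limit in~\eqref{eq:def-busemann} well-defined and independent of the choice of minimizer at exceptional points, and it is precisely where the curvature of the shape function and the finite-dependence-range hypothesis do their real work; your proposal invokes coalescence but does not indicate how it is obtained.
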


\subsection{Polymer measures}
\label{sec:polymer-measures}
Let $\temp > 0$. In the context of polymer measures, this parameter plays the role of temperature.
For $m,n\in\Z$ with~$m < n$ and~$x,y\in\R$, 
the point-to-point polymer measure (at temperature~$\temp$)~$\mu^{m,n}_{x,y; \temp, \omega}$ is a
probability measure on $S^{m,n}_{x,y}$ that has density
\begin{align*}
\mu^{m,n}_{x,y; \temp, \omega}(x_m,\ldots,x_n)
&=\frac{\prod_{k=m+1}^n \left[g_{\temp}(x_k-x_{k-1})e^{-\frac{F_k(x_k)}{\temp}}\right]
}{\hat{Z}^{m,n}_{x,y; \temp, \omega}},
\end{align*}
with respect to~${\delta_x\times \Leb^{n-m-1}\times \delta_y}$, where~$\hat{Z}^{m,n}_{x,y;
  \temp,\omega}$ is defined in~(\ref{eq:Z-hat}).

Let us introduce
\begin{align}\notag
    Z^{m,n}_{x,y; \temp, \omega} &= \big( 2\pi \temp \big)^{n/2} \hat{Z}^{m,n}_{x,y; \temp, \omega}
    =\int_{\gamma \in S^{m,n}_{x,y}} e^{- \be   A_{\omega}^{m,n}(\gamma) }  \, d \gamma \\
    & =  \int e^{- \be \sum\limits_{k = m+1}^{n} \big[ \frac{1}{2}(x_k - x_{k-1})^2 + F_k(x_k)  \big]} \,
    \delta_x(dx_m) dx_{m+1} ... dx_{n-1} \delta(dx_n),   
    \label{eq:Z}
\end{align}
where $A^{m,n}$ is defined in~(\ref{eq:def-of-action}).
The polymer density can also be expressed as
\[
 \mu^{m,n}_{x,y; \temp,\omega}(\gamma_m,\ldots,\gamma_n)= \frac{
 e^{- \be A_\omega^{m,n}(\gamma)}}{Z^{m,n}_{x,y; \temp, \omega}}.
\]
We often omit the~$\omega$
argument in all the notations used above. We also often write~$Z^{m,n}_{\temp}(x,y)$ for $Z^{m,n}_{x,y; \temp}$.

We call a measure $\mu$ on  $S^{m,n}_{x,*}$ a polymer measure (at temperature $\temp$) if there is a probability measure~$\nu$ on~$\R$
such that $\mu=\mu_{x,\nu; \temp}^{m,n}$, where 
\[
\mu_{x,\nu; \temp}^{m,n}=\int_{\R}\mu_{x,y; \temp}^{m,n}\nu(dy). 
\] 
We call $\nu$ the terminal measure for~$\mu=\mu_{x,\nu; \temp}^{m,n}$. It is also natural to call $\mu$ a point-to-measure polymer
measure associated to~$x$ and~$\nu$. 

A measure $\mu$ on $S_{x}^{m,+\infty}$ is called an infinite volume polymer measure if for any $n\ge m$
the projection of $\mu$ on $S^{m,n}_{x,*}$ is a polymer measure. This condition is equivalent to the Dobrushin--Lanford--Ruelle (DLR) condition
on the measure $\mu$.

We say that the strong law of large numbers (SLLN) with slope $v\in\R$ holds for a measure $\mu$ on $S_{x,*}^{m,+\infty}$ if $\mu(S_{x,*}^{m,+\infty}(v))=1$. 

We say that LLN with slope $v\in\R$ holds for a sequence of Borel measures $(\nu_n)$ on $\R$ if
for all $\delta>0$,
\[
 \lim_{n\to\infty} \nu_n([(v-\delta)n,(v+\delta) n])=1.
\]
Finally, for any $(m,x)\in\ZR$, we say that a measure $\mu$ on $S_{x,*}^{m,+\infty}$ satisfies LLN with slope $v$ if the sequence of its marginals 
$\nu_k(\cdot)=\mu\{\gamma:\ \gamma_k\in\cdot\}$ does.

We denote by $\Pc_{x;\temp}^{m,+\infty}(v)$  the set of all polymer measures at temperature~$\temp$
on $S_{x,*}^{m,+\infty}$ satisfying SLLN with slope $v$. The set of all polymer measures at temperature~$\temp$ on $S_{x,*}^{m,+\infty}$ satisfying LLN with slope $v$ 
is denoted by $\widetilde \Pc_{x;\temp}^{m,+\infty}(v)$. These sets are random since they depend on the realization of the environment, but we suppress the dependence on $\omega\in\Omega$ in this notation.

The following theorem summarizes the results established in~\cite{2016arXiv160704864B} on the
infinite polymer measure with given asymptotic slope.
\begin{theorem}[Theorems 4.2, 4.3, 11.2 in~\cite{2016arXiv160704864B}]\label{thm:positive-temperature-IVPM}
Suppose that assumptions~\ref{item:stationary-in-space}--\ref{item:exponential-moment-for-maximum}
are satisfied. Then, for each $v\in\R$ and~$\temp > 0$, there is a full measure set $\Omega_{v,\temp}\in\Fc$ such that
\begin{enumerate}[1.]
 \item \label{it:existence-uniqueness-of-infinite-volume} For all $\omega\in\Omega_{v,\temp}$ and
   all $(m,x)\in\ZR$, there is a unique polymer measure $\mu_{x; v, \temp}^{m,+\infty}$ such
   that 
   \[\Pc_{x;\temp}^{m,+\infty}(v)=\widetilde\Pc_{x;\temp}^{m,+\infty}(v)=\{\mu_{x; v, \temp}^{m,+\infty}\}.\]
    \item \label{it:thermodynamic-limit} For all $\omega\in\Omega_{v,\temp}$, all $(m,x)\in\Z\times\R$ and 
 for every sequence of measures $(\nu_n)$ satisfying LLN with slope $v$, 
 finite-dimensional distributions of~$\mu_{x,\nu_n; \temp}^{m,n}$ converge to $\mu_{x; v,\temp}^{m,+\infty}$ in
 total variation. 
\item\label{item:ration-of-partition-function}  For all $\omega\in\Omega_{v,\temp}$, all
$(n_1,x_1),(n_2,x_2) \in \ZR$
and for every sequence~$(y_N)$ with~$\lim\limits_{N \to  \infty} y_N/N = v$, we have
\begin{equation*}
\lim\limits_{N\to \infty}  \frac{Z_{x_1, y_N; \temp}^{n_1, N}}{Z_{ x_2, y_N; \temp}^{n_2, N}} = G,
\end{equation*}
where $G=G_{v, \temp} \big(  (n_1,x_1) ,( n_2,x_1) \big) \in(0,\infty)$ does not depend on~$(y_N)$.
Moreover, the function $G$ has the property that for any $(n_i, x_i) \in \ZR$, 
\begin{gather}\label{eq:cocycle-property-for-positive-temp-busemann}
  G_{v,\temp} \big(  (n_1,x_1), (n_2,x_2) \big)  G_{v,\temp} \big(  (n_2,x_2) , (n_3, x_3) \big)
  = G_{v,\temp} \big(  (n_1, x_1), (n_3,x_3) \big), \\ \nonumber
  G_{v,\temp} \big(  (n_1,x_1), (n_2, x_2) \big) = \Big[  G_{v,\temp} \big(  (n_2,x_2), (n_1,x_1) \big) \Big]^{-1}.
\end{gather}

\item\label{item:density-via-partition-function-ratio} 
For all $(m,x) \in \ZR$, the finite-dimensional distributions of $\mu_{x; v, \temp}^{m,+\infty}$ are absolutely continuous.  The
density of its marginal is given by
\begin{equation}
  \label{eq:expression-of-polymer-measure-density}
  \mu_{x;v,  \temp}^{m,+\infty}  \pi_{n}^{-1}   (dy) = Z^{m,n}_{x,y; \temp} G_{v; \temp} \bigl(
  (n,y), (m,x) \bigr),  \quad n > m,
\end{equation}
where $\pi_n$ is the projection of a path $\gamma$ onto its $n$-th coordinate $\gamma_n$.
\item\label{item:relation-to-viscous-burgers} Let $U_{v; \temp}(n, \cdot) = -  \temp \ln G_{v,\temp} \big(  (n,\cdot), (n,0)
  \big)$, then 
\begin{equation}\label{eq:derivative-of-U-n-positive-temperature}
  u_{v; \temp}(n, x) :=   \frac{d}{dx} U_{v;\temp}(n, x) =   \int(x-y) \mu_{x; v, \temp}^{n,+\infty} \pi_{n+1}^{-1}
  (dy), \quad (n,x) \in \ZR.
\end{equation} 
\item\label{item:partition-function-ratios-solve-burgers}(Solutions to viscous Burgers, HJB, and heat equations) The function $G_{v; \temp}$ satisfies the
  following relation: for $m > n$ and fixed $(n_0,x_0) \in \ZR$, 
\begin{equation}\label{eq:partition-function-ration-solves-burges}
  G_{v; \temp} \big(  (n,x), (n_0,x_0) \big)
  = \int_{\R} Z^{n,m}_{x,y;\temp} G_{v;\temp} \big(  (m,y), (n_0,x_0) \big).
\end{equation}
In particular, $ u_{v; \temp}(n,x)$ defined in~(\ref{eq:derivative-of-U-n-positive-temperature}) solves
Burgers equation with viscosity~$\temp$.
\end{enumerate}
\end{theorem}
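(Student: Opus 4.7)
The plan is to follow the approach pioneered for Busemann functions in first/last passage percolation, adapted to the positive-temperature polymer setting on $\R$ with Gaussian free measure. Roughly: first construct the ratio object $G_{v,\temp}$ directly, then \emph{define} the candidate infinite-volume polymer measure by the formula~\eqref{eq:expression-of-polymer-measure-density}, verify it is a probability measure satisfying DLR and SLLN with slope $v$, and then extract uniqueness from thermodynamic convergence.

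The first and central step is item~\ref{item:ration-of-partition-function}: showing $Z^{n_1,N}_{x_1,y_N;\temp}/Z^{n_2,N}_{x_2,y_N;\temp}$ converges, uniformly over sequences with $y_N/N\to v$. I would use two ingredients specific to this Gaussian polymer model. First, the annealed free energy density $\alpha_\temp(v)$ has an explicit quadratic-in-$v$ form (with uniform curvature) coming from the Gaussian free measure plus the single-site potential; this furnishes the deterministic tilt one must subtract. Second, concentration estimates for $\log Z^{m,n}_{x,y;\temp}$ around its mean — which under assumptions~\ref{item:exponential-moment-with-viscosity}, \ref{item:exponential-moment-for-maximum} can be obtained by martingale/Efron–Stein methods — control the fluctuations on the diffusive scale. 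Combining these with a sub-additive argument along a fixed slope-$v$ direction gives existence of the ratio limit; independence of the particular sequence $(y_N)$ is then obtained from quadratic curvature plus a standard sandwich between slightly steeper/flatter sequences. The cocycle relations~\eqref{eq:cocycle-property-for-positive-temp-busemann} are automatic from the definition once the limit exists.

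With $G_{v,\temp}$ in hand, I would define $\mu_{x;v,\temp}^{m,+\infty}$ through~\eqref{eq:expression-of-polymer-measure-density}. Total mass $=1$ reduces to relation~\eqref{eq:partition-function-ration-solves-burges}, which itself follows by writing $Z^{n_1,N}_{x_1,y_N;\temp}$ as a convolution over the intermediate time $m$ and dividing through by $Z^{n_2,N}_{x_2,y_N;\temp}$ before passing to the limit. Consistency across different $n$ (hence the DLR/polymer property) is then the cocycle identity for $G_{v,\temp}$. That this measure satisfies SLLN with slope $v$ (item~\ref{it:existence-uniqueness-of-infinite-volume}) comes from inserting~\eqref{eq:expression-of-polymer-measure-density} into Markov-type tail bounds and using curvature: paths deviating from slope $v$ by more than $\delta$ pay an $n\delta^2$ cost in the quadratic shape function that beats the stochastic fluctuations, so Borel–Cantelli gives $\gamma_n/n\to v$ a.s. For the thermodynamic limit in item~\ref{it:thermodynamic-limit}, the finite-dimensional density of $\mu^{m,n}_{x,\nu_n;\temp}$ is an explicit ratio of partition functions integrated against $\nu_n$; the LLN on $\nu_n$ concentrates the integral on the good set where the ratio converges to $G_{v,\temp}$, and dominated convergence upgrades this to total variation on cylinders. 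Uniqueness inside $\widetilde\Pc^{m,+\infty}_{x;\temp}(v)$ is then immediate: any such measure is, by DLR, a point-to-measure polymer with some $\nu_n$ satisfying LLN with slope $v$, hence equals the constructed one.

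Items~\ref{item:density-via-partition-function-ratio}–\ref{item:partition-function-ratios-solve-burgers} are then downstream. Differentiability and the velocity formula~\eqref{eq:derivative-of-U-n-positive-temperature} follow from differentiating $U_{v;\temp}=-\temp\log G_{v,\temp}$ in its spatial argument, using that $G_{v,\temp}((n,\cdot),(m,x))$ is a limit of ratios of partition functions which, through the Gaussian kernel $g_\temp$, are smooth in the endpoint with derivatives computable by differentiation under the integral; the resulting formula reorganizes into the conditional expectation of $x-\gamma_{n+1}$ under $\mu^{n,+\infty}_{x;v,\temp}$. The viscous Burgers equation in item~\ref{item:partition-function-ratios-solve-burgers} is then just the differentiated form of~\eqref{eq:partition-function-ration-solves-burges}. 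The main obstacle throughout is the construction in item~\ref{item:ration-of-partition-function}: without the quadratic shape function and the concentration of $\log Z$, one would lack both the pinning of the correct direction and the control of fluctuations needed to identify the limit, so this step is where essentially all of the model-specific input is consumed.
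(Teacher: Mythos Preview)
This theorem is not proved in the present paper. It is stated as a summary of results established in the earlier companion paper~\cite{2016arXiv160704864B} (Theorems~4.2, 4.3, and~11.2 there), and the current paper simply quotes those results in order to use them. There is therefore no proof here to compare your proposal against.

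Your outline is a plausible high-level description of the strategy one expects for such results, and the ingredients you name---the explicitly quadratic shape function $\alpha_\temp(v)=\alpha_{0;\temp}-v^2/2$, concentration estimates for $\log Z$, and the resulting straightness of polymers---are indeed the ones invoked in~\cite{2016arXiv160704864B}. One point where your sketch is too optimistic: you write that existence of the ratio limit in item~\ref{item:ration-of-partition-function} follows from ``a sub-additive argument along a fixed slope-$v$ direction.'' Sub-additivity gives the shape function, but it does not by itself give convergence of the \emph{ratio} $Z^{n_1,N}_{x_1,y_N;\temp}/Z^{n_2,N}_{x_2,y_N;\temp}$; for that, the cited paper uses a monotonicity argument specific to one spatial dimension (ordering of polymer marginals in the endpoint) together with the straightness estimates. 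If you intend to reconstruct the proof, that is the step requiring the most care.
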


\subsection{Connections to global solutions of Burgers equation}
\label{sec:connection-to-Burgers}
We say that ${u(n,x)=u_\omega(n,x)}$, $(n,x)\in\Z\times\R$ is a global solution for the
Burgers equation with viscosity $\temp$ if there is a set $\Omega'\in\Fc$ with $\Pp(\Omega')=1$ such that for all
$\omega\in\Omega'$, all~$m$ and $n$ with $m<n$, we have $\Psi^{m,n}_{\temp, \omega} u_\omega(n,\cdot)= u_\omega(m,\cdot)$.

 We recall the full measure sets $\Omega_{v, \temp}$ and the
functions $u_{v;\temp }$, $v \in\R$,
$\temp \ge 0$ defined in Theorem~\ref{thm:zero-temperature-infinite-minimizers}
and~\ref{thm:positive-temperature-IVPM}.
As we see in the previous two sections, the relations~(\ref{eq:busemann-function-solving-invisid-burgers})
and~(\ref{eq:partition-function-ration-solves-burges}), together
with~(\ref{derivative-of-U-n-zero-temperature})
and~(\ref{eq:derivative-of-U-n-positive-temperature}) where $u_{v; \temp}$ are defined, show that for each $\temp \ge 0$, $u_{v; \temp}$ is a global
solution for the Burgers with viscosity $\temp$.  In fact, they are the only ones in a certain sense,
as the following theorem states.
\begin{theorem}[\cite{kickb:bakhtin2016}, \cite{2016arXiv160704864B}]
\label{thm:global-solution}
Let $\temp \ge 0$.
For every $v \in \R$, the function $u_{v;\temp}$ defined on the full measure set~$\Omega_{v,
  \temp}$ is a unique global stationary solution in $\HH'(v,v)$ for the Burgers equation with viscosity $\temp$.
\end{theorem}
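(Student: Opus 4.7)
The plan is to split the proof into existence plus membership/stationarity on one side and uniqueness on the other, and to reduce each to a direct consequence of Theorems \ref{thm:zero-temperature-infinite-minimizers} and \ref{thm:positive-temperature-IVPM} together with the Hopf--Cole formalism already set up in Section~\ref{sec:Burgers}. In effect, the theorem is a packaging statement that says ``the object built from Busemann functions (resp.\ partition function ratios) is the only attractor compatible with slope $v$.''

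First I would verify that $u_{v;\temp}$ is a global solution. For $\temp=0$, apply $\partial_x$ to both sides of \eqref{eq:busemann-function-solving-invisid-burgers} after taking $(n_0,x_0)=(n,0)$ and fixing a terminal time $m>n$; using \eqref{derivative-of-U-n-zero-temperature} this is exactly the statement $U_{v;0}(n,\cdot)=\Phi^{n,m}_{0,\omega}U_{v;0}(m,\cdot)$ on the full-measure set $\Omega_{v,0}$, so $u_{v;0}(n,\cdot)=\Psi^{n,m}_{0,\omega}u_{v;0}(m,\cdot)$. For $\temp>0$, \eqref{eq:partition-function-ration-solves-burges} says that the Hopf--Cole variable $\varphi_{v;\temp}(n,x):=G_{v;\temp}((n,x),(n_0,x_0))$ satisfies $\varphi_{v;\temp}(n,\cdot)=\Xi^{n,m}_{\temp,\omega}\varphi_{v;\temp}(m,\cdot)$, which under $U_{v;\temp}=-\temp\ln\varphi_{v;\temp}$ and differentiation in $x$ becomes exactly $u_{v;\temp}(n,\cdot)=\Psi^{n,m}_{\temp,\omega}u_{v;\temp}(m,\cdot)$, again on $\Omega_{v,\temp}$. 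Membership of $U_{v;\temp}(n,\cdot)$ in $\HH(v,v)$ (and hence $u_{v;\temp}(n,\cdot)\in\HH'(v,v)$) follows from Lemma~\ref{lem:invariant_spaces}\ref{it:inv-of-HH-vv}, once the slope $v$ at $\pm\infty$ is identified at one time, which is the purpose of the SLLN/LLN clause in Theorem~\ref{thm:positive-temperature-IVPM} and the asymptotic slope $v$ of the one-sided minimizers in Theorem~\ref{thm:zero-temperature-infinite-minimizers}. Stationarity is inherited from the equivariance of the construction under the spatial shifts $\theta^{0,x}$: Busemann functions and partition function ratios are defined by limits of shift-covariant quantities, so $u_{v;\temp}(n,\cdot+x)$ on $\omega$ equals $u_{v;\temp}(n,\cdot)$ on $\theta^{0,x}\omega$, which together with measure preservation yields stationarity in $x$ (and similarly in $n$).

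For uniqueness, let $\tilde u$ be another global stationary solution in $\HH'(v,v)$ and let $\tilde U$ be any potential with $\partial_x\tilde U=\tilde u$. Fix $(m,x)\in\ZR$. In the inviscid case, the cocycle representation yields
\begin{equation*}
\tilde U(m,x)=\inf_{\gamma\in S^{m,n}_{x,*}}\bigl\{\tilde U(n,\gamma_n)+A^{m,n}(\gamma)\bigr\}
\end{equation*}
for every $n>m$. Since $\tilde U(n,y)/|y|\to v\cdot\mathrm{sgn}(y)$ and the kinetic term is quadratic, the infimum is attained at some $y_n$ with $y_n/n\to v$; by part~\ref{item:existence-of-minimizer} of Theorem~\ref{thm:zero-temperature-infinite-minimizers} the corresponding finite minimizers converge as $n\to\infty$ to the unique one-sided minimizer $\gamma_x^{m,+\infty}(v)$, and differentiating in $x$ gives $\tilde u(m,x)=u_{v;0}(m,x)$ at every point of differentiability by \eqref{derivative-of-U-n-zero-temperature}. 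In the viscous case, the Hopf--Cole transform $\tilde\varphi=e^{-\tilde U/\temp}$ satisfies $\tilde\varphi(m,\cdot)=\Xi^{m,n}_{\temp,\omega}\tilde\varphi(n,\cdot)$, so the normalized kernel $Z^{m,n}_{x,y;\temp}\tilde\varphi(n,y)\,dy/\tilde\varphi(m,x)$ is a point-to-measure polymer measure whose terminal distribution concentrates on slope $v$ as $n\to\infty$ (again because $\tilde U(n,\cdot)$ has slopes $\pm v$ at $\pm\infty$ and the Gaussian tilt is quadratic). Part~\ref{it:thermodynamic-limit} of Theorem~\ref{thm:positive-temperature-IVPM} identifies this limit with the unique infinite-volume polymer measure $\mu^{m,+\infty}_{x;v,\temp}$; taking $\partial_x$ and using \eqref{eq:derivative-of-U-n-positive-temperature} gives $\tilde u(m,x)=u_{v;\temp}(m,x)$.

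The main obstacle is the concentration step in the uniqueness argument, namely that both the inviscid infimum and the viscous Feynman--Kac integral are effectively dominated by paths of asymptotic slope $v$. The inviscid case reduces to a convex-coercivity estimate using $\tilde U\in\HH(v,v)$ and the quadratic $I^{m,n}$, which forces any near-minimizer $y_n$ to satisfy $y_n/n\to v$. The viscous case is the genuine work: one must show that the random terminal measures on $y$ satisfy LLN with slope $v$ uniformly in $\omega$ on a full-measure set, and this is where the sharp curvature of the shape function and the exponential moment assumption \ref{item:exponential-moment-for-maximum} enter, via estimates on partition functions established in Section~\ref{sec:properties-of-partition-function}. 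Once this localization is secured, the thermodynamic limit machinery of~\cite{2016arXiv160704864B} closes the argument.
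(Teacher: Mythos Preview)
The paper does not prove this theorem: it is stated as a citation to~\cite{kickb:bakhtin2016} and~\cite{2016arXiv160704864B}, and Section~\ref{sec:connection-to-Burgers} only records the observation that the global-solution property of~$u_{v;\temp}$ is immediate from~\eqref{eq:busemann-function-solving-invisid-burgers}, \eqref{eq:partition-function-ration-solves-burges} together with~\eqref{derivative-of-U-n-zero-temperature}, \eqref{eq:derivative-of-U-n-positive-temperature}. Everything else---membership in~$\HH'(v,v)$, stationarity, and especially uniqueness---is imported wholesale from those references, so there is no in-paper proof to compare against.

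Your sketch is a reasonable reconstruction of the architecture of the argument in the cited papers, and you correctly isolate the hard step: showing that for an arbitrary global solution~$\tilde u\in\HH'(v,v)$ the induced terminal measures (or near-minimizers) satisfy LLN with slope~$v$. Two comments on the sketch itself. First, invoking Lemma~\ref{lem:invariant_spaces}\ref{it:inv-of-HH-vv} for membership in~$\HH(v,v)$ is circular as written: that lemma preserves the space under evolution but does not place~$U_{v;\temp}(n,\cdot)$ in it to begin with; the actual argument in the cited papers derives the asymptotic slope of the Busemann function directly from ergodicity and the shape function, not from the polymer LLN. Second, in the uniqueness step the passage ``since~$\tilde U(n,y)/|y|\to v\cdot\mathrm{sgn}(y)$ and the kinetic term is quadratic, the infimum is attained at some~$y_n$ with~$y_n/n\to v$'' hides a genuine estimate: one needs control on~$A^{m,n}(x,y)$ (equivalently~$\temp\ln Z^{m,n}_{x,y;\temp}$) uniformly in~$y$, i.e., a shape theorem with error bounds, to rule out that a sublinear deviation of~$\tilde U$ from slope~$v$ is compensated by a fluctuation of the action. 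You flag this as the obstacle, and it is indeed the substantive content of the uniqueness proofs in~\cite{kickb:bakhtin2016} and~\cite{2016arXiv160704864B}.
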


\section{Main results}\label{sec:main-results}
In this section, we state the main results of this paper.  
Our first result concerns the zero-temperature limit of infinite volume polymer measures:
\begin{theorem}
  \label{thm:invisid-limit-for-polymer-measures}
Let $v \in \R$. With probability one, the following holds true:
\begin{enumerate}[1.]
\item\label{item:existence-of-inifinte-polymer-measures}
For all $v \in \R$,  all $\temp \in (0,1]$ and all $(m,x) \in \ZR$, $\mathcal{P}^{m,+\infty}_{x;\temp}(v) \neq \varnothing$.
\item\label{item:tightness-of-polymer-measures} Let $v \in \R$ and~ $(m,x) \in \ZR$.  Then  
the family of probability measures $ (\Pc^{m,+\infty}_{x;\temp}(v))_{\temp
    \in (0,1]}$ 
  on~$S^{m,+\infty}_{x,*} \cong \R^{\N}$ is tight.
\item \label{item:convergence-of-polymer-measures} {\bf (Zero-temperature limit.)}
For fixed $v \in \R$ and~$(m,x) \in \ZR$, let~$\mu_{\temp} \in \Pc^{m,+\infty}_{x;\temp}(v)$, $\temp \in (0,1]$.
Then, any limit point $\mu$ of $\big( \mu_{\temp} \big)$ as~$\temp \downarrow 0$ concentrates on semi-infinite
minimizers on $S^{m,+\infty}_{x,*}(v)$.
In particular, if $S^{m,+\infty}_{x,*}$ contains only one element $\gamma$, then $\mu$ is the $\delta$-measure on $\gamma$.
\end{enumerate}
\end{theorem}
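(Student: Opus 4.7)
The plan is to prove the three parts in order; the technical core is part (2), which feeds directly into (1) and (3).

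Part (1) is essentially given by Theorem~\ref{thm:positive-temperature-IVPM}, except that one needs the existence statement simultaneously for all uncountably many $\temp \in (0,1]$. I would intersect the full-measure sets $\Omega_{v,\temp}$ over a countable dense set of temperatures, obtaining the statement for rational $\temp$, and then extend to every $\temp \in (0,1]$ by a continuity-in-$\temp$ argument: for each $\omega$ in the intersection, construct $\mu^{m,+\infty}_{x;v,\temp}$ explicitly via the density formula
\[
\mu^{m,+\infty}_{x;v,\temp}\pi_n^{-1}(dy) = Z^{m,n}_{x,y;\temp}\, G_{v;\temp}\bigl((n,y),(m,x)\bigr)\,dy,
\]
and verify that the Busemann-type ratio $G_{v;\temp}$ extends continuously in $\temp$, using (A4) to control fluctuations of $\log Z^{m,n}_{x,y;\temp}$ uniformly.

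For part (2), I use the same density formula to reduce tightness on $\R^{\N}$ to coordinatewise tightness of $\mu_\temp \pi_n^{-1}$ for each $n$. Heuristically, the exponent in the density factorizes as
\[
\temp \log\bigl[Z^{m,n}_{x,y;\temp}\, G_{v;\temp}((n,y),(m,x))\bigr] \approx -\frac{(y-x-v(n-m))^2}{2(n-m)} + O(\temp),
\]
predicting Gaussian tails centered near $x+v(n-m)$ with variance of order $\temp(n-m)$, uniform for $\temp\in(0,1]$. Rigorously, one upper-bounds $Z^{m,n}_{x,y;\temp}$ by a Gaussian factor using the quadratic kinetic term together with Jensen's inequality applied to the potential part (via (A4)), and lower-bounds the normalization $G_{v;\temp}$ by a partition-function concentration estimate along slope-$v$ trajectories. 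This produces uniform-in-$\temp$ Gaussian tail bounds on the marginals, from which tightness follows.

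For part (3), fix a subsequence along which $\mu_{\temp_k} \to \mu$ in finite-dimensional distributions, and prove two facts: (i) for every $n > m$, $\mu$-a.s.\ the restriction $\gamma^{m,n}$ is a finite minimizer from $(m,x)$ to $(n,\gamma_n)$; (ii) $\mu$-a.s.\ $\gamma_n/n \to v$. For (i), condition on $\gamma_n=y$: under $\mu_{\temp_k}$ the conditional law of $\gamma^{m,n}$ is the bridge polymer $\mu^{m,n}_{x,y;\temp_k}$, and a Laplace-type argument shows this concentrates on the unique finite minimizer as $\temp_k\downarrow 0$, which holds outside a Lebesgue-null set in $y$ by Theorem~\ref{thm:zero-temperature-infinite-minimizers}; by the density formula the marginal law of $\gamma_n$ under $\mu$ is absolutely continuous (as a limit of absolutely continuous laws with the uniformly controlled densities from part (2)), so the exceptional set is $\mu$-null. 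For (ii), the uniform Gaussian tail bound from part (2), applied at time-scale $n\to\infty$, yields $\mu_\temp(|\gamma_n-vn|>\delta n) \le C_\delta e^{-c_\delta n}$ which passes to the weak limit, and Borel--Cantelli gives $\gamma_n/n\to v$ $\mu$-a.s.

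The main obstacle will be establishing genuinely uniform-in-$\temp$ concentration estimates in part (2), especially as $\temp\downarrow 0$ where the Gaussian weight sharpens and the normalization $G_{v;\temp}$ could a priori degenerate. Obtaining a matching uniform lower bound on $G_{v;\temp}$---a uniform large-deviation lower bound on the partition function for slope-$v$ endpoints---is likely the most delicate step and will probably rely on the quadratic curvature of the shape function together with the exponential-moment assumption (A5). Once those uniform bounds are secured, the remaining steps assemble routinely.
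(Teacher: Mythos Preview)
Your proposal misidentifies the technical core and contains two concrete gaps.

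For Part~1, extending $G_{v;\temp}$ continuously in $\temp$ from a countable dense set is unjustified: $G_{v;\temp}$ is defined as an $N\to\infty$ limit whose existence is proved $\temp$-by-$\temp$ on $\temp$-dependent full-measure sets, and no continuity in $\temp$ is available. The paper avoids $G_{v;\temp}$ entirely here. On the full-measure set $\Omega'$ of Theorem~\ref{thm:all-space-time-point-compactness} it shows directly that for \emph{every} $\temp\in(0,1]$ the point-to-point sequence $\mu^{m,N}_{x,Nv;\temp}$ is tight and any subsequential limit lies in $\Pc^{m,+\infty}_{x;\temp}(v)$. Likewise, for Part~2 the paper never bounds $G_{v;\temp}$ uniformly in $\temp$; tightness (Lemma~\ref{lem:tightness-IVPM-with-different-temp}) is again an immediate corollary of the same straightness estimate, applied with $\nu$ equal to the time-$N$ marginal of $\mu_\temp$ itself and then letting $N\to\infty$ using the SLLN. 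The actual technical core is therefore the uniform-in-$\temp$ free-energy concentration inequality (Theorem~\ref{thm:concentration-of-free-energy}), whose proof requires continuity of the shape constant $\alpha_{0;\temp}$ in $\temp$ and a monotonicity-in-$\temp$ trick to upgrade a pointwise Azuma bound to a uniform one. The obstacle you flag---a uniform lower bound on $G_{v;\temp}$---is never confronted.

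For Part~3, step~(i) has an error: the marginal $\mu\pi_n^{-1}$ of a zero-temperature limit is typically \emph{not} absolutely continuous---when the semi-infinite minimizer is unique it is a point mass---so the Lebesgue-null exceptional set cannot be dismissed that way, and weak limits of absolutely continuous laws need not be absolutely continuous. The paper's argument is different. It first shows (Lemmas~\ref{lem:compactness-of-density-of-mu-kappa} and~\ref{lem:tightness-of-log-density}) that $\mu_\temp\pi_n^{-1}$ has density proportional to $Z^{m,n}_{x,y;\temp}\,e^{-\temp^{-1}h_{n;\temp}(y)}$ with $(h_{n;\temp})_{\temp\in(0,1]}$ precompact in the locally-uniform topology; this precompactness comes from a truncation-plus-Lipschitz argument driven again by the straightness estimate, not from Busemann bounds. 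Then, along a subsequence where $h_{n;\temp'_k}\to h_n$, it runs a direct Laplace estimate on $\mu_{\temp'_k}\{\gamma: A^{m,n}(\gamma)>A^{m,n}(x,\gamma_n)+\eps\}$, restricting first to a large box via tightness. No conditioning on $\gamma_n$ and no absolute continuity of $\mu$ are used. Your step~(ii) is essentially correct and matches Lemma~\ref{lem:uniform-decay-for-marginal-measure}.
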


Given $v \in \R$,   Theorem~\ref{thm:positive-temperature-IVPM} says that at every fixed temperature~$\temp>0$, there is a full measure set $\Omega_{v; \temp}$ on which $\Pc^{m,+\infty}_{x,\temp}(v)$ contains a
unique element.  However,  we cannot guarantee the existence of a common full measure set
on which this holds for all values of~$\temp$ simultaneously.
Nevertheless,  in Theorem~\ref{thm:invisid-limit-for-polymer-measures}, using a compactness argument we are able to find a full measure set
on which $\Pc^{m,+\infty}_{x,\temp}(v)$ is always nonempty for all $v \in
\R$, but may potentially contain more than one element.
If one considers only countably many values of temperatures, then this difficulty with common
exceptional sets does not arise. This approach is used in the next result.

Let us now state our main theorem on  the inviscid limit of the global solutions of Burgers equation.
In addition to~\ref{item:stationary-in-space}--\ref{item:exponential-moment-for-maximum}, in this
section we also assume the potential $F$ has the property such that
conclusions of Theorem~\ref{thm:zero-temperature-infinite-minimizers} hold true (see the discussion before
Theorem~\ref{thm:zero-temperature-infinite-minimizers}), so that the global solution for inviscid
Burgers is unique.
To state this result, we need to specify the topology in which the solutions converge.
We recall that in the kick forcing case, if $u(n,x)$ is a solution to the Burgers equation with
viscosity~$\temp\ge 0$,  then $x-u(n,x)$ is a monotone increasing function (see Lemma~2.1
in~\cite{kickb:bakhtin2016} and Lemma 2.2 in~\cite{2016arXiv160704864B}).
For this reason, it is natural to consider the space $\GG$  of {\it cadlag} (i.e., right-continuous with left limits) functions $u$ such that $x-u(x)$ is increasing. The monotonicity allows to define $\GG$-convergence of a sequence of functions $u_n\in\GG$
to a function $u\in\GG$  as convergence $u_n(x)\to u(x)$, $n\to\infty$, for every continuity point
$x$ of $u$.
The space $\GG$ was first introduced in~\cite{kickb:bakhtin2016}. 
\begin{theorem} 
  \label{thm:convergence-of-busemann-function}
  Let $v\in\R$ and fix a countable set $\mathcal{D} \subset (0,1]$ that has $0$ as its limit point.
  Then there exists a full measure set~$\hat{\Omega}_v \subset \Omega_{v;0}\cap \bigcap_{\temp \in
    \mathcal{D}}\Omega_{v;\temp}$ such that the following holds true:
\begin{enumerate}[1.]
\item\label{item:convergence-of-PM-countable-temperature}For every~$(m,x) \not\in \Nc$, as
  $\mathcal{D} \ni \temp \to 0$, $\mu_{x;v,
    \temp}^{m,+\infty}$ converge to $\delta_{\gamma^m_x(v)}$ weakly.
\item \label{item:convergen-of-global-solution} {\bf (Inviscid limit for stationary solutions of the Burgers equation.)}
For every $n\in\Z$, 
  $u_{v; \temp}(n, \cdot)\to u_{v; 0}(n, \cdot)$ in $\GG$ as
  $\mathcal{D} \ni \temp \to 0$, where $u_{v;\temp}$ are the global solutions defined in~\eqref{eq:derivative-of-U-n-positive-temperature} 
  for $\temp>0$ and in~\eqref{derivative-of-U-n-zero-temperature} for $\temp=0$.
\item \label{item:convergence-of-partition-function-ratio}
 {\bf (Inviscid limit for Busemann functions and global solutions of the HJB equation.)}
For all~$(n_1,x_1),(n_2,x_2)\in\ZR$, 
\begin{equation*}
\lim_{\mathcal{D} \ni \temp \to 0} - \temp \ln G_{v; \temp} \big(  (n_1,x_1), (n_2, x_2) \big) = B_v
\big(  (n_1, x_1), (n_2, x_2) \big).
\end{equation*}
\end{enumerate}
\end{theorem}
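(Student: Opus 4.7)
The plan is to pass to the inviscid limit along the countable sequence $\mathcal{D}$, so the exceptional null sets corresponding to each $\temp$ and each of the two underlying constructions can be unioned safely. Set
\[
\hat\Omega_v \subset \Omega_{v;0} \cap \bigcap_{\temp\in\mathcal{D}} \Omega_{v;\temp},
\]
further restricted to the full-measure event on which the tightness statement of Theorem~\ref{thm:invisid-limit-for-polymer-measures} holds at every rational base point, on which the assumed inviscid uniqueness statement of Theorem~\ref{thm:zero-temperature-infinite-minimizers} holds, and on which the uniform tail estimates for polymer marginals that are developed in the partition-function section (the ones underlying part~2 of Theorem~\ref{thm:invisid-limit-for-polymer-measures}) hold simultaneously for all $\temp\in\mathcal{D}$.

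For part~\ref{item:convergence-of-PM-countable-temperature}, fix $(m,x)\notin \Nc$. By part~\ref{item:tightness-of-polymer-measures} of Theorem~\ref{thm:invisid-limit-for-polymer-measures}, the family $(\mu^{m,+\infty}_{x;v,\temp})_{\temp\in\mathcal{D}}$ is tight along $\mathcal{D}\ni\temp\downarrow 0$. Any subsequential weak limit is, by part~\ref{item:convergence-of-polymer-measures} of the same theorem, supported on semi-infinite minimizers in $S^{m,+\infty}_{x,*}(v)$; but at $(m,x)\notin\Nc$ this set contains a unique element $\gamma_x^m(v)$ by Theorem~\ref{thm:zero-temperature-infinite-minimizers}. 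Hence every subsequential limit equals $\delta_{\gamma_x^m(v)}$, which forces full convergence.

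For part~\ref{item:convergen-of-global-solution}, the identity~\eqref{eq:derivative-of-U-n-positive-temperature} reads $x-u_{v;\temp}(n,x)=\int y\,\mu^{n,+\infty}_{x;v,\temp}\pi_{n+1}^{-1}(dy)$, i.e.\ the barycenter of the first marginal. The weak convergence from step~1 gives convergence of this marginal to $\delta_{(\gamma_x^n(v))_{n+1}}$, and then the identity~\eqref{derivative-of-U-n-zero-temperature} yields the pointwise value $u_{v;0}(n,x)$. The nontrivial point, and I expect this to be the main obstacle, is upgrading weak convergence to convergence of first moments: one needs uniform integrability of the marginals $\mu^{n,+\infty}_{x;v,\temp}\pi_{n+1}^{-1}$ along $\mathcal{D}$. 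I would obtain this from tail bounds on the ratio $Z^{n,n+1}_{x,y;\temp}G_{v;\temp}((n+1,y),(n,x))$ appearing in~\eqref{eq:expression-of-polymer-measure-density}, using assumption~\ref{item:exponential-moment-for-maximum} to dominate the Gaussian factor times the environment by an integrable envelope uniformly in $\temp\in(0,1]$; the same type of estimate drives the tightness in part~\ref{item:tightness-of-polymer-measures}, so I expect it to be reusable. Once this convergence holds at every $(n,x)\notin\Nc$, it holds on the full-measure complement of a countable set of exceptional coordinates (by Theorem~\ref{thm:zero-temperature-infinite-minimizers}), in particular at every continuity point of $u_{v;0}(n,\cdot)$, which together with the uniform monotonicity of $x\mapsto x-u_{v;\temp}(n,x)$ gives convergence in $\GG$.

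For part~\ref{item:convergence-of-partition-function-ratio}, write $U_{v;\temp}(n,x)=-\temp \ln G_{v;\temp}((n,x),(n,0))$ and observe that $U_{v;\temp}(n,x)=\int_0^x u_{v;\temp}(n,y)\,dy$ by the definition of $\HH'$. The $\GG$-convergence from step~2 combined with the uniform local Lipschitz bound on $u_{v;\temp}$ (inherited from the fact that $y\mapsto y-u_{v;\temp}(n,y)$ is monotone with $\pm\infty$ averages equal to $v$) yields, via bounded convergence on compacts, the pointwise convergence $U_{v;\temp}(n,x)\to U_{v;0}(n,x)$ for every $(n,x)$. Translating back through the definitions of $U_{v;\temp}$ and~$U_{v;0}$, this reads
\[
-\temp\ln G_{v;\temp}\bigl((n,x),(n,0)\bigr)\longrightarrow -B_v\bigl((n,x),(n,0)\bigr),
\]
and the full two-point statement then follows from the cocycle identities~\eqref{eq:cocycle-property-for-zero-temp-busemann} and~\eqref{eq:cocycle-property-for-positive-temp-busemann} by composing the convergences at $(n_1,x_1)$ and $(n_2,x_2)$ relative to a common base point.
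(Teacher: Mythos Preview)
Your treatment of parts~\ref{item:convergence-of-PM-countable-temperature} and~\ref{item:convergen-of-global-solution} matches the paper's approach. For part~\ref{item:convergen-of-global-solution} the paper obtains the uniform integrability you flag as the main obstacle via the tail estimate~\eqref{eq:compactness-control-by-terminal-measure-all-space-time}, which yields a bound of the form~\eqref{eq:tail-estimate-for-one-step} on $\mu^{n,+\infty}_{y;v,\temp}\pi_{n+1}^{-1}$, uniform over $\temp\in(0,1]$ and $y$ in compacts; this is exactly the route you anticipate. (A small inaccuracy: what you call a ``uniform local Lipschitz bound'' on $u_{v;\temp}$ is not available and not what is used --- monotonicity of $y\mapsto y-u_{v;\temp}(n,y)$ only bounds the derivative from one side. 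What is actually needed and obtained is uniform local \emph{boundedness} of $u_{v;\temp}(n,\cdot)$, and that comes from the tail bound, not from monotonicity plus the asymptotic slope.)

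There is, however, a genuine gap in your argument for part~\ref{item:convergence-of-partition-function-ratio}. You establish the equal-time case $-\temp\ln G_{v;\temp}((n,x),(n,0))\to B_v((n,x),(n,0))$ correctly by integrating $u_{v;\temp}$, and then claim the general two-point statement follows from the cocycle identities~\eqref{eq:cocycle-property-for-zero-temp-busemann},~\eqref{eq:cocycle-property-for-positive-temp-busemann} ``relative to a common base point.'' But this reduction does not close: for $n_1\neq n_2$, any cocycle decomposition of $G_{v;\temp}((n_1,x_1),(n_2,x_2))$ through a base point $(n_*,x_*)$ leaves at least one factor connecting distinct times, e.g.\ $G_{v;\temp}((n_1,0),(n_2,0))$. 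This cross-time quantity is not determined by the equal-time data $U_{v;\temp}(n,\cdot)$ alone (the latter are only defined up to an additive constant per time slice, and the cross-time $G$ encodes precisely those constants). The paper treats this as a separate step: it writes $G_{v;\temp}((m,x),(n,0))=\int_\R Z^{m,n}_{x,y;\temp}\,e^{-\temp^{-1}U_{v;\temp}(n,y)}\,dy$ via~\eqref{eq:partition-function-ration-solves-burges} and runs a Laplace-type argument, bounding $-\temp\ln G_{v;\temp}((m,x),(n,0))$ from both sides by $\inf_y\{A^{m,n}(x,y)+U_{v;0}(n,y)\}=B_v((m,x),(n,0))$ plus vanishing error. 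This step uses the tightness of $\mu^{m,+\infty}_{x;v,\temp}$ to restrict the integral to a compact box, the LU-convergence $U_{v;\temp}(n,\cdot)\to U_{v;0}(n,\cdot)$ established in the equal-time case, and an LU-precompactness lemma (Lemma~\ref{lem:tightness-of-log-density}) to ensure the limit is attained at every $x$, not just off the exceptional set. Your proposal is missing this entire component.
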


The proofs of the theorems in this section will be given in Section~\ref{sec:zero-temperature-limit}, after a series of auxiliary results in Sections~\ref{sec:properties-of-partition-function}---\ref{sec:delta-straightness-and-tightness}.

\section{Properties of the partition function} \label{sec:properties-of-partition-function}

In this section, we recall useful results on minimal action and partition functions from 
~\cite{kickb:bakhtin2016} and~\cite{2016arXiv160704864B}.

We begin with a lemma on the behavior of distributional properties
of partition functions under shift and shear transformations of space-time. We write $\disteq$ to
denote identity in distribution. 
\begin{lemma}[Lemma 5.1 in \cite{2016arXiv160704864B}]
\label{lem:invariance} Let $\temp\in(0,1]$.
 For any $m,n\in\Z$ satisfying $m<n$ and any points $x,y\in\R$,
 \[
  Z^{n+l,m+l}_{\temp}(x+\Delta,y+\Delta)\disteq Z_{\temp}^{n,m}(x,y). 
 \]
Also, for any $v\in\R$,
\begin{equation}
\label{eq:7}
 Z_{\temp}^{0,n}(0, vn)\disteq e^{-\be \frac{v^2}{2}n} Z_{\temp}^{0,n}(0,0).
\end{equation}

\end{lemma}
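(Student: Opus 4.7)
The plan is to deduce both identities directly from the space-time stationarity of $F$ and the Galilean invariance of $\Pp$ stated in Section~\ref{sec:assumption-on-F}, by performing a change of variables in the path integral \eqref{eq:Z}. The integrand factors into the kinetic part $\frac{1}{2}\sum(x_k-x_{k-1})^2$, which depends only on increments, and the potential part $\sum F_k(x_k)$, which is what feels the action of the shift and shear. So the strategy is: in each case, perform a change of variable on the path so that the endpoints and time indices are renormalized to those on the right-hand side, track how the kinetic and potential terms transform, and then invoke the appropriate invariance of the law of $F$.

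For the first identity, I would substitute $\tilde x_k = x_{k+l} - \Delta$ for $k \in [m,n]_{\Z}$ in the path integral expressing $Z^{m+l,n+l}_{\temp}(x+\Delta,y+\Delta)$ (noting the apparent index ordering is a typo and the intended inequality is $m<n$, so we consider $Z^{m+l,n+l}$). The kinetic increments $\tilde x_k-\tilde x_{k-1}=x_{k+l}-x_{k+l-1}$ are preserved, and the potential becomes
\[
\sum_{k=m+1}^{n} F_{k+l}(\tilde x_k + \Delta) = \sum_{k=m+1}^{n}(\theta^{l,\Delta} F)_k(\tilde x_k).
\]
Since $(\theta^{l,\Delta}F)\disteq F$ by assumptions \ref{item:stationary-in-space}--\ref{item:indep-in-time}, the two partition functions have identical distributions.

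For the Galilean identity, I would substitute $\tilde x_k=x_k-vk$ in the integral defining $Z^{0,n}_{\temp}(0,vn)$. The endpoints become $\tilde x_0=\tilde x_n=0$, and the kinetic term satisfies
\[
\tfrac{1}{2}(x_k-x_{k-1})^2=\tfrac{1}{2}(\tilde x_k-\tilde x_{k-1})^2+v(\tilde x_k-\tilde x_{k-1})+\tfrac{v^2}{2}.
\]
Summing $k=1,\dots,n$, the middle terms telescope to $v(\tilde x_n-\tilde x_0)=0$, and the last terms contribute $\tfrac{v^2}{2}n$, which pulls out as the factor $e^{-\be v^2 n/2}$. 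Meanwhile the potential transforms as
\[
\sum_{k=1}^{n} F_k(x_k)=\sum_{k=1}^{n} F_k(\tilde x_k+vk)=\sum_{k=1}^{n}(L^{v}F)_k(\tilde x_k).
\]
Invariance of $\Pp$ under $L^v$ then gives $(L^v F)\disteq F$, so the remaining integral has the same distribution as $Z^{0,n}_{\temp}(0,0)$, which yields \eqref{eq:7}.

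There is no real obstacle: both parts are essentially bookkeeping. The only point that requires a moment of care is verifying that in the Galilean case the mixed term indeed telescopes to zero — this is exactly where the specific choice of endpoint $vn$ (rather than any other endpoint) is used, and it is what makes the quadratic penalty $e^{-\be v^2 n/2}$ come out as a deterministic prefactor rather than a random quantity depending on the path.
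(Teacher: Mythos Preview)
Your argument is correct and is exactly the standard one. Note, however, that the paper does not actually prove this lemma: it is quoted verbatim from the earlier work \cite{2016arXiv160704864B} (as indicated in the lemma's header), so there is no ``paper's own proof'' to compare against here. Your change-of-variables computation for both the translation and the shear is the expected proof, and your observation about the telescoping of the cross term $v(\tilde x_k-\tilde x_{k-1})$ is precisely the point.

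One small remark: you invoke ``invariance of $\Pp$ under $L^v$'' as if it were a stated hypothesis, but the paper only asserts explicitly that $\Pp$ is preserved by the shifts $\theta^{n,x}$ and that the set $\Omega$ is invariant under $L^v$. The distributional invariance of $F$ under $L^v$ is not listed as a standalone assumption; it follows from \ref{item:stationary-in-space} and \ref{item:indep-in-time} together (each $F_n(\cdot)$ is spatially stationary, and the $F_n$ are i.i.d., so shifting $F_n$ by $vn$ leaves the joint law unchanged). It would be worth making that one-line deduction explicit.
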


It is easy to extend this lemma to obtain the following:
\begin{lemma} \label{lem:shear-for-Z_v}  
Let $\temp \in (0,1]$ and $Z_{v; \temp}(n)=e^{\frac{1}{\temp}\frac{v^2}{2}n}Z^{0, n}_{\temp}(0, vn)$, $n\in\N$, $v\in\R$.
Then the distribution of the process $Z_{v; \temp}(\cdot)$ does not depend on $v$.
Also, for every $n\in\N$, the process $\bar Z_{n; \temp}(x)=e^{\frac{1}{\temp} \frac{x^2}{2}n
}Z^{0, n}_{\temp}(0, x)$, $x\in\R$, is stationary in $x$.
\end{lemma}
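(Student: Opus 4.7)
Both assertions reduce to a single change-of-variables argument in the path integral \eqref{eq:Z}, combined with the invariance of $\Pp$ under the Galilean shear transformations $L^v$ (which holds because each slice $F_n(\cdot)$ is spatially stationary and the slices are i.i.d.\ in $n$). The crucial point is that the Gaussian kinetic weight couples with a linear shear of the path variables to produce an explicit quadratic prefactor, while the potential values transform exactly as a shear of the environment.

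For the first claim, fix $v \in \R$ and perform the substitution $y_k = x_k - vk$ in $Z^{0,n}_{\temp,F}(0,vn)$. Since $y_0=y_n=0$, the telescoping cross term vanishes and $\sum_{k=1}^n (x_k-x_{k-1})^2 = \sum_{k=1}^n (y_k - y_{k-1})^2 + n v^2$, while $F_k(x_k) = F_k(y_k + vk) = (L^v F)_k(y_k)$. This yields
\[
  Z^{0,n}_{\temp,F}(0,vn) = e^{-\be v^2 n/2}\, Z^{0,n}_{\temp,\,L^v F}(0,0),
\]
so $Z_{v;\temp,F}(n) = Z_{0;\temp,\,L^v F}(n)$ for every $n$. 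Since $L^v$ preserves $\Pp$, the processes $Z_{v;\temp}(\cdot)$ and $Z_{0;\temp}(\cdot)$ have the same law, proving that the distribution of $Z_{v;\temp}(\cdot)$ is independent of $v$.

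For the second claim, fix $n$ and for a spatial shift $a \in \R$ perform the analogous substitution $y_k = x_k - (a/n) k$ in $Z^{0,n}_{\temp,F}(0,x+a)$; this moves the endpoint from $x+a$ to $x$ and turns the potentials into $(L^{a/n}F)_k(y_k)$. The quadratic expansion yields
\[
  Z^{0,n}_{\temp,F}(0,x+a) = \exp\!\Bigl(-\be\bigl(\tfrac{a x}{n} + \tfrac{a^2}{2n}\bigr)\Bigr)\, Z^{0,n}_{\temp,\,L^{a/n} F}(0,x).
\]
Multiplying by the Gaussian normalizer in the definition of $\bar Z_{n;\temp}(x+a)$ and using $(x+a)^2 = x^2 + 2ax + a^2$, the $a$-dependent factors cancel to give the pathwise identity $\bar Z_{n;\temp,F}(x+a) = \bar Z_{n;\temp,\,L^{a/n}F}(x)$. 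Applied jointly at finitely many spatial arguments $x_1,\ldots,x_k$ with a single common shear $L^{a/n}$, this transfers spatial stationarity of $F$ into spatial stationarity of $\bar Z_{n;\temp}(\cdot)$. There is no substantive obstacle: the argument depends only on the algebraic identity that makes the normalizer $x^2/(2n\temp)$ precisely absorb the Gaussian boost produced by the shear; the only point to verify carefully is that a \emph{single} shear of the environment implements the change at all chosen indices simultaneously, so that the one-point identity extends to finite-dimensional distributions.
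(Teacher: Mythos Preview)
Your argument is correct and is precisely the natural way to carry out what the paper leaves implicit: the paper states this lemma without proof, simply remarking that it is an easy extension of Lemma~\ref{lem:invariance}. Your shear substitution $y_k = x_k - vk$ (resp.\ $y_k = x_k - (a/n)k$), the resulting identity $Z_{v;\temp,F}(n) = Z_{0;\temp,\,L^vF}(n)$ (resp.\ $\bar Z_{n;\temp,F}(x+a) = \bar Z_{n;\temp,\,L^{a/n}F}(x)$), and the observation that $L^v$ preserves $\Pp$ because the slices $F_n(\cdot)$ are i.i.d.\ and individually shift-stationary, are exactly the ingredients behind Lemma~\ref{lem:invariance}; you have also correctly noted that a single shear handles all spatial arguments at once, so the identity lifts to finite-dimensional distributions. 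One minor remark: the exponent in the paper's definition of $\bar Z_{n;\temp}(x)$ should read $\frac{x^2}{2n}$ rather than $\frac{x^2}{2}n$ (as your computation confirms), so your interpretation of the normalizer is the right one.
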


The directional linear growth of $\ln Z_{\temp}^{m,n}(x,y)$ over long
time intervals is given by the following result from Section 6 in~\cite{2016arXiv160704864B}:

\begin{theorem} 
\label{thm:free-energy-limit-nonzero}
  There are constants $\alpha_{0; \temp}\in\R$ such that for any $v\in\R$ and $\temp
  \in (0,1]$,
\begin{equation}\label{eq:shape-theorem}
 \lim_{n\to\infty} \frac{\temp \ln Z_{\temp}^{0,n}(0, vn)}{n}\aseq \alpha_{\temp}(v) := \alpha_{0; \temp}-\frac{v^2}{2}.
\end{equation}
\end{theorem}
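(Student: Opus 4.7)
The plan is to first reduce to the zero-slope case $v=0$ via Galilean invariance, and then prove the shape theorem for $v=0$ by applying Kingman's subadditive ergodic theorem to a suitable stationary family built from the partition function.

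For the reduction, recall from Section~2 that the Galilean shear $L^v$ preserves $\Pp$ and acts on the potential by $(L^v F)_n(x) = F_n(x+vn)$. For any path $\gamma\in S^{0,n}_{0,vn}$, the reparametrization $\tilde\gamma_k := \gamma_k - vk$ produces a path with $\tilde\gamma_0 = \tilde\gamma_n = 0$, and expanding the kinetic energy using the telescoping identity $\sum_{k=1}^n(\tilde\gamma_k - \tilde\gamma_{k-1}) = 0$ yields the deterministic equality
$$
Z^{0,n}_\temp(0, vn)[F] = e^{-\be v^2 n/2}\, Z^{0,n}_\temp(0, 0)[L^v F].
$$
Taking $\frac{\temp}{n}\log$ and using that $L^v$ is measure-preserving, it suffices to show $\frac{\temp \log Z^{0,n}_\temp(0,0)}{n} \to \alpha_{0;\temp}$ almost surely.

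For the shape theorem at $v=0$, I would apply Kingman's subadditive ergodic theorem. The obvious candidate $X_{m,n} := -\temp \log Z^{m,n}_\temp(0,0)$ is stationary under time shifts by assumption~A2, but it is not directly subadditive because the Chapman--Kolmogorov identity
$$
Z^{0,m+n}_\temp(0,0) = \int_\R Z^{0,m}_\temp(0,y)\, Z^{m,m+n}_\temp(y,0)\, dy
$$
couples the two halves through an integral in $y$. To fix this, I would introduce an auxiliary point-to-measure partition function
$$
W_{m,n} := \int_\R Z^{m,n}_\temp(0,y)\, \rho(dy)
$$
for a conveniently chosen probability density $\rho$ (e.g.\ Gaussian, to mesh with the step kernel $g_\temp$). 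Conditioning on $\sigma(F_1,\ldots,F_m)$, using the independence of $(F_k)_{k>m}$ from this $\sigma$-algebra (A2) together with spatial stationarity (A1), and applying Jensen's inequality to the inner conditional expectation, one expects $-\temp \log W_{0,n}$ to form a stationary subadditive family. Kingman then produces an a.s.\ limit, which is deterministic by the ergodicity of time shifts built into A1--A2.

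The final step, which I expect to be the main technical obstacle, is to transfer the limit from $W_{0,n}$ to the pointwise partition function, by showing
$$
\bigl|\temp \log W_{0,n} - \temp \log Z^{0,n}_\temp(0,0)\bigr| = o(n) \quad \text{a.s.}
$$
This is delicate because $W_{0,n}$ averages $y\mapsto Z^{0,n}_\temp(0,y)$ over a window of order $\sqrt{\temp n}$, whereas $Z^{0,n}_\temp(0,0)$ is a pointwise value, so a uniform regularity estimate $|\temp\log Z^{0,n}_\temp(0,y) - \temp\log Z^{0,n}_\temp(0,0)| = o(n)$ on such a window is required. I would combine the spatial stationarity of $\bar Z_{n;\temp}(\cdot)$ supplied by Lemma~5.2---which, after removing the deterministic quadratic tilt $y^2/(2\temp n)$, reduces the problem to bounding fluctuations of a stationary process---with exponential moment bounds from A4--A5 and a concentration/union-bound argument to upgrade the free-endpoint limit to the pointwise one. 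Once this boundary comparison is in hand, the conclusion follows.
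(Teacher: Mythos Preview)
The paper does not actually prove this theorem; it is quoted from \cite{2016arXiv160704864B}, and the only explanation given is the sentence following the statement: the existence of the limit at $v=0$ uses the subadditive ergodic theorem, and the quadratic dependence on $v$ follows from the shear identity~\eqref{eq:7}. Your high-level strategy---Galilean reduction to $v=0$, then Kingman---therefore matches exactly what the paper says.

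The pathwise shear identity you derive is correct and is in fact a sharpening of~\eqref{eq:7}, so the reduction step is fine. The weak point is your subadditivity mechanism. You introduce $W_{m,n}=\int Z^{m,n}_\temp(0,y)\rho(dy)$ and claim that conditioning plus Jensen yields subadditivity of $-\temp\log W$, but the Chapman--Kolmogorov decomposition of $W_{0,m+n}$ integrates the midpoint $y$ against Lebesgue measure, not against $\rho$, so there is no direct comparison of $W_{0,m+n}$ with $W_{0,m}W_{m,m+n}$, and I do not see what convex function Jensen is being applied to. As stated, this step is a gap.

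The standard route (and the one used in the cited paper) avoids this by working directly with the point-to-point object: from
\[
Z^{0,m+n}_\temp(0,0)=\int_\R Z^{0,m}_\temp(0,y)\,Z^{m,m+n}_\temp(y,0)\,dy \ge \int_0^1 Z^{0,m}_\temp(0,y)\,Z^{m,m+n}_\temp(y,0)\,dy,
\]
one controls $\temp\log Z^{0,m}_\temp(0,y)-\temp\log Z^{0,m}_\temp(0,0)$ for $y\in[0,1]$ by peeling off the last step (exactly the manipulation in~\eqref{eq:cts-for-Z-bar}), obtaining approximate superadditivity of $\temp\log Z^{0,n}_\temp(0,0)$ with an additive error of the form $C+F^*_{m,\omega}(0)$, which has finite expectation by~\ref{item:exponential-moment-for-maximum}. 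Kingman then applies directly, and no separate ``transfer from $W$ to pointwise'' step is needed. Your final paragraph correctly identifies the boundary-regularity issue, but it arises already at the subadditivity stage rather than afterward, and it is resolved by this one-step peeling argument rather than by stationarity of $\bar Z_{n;\temp}$.
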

The function $\alpha_{\temp}(v)$ is called the shape function or the density of free energy. The existence of the limit in~\eqref{eq:shape-theorem} is based on the sub-additive ergodic theorem, and
the quadratic form of $\alpha_{\temp}(v)$ is due to~(\ref{eq:7}).

The counterparts of Lemma~\ref{lem:invariance} and Theorem~\ref{thm:free-energy-limit-nonzero} 
for the inviscid case
were established in~\cite{kickb:bakhtin2016}.  Let us briefly summarize them.
We recall that $A^{m,n}(x,y)$ defined in~(\ref{eq:def-p2p-action}).
It is easy to see that 
\begin{equation}\label{eq:limit-of-lnZ}
\lim_{\temp \downarrow 0} \temp \ln Z^{m,n}_{\temp} (x,y)=-  A^{m,n}(x,y).
\end{equation}
We have the following:
\begin{theorem}
  \label{thm:properties-of-optimal-action}
  \begin{enumerate}
  \item  For any $l \in \Z$ and $\Delta \in \R$, $$A^{m+l, n+l} (x + \Delta, y + \Delta)  \disteq A^{m,n}(x,y).$$
\item For any $v \in \R$, $-A^{0, n}(0, vn) \disteq -A^{0,n}(0,0) - \frac{v^2}{2}n$.
\item   There is a constant $\alpha_{0,0}\in\R$ such that for any $v\in\R$,
\[
 \lim_{n\to\infty} \frac{ -A^{0,n}(0,vn)}{n}\aseq \alpha_{0}(v) := \alpha_{0,0}-\frac{v^2}{2}.
\]
\end{enumerate}
\end{theorem}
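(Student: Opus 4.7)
The plan is to deduce the three parts from changes of variables in the minimization~\eqref{eq:def-p2p-action} combined with the invariances of the law of $F$ recorded in Section~\ref{sec:assumption-on-F}.

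For part~(1), the bijection $\tilde\gamma_{k} = \gamma_{k+l} - \Delta$ maps $S^{m+l, n+l}_{x+\Delta, y+\Delta}$ onto $S^{m, n}_{x, y}$, preserves the kinetic energy, and transforms the potential energy into $\sum_{k} (\theta^{l, \Delta} F)_{k}(\tilde\gamma_{k})$ by the definition of the space-time shift. Minimizing, $A^{m+l, n+l}(x + \Delta, y + \Delta)$ evaluated on $F$ equals $A^{m, n}(x, y)$ evaluated on $\theta^{l, \Delta} F$, and the $\theta^{l, \Delta}$-invariance of $\Pp$ yields part~(1). For part~(2), I would use the Galilean shear $\tilde\gamma_k = \gamma_k - vk$. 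Expanding squares,
\[
  I^{0, n}(\gamma) = I^{0, n}(\tilde\gamma) + v(\tilde\gamma_n - \tilde\gamma_0) + \frac{v^2}{2}\, n,
\]
and the linear term vanishes when $\gamma_0 = 0$, $\gamma_n = vn$. Meanwhile $F_k(\gamma_k) = (L^v F)_k(\tilde\gamma_k)$, so $A^{0, n}(0, vn)$ under $F$ equals $A^{0, n}(0, 0)$ under $L^v F$ plus $v^2 n/2$; the $L^v$-invariance of $\Pp$ then delivers part~(2).

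For part~(3), I would apply Kingman's subadditive ergodic theorem to $X(m, n) := A^{m, n}(vm, vn)$. Subadditivity $X(k, n) \le X(k, m) + X(m, n)$ comes from concatenating optimizers across a time-split, using the additivity of the action. Parts~(1) and~(2) together give $X(m + 1, n + 1) = X(m, n) \circ \theta^{1, v}$; the map $\theta^{1, v}$ preserves $\Pp$ and is mixing by the i.i.d.\ time structure~\ref{item:indep-in-time}. Integrability $\E X(0, 1)^+ \le v^2/2 + \E |F_1(v)| < \infty$ is immediate from~\ref{item:exponential-moment-for-maximum}, and the matching lower bound $\inf_n \E X(0, n)/n > -\infty$ follows from a standard energy-versus-potential trade-off: restricting paths to $[-B, B]$ costs at most $O(nB)$ in expected potential by~\ref{item:exponential-moment-for-maximum}, while leaving this ball costs at least $B^2 / n$ in kinetic energy, so balancing $B$ in $n$ yields a bound linear in $n$. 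Kingman's theorem then gives $X(0, n)/n \to -\alpha_0(v)$ almost surely, and $\alpha_0(v)$ is deterministic by the ergodicity of $\theta^{1, v}$. Setting $\alpha_{0, 0} := \alpha_0(0)$ and taking expectations in part~(2) then forces $\alpha_0(v) = \alpha_{0, 0} - v^2/2$.

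The main technical obstacle is the quantitative lower bound $\E X(0, n) \ge -Cn$ needed to ensure the Kingman limit is finite; the remaining steps are routine applications of the invariances from Section~\ref{sec:assumption-on-F} and of the subadditive theorem.
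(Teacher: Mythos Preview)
The paper does not prove this theorem; it simply records it as established in~\cite{kickb:bakhtin2016} (see the sentence preceding the statement). Your outline---change of variables plus invariance of $\Pp$ for parts~(1) and~(2), and Kingman's theorem for part~(3)---is indeed the standard route and matches what is done there and in the positive-temperature analogue (Lemma~\ref{lem:invariance}, Theorem~\ref{thm:free-energy-limit-nonzero}). Two remarks are in order.

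First, the $L^v$-invariance of $\Pp$ you invoke for part~(2) is not the same as the set-invariance of $\Omega$ under $L^v$ recorded after~\eqref{eq:def-of-Omega}; it is a genuine distributional statement that follows from~\ref{item:indep-in-time} together with the spatial stationarity of each $F_n(\cdot)$. You use it correctly, but it is worth saying where it comes from.

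Second, your sketch of the lower bound $\inf_n \E X(0,n)/n>-\infty$ does not close as written. Saying that paths confined to $[-B,B]$ pick up at most $O(nB)$ potential and that paths leaving $[-B,B]$ pay $B^2/n$ in kinetic energy does not yield a bound on the \emph{infimum} over all paths: a path that leaves $[-B,B]$ still accumulates potential, and this negative contribution is not controlled by the kinetic term alone under your dichotomy. The argument that works is the zero-temperature version of the stratification in Lemma~\ref{lem:estimates-by-comparing-action}: decompose paths by the level sets $E^{0,n}_s$ of the normalized kinetic energy, bound the number of integer boxes a path in $E^{0,n}_s$ can visit (the estimate~\eqref{eq:bound-for-S}), and combine this with the exponential moment~\ref{item:exponential-moment-for-maximum} to show that the potential gain on each stratum is beaten by the quadratic kinetic cost. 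This gives $\Pp\{A^{0,n}(0,0)<-tn\}\le e^{-dtn}$ for large $t$, hence $\E A^{0,n}(0,0)\ge -Cn$. You correctly flag this step as the main obstacle; the fix is exactly the machinery of Section~\ref{sec:concentration} specialized to $\temp=0$.
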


It is natural to define  
\begin{equation}\label{eq:def-of-finite-free-energy}
p_n(\temp) =
\begin{cases}
  \temp \ln Z^{0,n}_{\temp}(0,0), & \temp \in (0,1], \\
  - A^{0,n}(0,0), & \temp = 0.
\end{cases}
\end{equation}
It follows from~(\ref{eq:limit-of-lnZ}) that $p_n(\temp)$ is continuous for $\temp \in [0,1]$.

\section{Concentration inequality for free energy}\label{sec:concentration}
The aim of this section is to prove a concentration inequality for the free energy~$p_n(\temp)$. In conjunction with the shape function convexity, it will help us to establish straightness estimates.  

\begin{theorem}
  \label{thm:concentration-of-free-energy}
  There are positive constants $c_0, c_1, c_2, c_3$ such that for all $n > c_0$ and all $u \in (c_3n^{1/2} \ln^{3/2}n,  n \ln n]$,
  \begin{equation*}
    \Pp \bigl\{ |p_n(\temp) - \alpha_{0; \temp} n | \le  u,\  \temp \in [0,1]\bigr\} \ge 1- c_1 \exp
    \left\{ -c_2 \frac{u^2}{n \ln^2n} \right\}.
  \end{equation*}
\end{theorem}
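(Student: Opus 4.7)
The plan is to establish the theorem via three ingredients: (i) a uniform (in $\temp$) Lipschitz-type estimate for $p_n(\temp)$ with respect to the driving noise, giving martingale concentration of $p_n(\temp)$ around $\E p_n(\temp)$; (ii) an approximate superadditivity of $p_n$ which, combined with the concentration, yields $|\E p_n(\temp) - \alpha_{0;\temp} n| \le C\sqrt{n}\ln^{3/2}n$ via a Kesten--Alexander-type bootstrap; and (iii) a uniformization over $\temp \in [0,1]$ using a polynomial grid plus a modulus-of-continuity estimate for $\temp \mapsto p_n(\temp)$.

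For (i), fix $\temp$ and consider the Doob martingale $M_k = \E[p_n(\temp)\mid \Fc_k]$ with $\Fc_k = \sigma(F_1,\ldots,F_k)$. Resampling $F_k$ to an independent copy $F_k'$ changes $p_n(\temp)$ by at most $\sup_x |(F_k - F_k')(x)|$ on the spatial region where the polymer measure (or minimizer) has essentially all its mass at time $k$: for $\temp > 0$ the ratio of the two partition functions equals $\langle e^{-\temp^{-1}(F_k'-F_k)(\gamma_k)}\rangle$ under the polymer measure, and for $\temp=0$ the minimal action is sup-norm Lipschitz in $F_k$. A preliminary localization argument based on Gaussian random-walk tails confines the relevant region to $[-L_n,L_n]$ with $L_n=C\sqrt{n\ln n}$ off a set of probability $e^{-cn}$. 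Assumption~\ref{item:exponential-moment-for-maximum} together with a union bound over $k\le n$ and $|j|\le L_n$ gives $\max_{k,|j|\le L_n} F^*_k(j) \le C\ln n$ with probability $\ge 1 - n^{-c}$. On this good event the martingale increments satisfy $|M_k - M_{k-1}|\le C\ln n$ uniformly in $\temp$, and Azuma--Hoeffding yields
\begin{equation*}
\Pp\{|p_n(\temp) - \E p_n(\temp)| > u/2\} \le c_1 \exp(-c_2 u^2/(n \ln^2 n)).
\end{equation*}

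For (ii), the identity $Z^{0,m+n}_{\temp}(0,0) = \int Z^{0,m}_{\temp}(0,y)\,Z^{m,m+n}_\temp(y,0)\,dy$ (and its infimum analog at $\temp=0$), restricted to $y$ in a unit interval about $0$ and combined with a quantitative continuity of $Z^{0,m}_\temp(0,\cdot)$ near the diagonal, gives $p_{m+n}(\temp) \ge p_m(\temp) + p_n^{(m)}(\temp) - C\ln(m\wedge n)$, with $p_n^{(m)}$ the independent copy built from $F_{m+1},\ldots,F_{m+n}$. Coupled with the one-point concentration from (i), a standard Kesten--Alexander bootstrap upgrades the subadditive-limit $\E p_n(\temp)/n \to \alpha_{0;\temp}$ to the quantitative bound $|\E p_n(\temp) - \alpha_{0;\temp} n| \le C\sqrt{n}\ln^{3/2}n$, which is what forces the threshold $u\ge c_3 n^{1/2}\ln^{3/2}n$. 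For (iii), the good event of (i) does not involve $\temp$, so discretizing $[0,1]$ into a grid $\mathcal{T}_n$ of polynomial spacing $n^{-5}$ and applying a union bound introduces only a polynomial factor in the failure probability, absorbed into the exponential above threshold. Between grid points we use a modulus of continuity derived from the identity $\partial_\temp p_n = H(\temp)$, with $H$ the differential entropy of the Gibbs measure, together with a Laplace/Gaussian approximation near $\temp=0$ giving $|H(\temp)|\le Cn(1+|\ln\temp|)$; continuity down to $\temp=0$ is supplied by the quantitative version of~\eqref{eq:limit-of-lnZ}. Analogous regularity for $\temp\mapsto \alpha_{0;\temp}$ is handled similarly.

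The technical heart is item (iii): producing a $\temp$-uniform estimate that remains effective in the limit $\temp\downarrow 0$, where the polymer measure degenerates onto the unique minimizer and $\partial_\temp p_n$ can be of order $n\ln(1/\temp)$. Controlling this boundary regime without degrading the logarithmic factors that determine the threshold $u\ge c_3\sqrt{n}\ln^{3/2}n$ is where most of the analysis will concentrate.
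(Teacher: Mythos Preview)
Your items (i) and (ii) are essentially the paper's approach: the paper also truncates the polymer to a box, truncates the potential at level $b\ln n$, and applies Azuma with increment bound $2b\ln n$; for the mean it uses a Howard--Newman doubling argument (Lemma~\ref{lem:doubling-argument-lemma}) in place of your Kesten--Alexander bootstrap to get $|\E p_n(\temp)-\alpha_{0;\temp}n|\le b_4 n^{1/2}\ln^2 n$. These differences are cosmetic.

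The genuine divergence, and the gap in your proposal, is item (iii). You correctly recognize that Azuma gives concentration only for a \emph{fixed} martingale, and you propose a polynomial grid in $\temp$ together with an entropy bound $|\partial_\temp p_n|=|H(\temp)|\le Cn(1+|\ln\temp|)$ to interpolate. But you do not establish this entropy bound on a $\temp$-independent high-probability event, and you yourself flag the $\temp\downarrow 0$ regime as unresolved. The difficulty is real: $H(\temp)$ is random, its size depends on the Hessian of the action at the minimizer, and showing it is uniformly controlled as $\temp\to 0$ on a good set with probability $1-c_1\exp(-c_2 u^2/(n\ln^2 n))$ amounts to a nontrivial uniform Laplace estimate that is not provided.

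The paper sidesteps this entirely with a monotonicity trick. After the spatial truncation to $E^{0,n}_{\le R_1}$ (a \emph{fixed} compact set of paths), it sets
\[
q_n(\temp)=\tilde p_n(\temp)-\temp\ln|E^{0,n}_{\le R_1}|
=\temp\ln\Bigl(\frac{1}{|E^{0,n}_{\le R_1}|}\int_{E^{0,n}_{\le R_1}} e^{-\temp^{-1}A^{0,n}(\gamma)}\,d\gamma\Bigr),
\]
which is a power mean of $e^{-A}$ and hence \emph{decreasing} in $\temp$ by Lyapunov's inequality. Both $q_n(\temp)$ and $\E q_n(\temp)$ being monotone, one partitions $[0,1]$ into at most $M\le K\sqrt n$ subintervals on which $\E q_n$ varies by at most $\sqrt n$, applies the fixed-$\temp$ concentration at the $M$ endpoints, and uses monotonicity to squeeze $|q_n(\temp)-\E q_n(\temp)|\le x+\sqrt n$ for all intermediate $\temp$. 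This yields the uniform-in-$\temp$ event directly, with no derivative estimate and no boundary analysis at $\temp=0$. If you want to complete your route, you would need to supply the missing uniform entropy bound; but the monotonicity device is both simpler and more robust.
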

Similar inequalities for fixed $\temp$ were established in
~\cite{kickb:bakhtin2016} and~\cite{2016arXiv160704864B}. Theorem~\ref{thm:concentration-of-free-energy} is a nontrivial improvement of those bounds since it estimates the probability of the intersection of fixed $\temp$ events over all $\temp\in[0,1]$.

\subsection{A simpler concentration inequality}
\label{sec:simpler-concentration}

The first step in proving Theorem~\ref{thm:concentration-of-free-energy} is to obtain a concentration of~
$p_n(\temp)$ around its expectation.
\begin{lemma}
  \label{lem:free-energy-concentration-around-expectation}
  There are positive constants $b_0,b_1,b_2,b_3$ such that for all $n \ge b_0$, all $\temp \in
  [0,1]$ and all $u\in (b_3, n\ln n]$, 
  \begin{equation*}
    \Pp\Big\{ |p_n(\temp)- \E p_n(\temp)|\le  u \Big\}
    \ge 1-
    b_1 \exp \left\{ -b_2 \frac{u^2}{n\ln^2n} \right\}.
  \end{equation*}
\end{lemma}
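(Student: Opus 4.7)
The plan is to prove concentration for $p_n(\temp)$ via Doob--martingale increments, adapting the fixed-$\temp$ arguments of \cite{kickb:bakhtin2016} and \cite{2016arXiv160704864B} and verifying that every constant can be chosen independent of $\temp \in [0,1]$.

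The cornerstone observation is a uniform sup-norm Lipschitz estimate: for every $\temp \in [0,1]$ and every bounded $g:\R\to\R$,
\[
|p_n(\temp;\,F_k + g) - p_n(\temp;\,F_k)| \le \|g\|_\infty.
\]
For $\temp>0$, adding $g$ to $F_k$ multiplies $Z^{0,n}_{0,0;\temp}$ by $\E_{\mu^{0,n}_{0,0;\temp}}[e^{-g(\gamma_k)/\temp}]$, which lies in $[e^{-\|g\|_\infty/\temp},\,e^{\|g\|_\infty/\temp}]$; applying $\temp\ln$ yields the claim, with the factor $\temp$ cancelling out. For $\temp=0$ the bound is immediate from $\min_\gamma A^{0,n}$ being $1$-Lipschitz in $F_k$ in sup norm. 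Setting $\Fc_k=\sigma(F_1,\ldots,F_k)$, $M_k=\E[p_n(\temp)\mid\Fc_k]$, and introducing an independent resampling $F_k'$ of $F_k$, the Doob martingale difference $D_k=M_k-M_{k-1}$ is thereby bounded pointwise by $\|F_k-F_k'\|_\infty$, at least formally.

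Since $\|F_k-F_k'\|_\infty$ is almost surely infinite, the next step is localization to a window $[-L_n,L_n]$ with $L_n$ polynomial in $n$: on a high-probability event the polymer measure (for $\temp>0$) and the finite minimizer (for $\temp=0$) both concentrate inside this window uniformly in $\temp\in[0,1]$, and a coupling-plus-truncation argument then replaces $\|F_k-F_k'\|_\infty$ in the above bound by $\max_{|j|\le L_n}(F^*_k(j)+{F'}^*_k(j))$, up to an exponentially small error. Assumption~\ref{item:exponential-moment-for-maximum} together with a union bound gives $\max_{|j|\le L_n} F^*_k(j)\le C\ln n$ simultaneously for all $k\le n$ with probability $\ge 1-e^{-c\ln^2 n}$ for suitably large $C$; on this event the martingale increments are bounded by $O(\ln n)$, and Azuma--Hoeffding yields
\[
\Pp\{|M_n-M_0|>u\} \le b_1 \exp\!\left(-b_2\,\frac{u^2}{n\ln^2 n}\right),
\]
with constants independent of $\temp$, the lower threshold $u>b_3$ absorbing the negligible probability of the exceptional event.

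The main technical obstacle is the uniform-in-$\temp$ localization, which must interpolate continuously as $\temp\downarrow 0$ into the corresponding statement for the deterministic minimizer. For $\temp\in(0,1]$ this is handled via direct partition function estimates (cf.\ Section~\ref{sec:properties-of-partition-function}), comparing with the Gaussian free measure of variance $\temp\le 1$; at $\temp=0$ it reduces to the energetic observation that any excursion of the minimizer to $|x|\sim L$ with $L\gg n\sqrt{\ln n}$ would cost more kinetic action than any potential savings achievable under $F^*_k(j)\le C\ln n$. The continuity~\eqref{eq:limit-of-lnZ} of $p_n(\temp)$ at $\temp=0$ guarantees compatibility between the two regimes and produces a single set of constants that works across $\temp\in[0,1]$.
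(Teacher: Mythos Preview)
Your overall strategy---localize the polymer/minimizer to a polynomial window, truncate the potential to obtain $O(\ln n)$ martingale differences, then apply Azuma---is exactly the paper's, and the sup-norm Lipschitz observation is the right starting point. The execution of the truncation step, however, has a real gap.

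You propose to work on the event $\{\max_{|j|\le L_n}F^*_k(j)\le C\ln n\text{ for all }k\le n\}$, apply Azuma there, and let ``the lower threshold $u>b_3$ absorb the negligible probability of the exceptional event.'' This fails for two reasons. First, a union bound over $k\le n$ and $|j|\le L_n$ (polynomial in $n$) together with the tail $\Pp\{F^*_k(j)>C\ln n\}\le e^{\varphi}n^{-\eta C}$ from \ref{item:exponential-moment-for-maximum} yields only polynomial decay in $n$, not $e^{-c\ln^2 n}$. Second, and more seriously, the inequality must hold for all $u$ up to $n\ln n$; at $u=n\ln n$ the target right-hand side is $b_1e^{-b_2 n}$, so any exceptional event whose probability is merely polynomial---or even $e^{-c\ln^2 n}$---in $n$ destroys the bound for large $u$. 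Adding the complement's probability after a conditional Azuma is therefore insufficient.

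The paper's fix is to make the truncation \emph{deterministic}: set $\bar F_k=F_k\ONE_{\{\xi_k<b\ln n\}}$ with $\xi_k=\max_{|j|\le R_1n}F^*_k(j)$, so that $|\bar F_k|\le b\ln n$ always and Azuma applies to the Doob martingale of $\tilde p_n(\temp,\bar F)$ with no conditioning. The price is the error $|\tilde p_n(\temp)-\tilde p_n(\temp,\bar F)|\le\sum_{k=1}^n\xi_k\ONE_{\{\xi_k\ge b\ln n\}}$, and the crucial point is that this random variable has an exponential tail \emph{in its own argument}: $\Pp\{\text{error}>x\}\le 2e^{-\eta x/2}$ uniformly in $\temp$, via the exponential moment in~\ref{item:exponential-moment-for-maximum} and independence of the $\xi_k$. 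Applied with $x=u/3$ this contributes $2e^{-\eta u/6}$, which is dominated by $e^{-b_2 u^2/(n\ln^2 n)}$ throughout $u\le n\ln n$ since $u^2/(n\ln^2 n)\le u/\ln n$. Trading a conditional increment bound for an unconditional one plus a separately controlled error with exponential tail in $u$ is the device your sketch is missing; once you insert it, the rest of your outline goes through.
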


In comparison with inequalities in~\cite{kickb:bakhtin2016} and~\cite{2016arXiv160704864B},
the important step here is choosing the constants~$b_i$'s  uniformly over all~$\temp \in [0,1]$. 
However, the event on the left-hand side is still defined for an arbitrary but fixed~$\temp \in [0,1]$. 
The proof of
this lemma is based on some auxiliary results that we prove first.

\smallskip

For $m<n$, we define 
\begin{equation*}
  \Sigma^{m,n}(\gamma) = 
  \bigg[ \sum_{j=m+1}^n
  \big(\gamma_{j} - \gamma_{j-1}  \big)^2     - \frac{(\gamma_n-\gamma_m)^2}{n-m}    \bigg]^{1/2}.
\end{equation*}
The function $\Sigma^{m,n}(\cdot)$ compares the action of a path $\gamma$ between time $m$ and $n$
to the action of the straight
line connecting $(m, \gamma_m)$ and $(n, \gamma_n)$.
It is also easy to check that $\Sigma^{m,n}(\cdot)$ is invariant under
space translations and shear transformations, namely, for any path $\gamma$, $\Sigma^{m,n}(\gamma) = \Sigma_2^{m,n}(\theta^{0,x}\gamma)=
\Sigma^{m,n}(L^v \gamma) $ for any $v,x\in \R$.

The next lemma summarizes various estimates which reflect the idea that  with high probability,   polymer measures assign small weights to
the path $\gamma$ that has large values of $\Sigma^{m,n}(\gamma)$.
To state the lemma, we need some more notations.

Let us define the set of paths
\begin{equation}\label{eq:def-of-E}
  E^{m,n}_s = \left\{ \gamma \ :\  \frac{1}{\sqrt{n}} \Sigma^{m,n}(\gamma) \in [s,s+1) \right\},
  \quad s \in \Z.
\end{equation}
For a Borel set $B \subset \R^{n-m-1}$, let us define
\begin{equation}
  \label{eq:partition-function-of-a-subset}
Z^{m,n}_{x,y; \temp}(B) = \int_{\R\times B\times \R}
e^{ - \be A^{m,n}(x_m, ..., x_n)} \, 
    \delta_x(dx_x)dx_m...dx_{n-1}\delta_y(dx_n).
\end{equation}
Let $\pi_{m,n}$ denote the restriction of a vector or sequence onto the time interval~$[m,n]_{\Z}$.
For a Borel set~$D \subset \R^{\infty} =  S^{-\infty,\infty}_{*,*}$, we
define 
\begin{equation*}
  \mu_{x,y; \temp}^{m,n}(D) = \mu_{x,y; \temp}^{m,n}(\pi_{m,n} D), \quad
  Z^{m,n}_{x,y; \temp}(D) = Z^{m,n}_{\temp}(x,y,D) = Z^{m,n}_{x,y; \temp} \mu_{x,y; \temp}^{m,n}(D).
\end{equation*}
\begin{lemma} 
  \label{lem:estimates-by-comparing-action}
  Let $n \ge 2$.
  There are constants $d_1> 0$, $R_1>0$ such that if $s, s' \ge R_1$, then the following statements hold:
  \begin{gather}
     \label{eq:partition-function-not-too-small}
      \Pp \Big\{ Z_{x, y; \temp}^{0, n}([0,1]^{n-1}) > 2^{-\be\cdot s n}, \ x, y \in [0,1], \ \temp \in (0,1]  \Big\} \ge 1-  e^{-d_1 s n}, \\
      \label{eq:partition-function-not-too-large}
   \Pp \Big\{    Z_{x,y; \temp}^{0,n} (  E_{s'}^{0,n} ) \le  2^{-\be \cdot 2s'n -1}, \  x,y\in [0,1],\ \temp \in (0,1] \Big\}    \ge
   1-  e^{-d_1s'n}, \\
\label{eq:partition-function-with-more-than-linear-action}
   \Pp \Big\{   Z_{x,y; \temp}^{0,n} \big( \bigcup_{s' \ge s} E_{s'}^{0,n} \big) \le  2^{-\be \cdot 2sn}, \  x,y\in [0,1]; \temp \in (0,1] \Big\}    \ge
   1-  2e^{-d_1sn}, \\
  \label{eq:polymer-measure-with-more-than-linear-action}
   \Pp \Big\{    \mu_{x,y; \temp}^{0,n} \big( \bigcup_{s' \ge s} E_{s'}^{0,n} \big) \le    2^{-\be \cdot sn}, \  x,y\in [0,1],\temp \in (0,1] \big\}    \ge
   1-  3e^{-d_1s n}, \\
\label{eq:large-deviation-for-Z-bar}
\begin{multlined}
  \Pp \Big\{ \mu_{x,y; \temp}^{0,n} \big\{ \gamma: \frac{1}{n}\max\limits_{0\le j \le n}|\gamma_j|
  \ge s \Big\} \le 2^{-\be \cdot sn} ,  \ x,y\in [0,1],\temp \in (0,1] 
  \Big\}  \\
\ge 1-  3e^{-d_1sn}.
\end{multlined}
\end{gather}
\end{lemma}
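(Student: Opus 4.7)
The five bounds \eqref{eq:partition-function-not-too-small}--\eqref{eq:large-deviation-for-Z-bar} form a cascade, and the essential work lies in \eqref{eq:partition-function-not-too-small} and \eqref{eq:partition-function-not-too-large}; the remaining three will be quick consequences. Bound \eqref{eq:partition-function-with-more-than-linear-action} will follow from \eqref{eq:partition-function-not-too-large} by a geometric sum over $s'\ge s$, losing only a factor of $2$ in the exponent. Bound \eqref{eq:polymer-measure-with-more-than-linear-action} will follow by writing $\mu=Z(\cdot)/Z$ and dividing the upper bound \eqref{eq:partition-function-with-more-than-linear-action} by the lower bound \eqref{eq:partition-function-not-too-small} applied to $Z^{0,n}_{x,y;\temp}\ge Z^{0,n}_{x,y;\temp}([0,1]^{n-1})$. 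Bound \eqref{eq:large-deviation-for-Z-bar} will follow from \eqref{eq:polymer-measure-with-more-than-linear-action}: if $\max_{0\le j\le n}|\gamma_j|/n\ge s$ while $\gamma_0,\gamma_n\in[0,1]$, a Cauchy--Schwarz argument on the excursion out to $\pm sn$ and back forces $\sum(\gamma_k-\gamma_{k-1})^2\ge s^2n-O(1)$, hence $\Sigma^{0,n}(\gamma)/\sqrt{n}\ge s-O(1/\sqrt{n})$, so the event is contained in $\bigcup_{s'\ge s-C}E^{0,n}_{s'}$ for some absolute $C$.

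For \eqref{eq:partition-function-not-too-small}, I will restrict the $\gamma$-integration to $\gamma_1,\ldots,\gamma_{n-1}\in[0,1]$. With $x,y\in[0,1]$, every such path satisfies $I^{0,n}(\gamma)\le n/2$ deterministically and $H^{0,n}_\omega(\gamma)\le\sum_{k=1}^n F^*_{k,\omega}(0)$, yielding the pointwise bound $Z^{0,n}_{x,y;\temp}([0,1]^{n-1})\ge e^{-\be(n/2+\sum_k F^*_k(0))}$. By \ref{item:exponential-moment-for-maximum} and a standard Chernoff estimate applied to the i.i.d.\ sequence $(F^*_k(0))$, there is $t>0$ with $\Pp\{\sum_k F^*_k(0)>tn\}\le e^{-ctn}$. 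Choosing $R_1$ so that $1/2+t\le R_1\log 2$ and $d_1\le c$, the desired lower bound $2^{-\be sn}$ will hold on this single high-probability event simultaneously for every $x,y\in[0,1]$, every $\temp\in(0,1]$, and every $s\ge R_1$.

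For \eqref{eq:partition-function-not-too-large}, I will use that on $E^{0,n}_{s'}$ one has $I^{0,n}(\gamma)\ge s'^2 n/2$, so $I^{0,n}\ge s'^2n/4+I^{0,n}/2$ and
\[
Z^{0,n}_{x,y;\temp}(E^{0,n}_{s'})\le e^{-\be s'^2 n/4}\,\tilde Z^{0,n}_{x,y;\temp},\qquad \tilde Z^{0,n}_{x,y;\temp}:=\int e^{-\be(I^{0,n}(\gamma)/2+H^{0,n}(\gamma))}\,d\gamma.
\]
Here $\tilde Z$ is a modified polymer partition function with variance parameter $2\temp$ in place of $\temp$. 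By Fubini, \ref{item:indep-in-time}, and the definition of $\lambda$ in \ref{item:exponential-moment-with-viscosity}, one computes $\E\tilde Z^{0,n}_{x,y;\temp}=(4\pi\temp)^{n/2}\lambda(\be)^n g_{2n\temp}(y-x)$, uniformly bounded for $x,y\in[0,1]$. Markov at threshold $t=2^{-\be\cdot 2s'n-1}e^{\be s'^2n/4}$ produces the bound, provided the quadratic gain $\be s'^2 n/4$ dominates the linear term $2s'n\be\log 2$ together with the $\temp$-dependent contributions $n\ln\lambda(\be)$ and $(n/2)\ln(4\pi\temp)$; this holds once $s'\ge R_1$ is large enough. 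Uniformity in $x,y\in[0,1]$ is handled by a standard continuity/covering step.

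The main obstacle will be precisely this uniformity in $\temp\in(0,1]$, which is the new ingredient compared to the fixed-$\temp$ bounds of \cite{kickb:bakhtin2016,2016arXiv160704864B}. The plan is to keep every estimate in the scale-covariant form $e^{-\be\cdot(\text{action})}$, so that its combinatorial content is $\temp$-independent, and to dominate the remaining $\temp$-dependent factors by the geometric gains. Specifically, $\ln\lambda(\be)\le\varphi\be/\eta$ by H\"older applied to \ref{item:exponential-moment-for-maximum} and hence is linear in $\be$, while the Gaussian normalization $(2\pi\temp)^{n/2}$ contributes only $O(n|\ln\temp|)$ to the log estimate. Both are absorbed by the quadratic gain $\be s'^2n/4$ in \eqref{eq:partition-function-not-too-large} and the linear gain $\be sn\log 2$ in \eqref{eq:partition-function-not-too-small}, once $R_1$ is chosen large and $d_1$ small.
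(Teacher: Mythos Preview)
Your reductions of \eqref{eq:partition-function-with-more-than-linear-action}--\eqref{eq:large-deviation-for-Z-bar} to \eqref{eq:partition-function-not-too-small} and \eqref{eq:partition-function-not-too-large} are correct and match the paper, and your argument for \eqref{eq:partition-function-not-too-small} is essentially the paper's.

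The gap is in \eqref{eq:partition-function-not-too-large}. Your first-moment Markov bound on $\tilde Z$ produces a factor $\lambda(\be)^n$, and for the estimate to be uniform over $\temp\in(0,1]$ you need $\sup_{\be\ge 1}\be^{-1}\ln\lambda(\be)<\infty$ so that the quadratic gain $\be s'^2 n/4$ absorbs it. Your claimed inequality $\ln\lambda(\be)\le\varphi\be/\eta$ via H\"older from \ref{item:exponential-moment-for-maximum} only holds for $\be\le\eta$: Lyapunov's inequality goes the wrong way for $\be>\eta$, and neither \ref{item:exponential-moment-with-viscosity} nor \ref{item:exponential-moment-for-maximum} controls the growth of $\lambda(\be)$ as $\be\to\infty$. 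In fact, under (A1)--(A5) one may have $\ln\lambda(\be)/\be\to\infty$ (take $-F_0(0)$ with a density like $e^{-x^{3/2}}$, so all exponential moments exist but $\ln\lambda(\be)\sim c\be^3$). Thus for any fixed $R_1$ there will be small $\temp$ for which your Markov bound does not give the required $e^{-d_1 s' n}$. A secondary issue is that Markov at fixed $(x,y)$ does not immediately give the event uniformly over $x,y\in[0,1]$; the ``standard continuity/covering step'' is not free here because the integrand involves $e^{-\be(x_1-x)^2/2}$ with $x_1$ unbounded.

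The paper avoids taking expectations of the partition function altogether. It discretizes $E^{0,n}_{s'}$ into unit lattice boxes indexed by $S^n_{s'}=\{([\gamma_1],\dots,[\gamma_{n-1}])\}$, bounds the integrand pointwise in $\omega$ by $e^{-\be(\frac12 s'^2 n - F^{*}_{\omega,n,s'})}$ with $F^{*}_{\omega,n,s'}=\max_{(i_1,\dots,i_{n-1})\in S^n_{s'}}\sum_k F^{*}_{k,\omega}(i_k)$, and estimates $|S^n_{s'}|\le e^{(\ln s'+K)n}$ by a ball-volume count. This yields, for all $x,y\in[0,1]$ and $\temp\in(0,1]$ simultaneously, the inclusion $\{Z^{0,n}_{x,y;\temp}(E^{0,n}_{s'})>2^{-\be\cdot 2s'n-1}\}\subset\{F^{*}_{\omega,n,s'}>s'n\}$ for large $s'$, and the latter probability is handled by a union bound over $S^n_{s'}$ together with the exponential Markov inequality at the \emph{fixed} exponent $\eta$ from \ref{item:exponential-moment-for-maximum}. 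The point is that only the single exponential moment $\E e^{\eta F^{*}}$ is ever used, never $\lambda(\be)$ at large $\be$.
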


\begin{proof}
  It suffices to show~(\ref{eq:partition-function-not-too-small}) and~(\ref{eq:partition-function-not-too-large}).
  Then~(\ref{eq:partition-function-with-more-than-linear-action}) will follow
  from~(\ref{eq:partition-function-not-too-large}) by summing over integer $s \ge s'$, 
  and~(\ref{eq:polymer-measure-with-more-than-linear-action}) from~(\ref{eq:partition-function-not-too-small})
  and~(\ref{eq:partition-function-with-more-than-linear-action}).  Finally,
  the convexity of~$z \mapsto z^2$ and Jensen's inequality imply that for all $\gamma \in
  S_{x,y}^{0,n}$ and all  $x,y\in[0,1]$,
  \begin{align*}
    [\Sigma^{0,n}(\gamma) \big]^2
    &\ge  \sum_{j=1}^n |\gamma_j - \gamma_{j-1}|^2
    -\frac{1}{n} \ge \frac{1}{n} \biggl( \sum_{j=1}^n |\gamma_{j}-\gamma_{j-1}| \biggr)^2
      -\frac{1}{n} \\
&\ge \frac{1}{n} \Big[  2\bigl( \max_{1\le j \le n-1}|\gamma_j| - 1 \bigr)_+ \Big]^{2} -\frac{1}{n}.
  \end{align*}
  Therefore, when~$s$ is large,  $\max\limits_{1\le j \le n-1}|\gamma_j| \ge sn$ implies
  $\Sigma^{0,n}(\gamma) \ge s \sqrt{n}$, so~(\ref{eq:large-deviation-for-Z-bar}) holds.

By definition~(\ref{eq:partition-function-of-a-subset}), we have
\begin{equation*}
\begin{split}
  Z_{x,y; \temp}^{0, n}([0,1]^{n-1})  \ge  e^{-\be[n/2+
    F_{\omega}^{*}(0,...,0) ]} ,
  \quad x, y \in [0,1],
\end{split}
\end{equation*}
where  $F_{\omega}^{*}(i_1,...,i_n) = \sum_{j=1}^n F_{\omega}^{*}(j,i_j)$
(see~(\ref{eq:def-F-star}) for the definition of $F^{*}_{\omega}$).
So, for all $x,y \in [0,1]$, $\temp \in (0,1]$,
\begin{align}
    \label{eq:first-inclusion} 
  \bigl\{ \omega :Z_{x,y; \temp}^{0,n} ([0,1]^{n-1}) <  2^{-\be\cdot s n} \bigr\}
\subset   \bigl\{\omega:  n( s \ln 2 - 1/2) < F_{\omega}^{*}(0,...,0)
  \bigr\}. 
\end{align}
By Markov inequality,  we have
\begin{equation}\label{eq:cramer-for-potential}
  \Pp \bigg\{ F_{\omega}^{*}(0,...,0) > r   \bigg\} \le
  e^{-\eta r}\E e^{ \sum_{j=1}^n \eta F^{*}_{\omega}(j,0)} \le e^{-\eta r} \bigl( \E e^{\eta F^{*}_{\omega}(0,0)} \bigr)^n.
\end{equation}
Combining~(\ref{eq:first-inclusion}) and~(\ref{eq:cramer-for-potential}), we
obtain~(\ref{eq:partition-function-not-too-small}):  for sufficiently large~$s$,
\begin{align*}
  &\Pp \left\{ Z_{x,y; \temp}^{0,n}([0,1]^{n-1}) > 2^{-\be\cdot s n}, \ x, y \in [0,1]; \temp \in (0,1]  \right\} \\
  \ge &1  - \Pp \bigl\{\omega:  n( s \ln 2 -1/2) < F_{\omega}^{*}(0,...,0)  \bigr\} \\
  \ge &1 - e^{-n\cdot \eta \big( s \ln 2 -1/2)} \bigl( \E e^{\eta F^{*}_{\omega}(0,0)} \bigr)^n.
\end{align*}

Next we turn to~(\ref{eq:partition-function-not-too-large}).
In proving this, we will write $s$ instead of $s'$.
Let us define
\begin{equation*}
S^n_s = \{ (i_1, ..., i_{n-1}): \exists \gamma \in \tilde{E}^{0,n}_s, x, y \in [0,1]
\text{ s.t. } [\gamma_j] = i_j, 1 \le j \le n-1 \},
\end{equation*}
where~${\tilde{E}^{0,n}_s  = E^{0,n}_s \cap \big( \bigcup_{x,y \in [0,1]} S_{x,y}^{0,n} \big)}$.
Then we have
\begin{equation}\label{eq:smallness-of-Z-E}
Z_{x,y; \temp}^{0,n} (E^{0,n}_s) 
  \le  |S^n_s|  e^{\be  \bigl(-\frac{1}{2}s^2n  + F_{\omega,n,s}^{*}\bigr)}, \quad x,y \in
  [0,1], \temp \in (0,1],
\end{equation}
where~$F_{\omega,n,s}^{*} = \max\{ F_{\omega}^{*}(i_1,..,i_{n-1},0) : (i_1,...,i_{n-1})\in S^n_s\}$.

We need to estimate the size of $S^n_s$. For $1 \le j  \le n$, let us define $k_j = \gamma_j-\gamma_{j-1}$ and~${\tilde{k}_j = 
[\gamma_j]-[\gamma_{j-1}]} $. 
Clearly, $|k_j - \tilde{k}_{j}| \le 2$.
If $\gamma \in \tilde{E}_s^{0,n}$, then the  Cauchy--Schwarz inequality  implies
\begin{equation*}
 \sum_{j=1}^n|k_j|  \le \sqrt{n} \sqrt{\sum_{j=1}^n k_j^2} \le \sqrt{(s+1)n^2 + n}.
\end{equation*}
Comparing $\sum_{j=1}^nk_j^2$ and $\sum_{j=1}^n\tilde{k}_j^2$, we obtain
\begin{equation*}
  \biggl|   \sum_{j=1}^nk_j^2 - \sum_{j=1}^n\tilde{k}_j^2  \biggr|
  \le \sum_{j=1}^n |k_j - \tilde{k}_j| |k_j + \tilde{k}_j |
  \le 2 \sum_{j=1}^n (2|k_j| + 2)
  \le 8sn.
\end{equation*}
Therefore, $\gamma \in \tilde{E}_{s}^{0,n}$ implies that 
\begin{equation}\label{eq:condition-for-k-tilde}
  \sum_{j=1}^n \tilde{k}_j^2  \le (s+1)^2 n + 8sn =: [r_s(n)]^2.
\end{equation}
The size of $S_s^n$ is bounded by the number of $n$-vectors~$(\tilde{k}_0, ..., \tilde{k}_{n-1})$
satisfying~(\ref{eq:condition-for-k-tilde}), which is then bounded by the volume of $n$-dimensional
ball of radius $r_s(n) + \frac{\sqrt{n}}{2}$. (To obtain this estimate, we consider unit cubes centered at integer
points, with half diagonal lengths $\frac{\sqrt{n}}{2}$.)
Hence, when $s$ is large,
\begin{equation}\label{eq:bound-for-S}
\begin{split}
  |S^n_s| \le&\frac{\pi^{n/2}}{\Gamma(n/2+1)} \big(  r(n) + \frac{\sqrt{n}}{2} \big)^n\\
  \le & \frac{\pi^{n/2}}{\Gamma(n/2+1)} \cdot ( K_1 s \sqrt{n})^{n}    \le  e^{(\ln s + K_2) n},
\end{split}
\end{equation}
where $K_1, K_2$ are constants and we used $\ln \Gamma(z) = z\ln z -z + \mathrm{O}(\ln z)$, $z\to\infty$.

Combining~(\ref{eq:smallness-of-Z-E}) and ~(\ref{eq:bound-for-S}) , we see that for $x,y \in [0,1]$, $\temp \in (0,1]$
and large $s$,
\begin{equation}
\begin{split}
 &\bigl\{ \omega : Z_{x,y; \temp}^{0,n} (E^{0,n}_s) > 2^{-\be \cdot 2s n - 1} \bigr\} \\
\subset&  \bigl\{\omega :   -\frac{1}{2}s^2n   + F_{\omega,n,s}^{*}+ \temp \ln|S^n_s| > - 2sn- \temp \bigr\} \\
  \subset& \bigl\{ \omega : F^{*}_{\omega,n,s} >  \frac{1}{2}s^2n  -2sn - \temp(1+\ln |S^n_s|) \bigr\}\\
  \subset& \bigl\{ \omega : F^{*}_{\omega,n,s} >  sn\bigr\}.
\end{split}
\label{eq:second-inclusion}
\end{equation}
Since the distribution of $F_{\omega}^{*}(i_0,...,i_{n-1})$ does not depend on the choice of
the vector $(i_0,...,i_{n-1})$, we obtain that for any $r>0$, 
\begin{equation}\label{eq:2}
  \Pp\{F^{*}_{\omega,n,s} > r \} \le
  |S^n_s| \Pp \{F^{*}_{\omega}(0,...,0) > r  \}.
\end{equation}
Combining~(\ref{eq:cramer-for-potential}), (\ref{eq:bound-for-S}),  (\ref{eq:smallness-of-Z-E}),
(\ref{eq:second-inclusion}), and~(\ref{eq:2}), we see that 
\begin{align*}
 &\Pp \bigg\{    Z_{x, y; \temp}^{0, n} ( E^{0,n}_s ) \le 2^{-\be \cdot 2sn - 1}  , \ x, y \in [0,1]; 
 \ \temp
   \in (0,1] \bigg\}     \\
\ge  &1-   |S^n_s|\Pp \{   F^{*}_{\omega,n,s} >  sn  \} \\
 \ge & 1- e^{(\ln s + K_2) n} e^{-\eta s n} \bigl( \E e^{\eta F^{*}_{\omega}(0,0)} \bigr)^n 
\end{align*}
Choosing $s$ large enough concludes the proof of~(\ref{eq:partition-function-not-too-large}).
\end{proof}

Let $E^{m,n}_{ \le R_1} = \bigcup_{s \le R_1}E^{m,n}_s$.
The following lemma states that $Z^{0,n}_{x,y; \temp}(E^{0,n}_{\le R_1})$ cannot be large.
\begin{lemma}
\label{lem:partition-function-inside-the-box-not-big}
There is some constant $d$ such that for sufficiently large $t$, 
\begin{equation*}
\Pp \Big\{ Z_{x,y; \temp}^{0, n}(E^{0,n}_{\le R_1} ) \le e^{\be tn-1}, \ x, y \in [0,1];\ \temp \in (0,1] \Big\} \ge 1 - e^{-d tn}.
\end{equation*}
\end{lemma}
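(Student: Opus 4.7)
The plan is to adapt the cube-decomposition technique from the proof of Lemma~\ref{lem:estimates-by-comparing-action} to the small-$s$ regime. I would begin by writing $E^{0,n}_{\le R_1}=\bigcup_{s=0}^{\lfloor R_1\rfloor} E^{0,n}_s$ and, for each $s$, covering $E^{0,n}_s\cap S^{0,n}_{x,y}$ (with $x,y\in[0,1]$) by unit cubes indexed by integer skeletons $\tilde\gamma=([\gamma_1],\ldots,[\gamma_{n-1}])\in S^n_s$. For every $\gamma$ in such a cube, $I^{0,n}(\gamma)\ge 0$ and $H^{0,n}_\omega(\gamma)\ge -F^*_\omega(\tilde\gamma)$, where $F^*_\omega(\tilde\gamma):=\sum_{k=1}^{n} F^*_{k,\omega}(\tilde\gamma_k)$ (with the convention $\tilde\gamma_n=[y]\in\{0,1\}$ and $F^*_{k,\omega}$ as in~(\ref{eq:def-F-star})), so $\int_{\mathrm{cube}(\tilde\gamma)} e^{-\be A^{0,n}(\gamma)}\,d\gamma\le e^{\be F^*_\omega(\tilde\gamma)}$. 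The counting argument from the proof of Lemma~\ref{lem:estimates-by-comparing-action}---comparing $|S^n_s|$ with the volume of an $n$-ball of radius $r_s(n)+\sqrt n/2=O(\sqrt n)$ for $s\le R_1$---gives a constant $K_3=K_3(R_1)$ such that $|S^n_{\le R_1}|:=\bigl|\bigcup_{s\le R_1} S^n_s\bigr|\le e^{K_3 n}$. Summing the cube estimates yields the deterministic bound
\[
Z^{0,n}_{x,y;\temp}(E^{0,n}_{\le R_1})\;\le\;(R_1+1)\,e^{K_3 n+\be F^{**}_n},
\qquad F^{**}_n:=\max_{\tilde\gamma\in S^n_{\le R_1}} F^*_\omega(\tilde\gamma),
\]
valid simultaneously for all $x,y\in[0,1]$ and all $\temp\in(0,1]$.

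Next I would control $F^{**}_n$ probabilistically. Since $F^*_\omega(\tilde\gamma)$ is a sum of $n$ random variables that are independent across $k$ (by~\ref{item:indep-in-time}) and spatially stationary with $\E e^{\eta F^*_{k,\omega}(0)}=e^\varphi$ (by~\ref{item:stationary-in-space} and~\ref{item:exponential-moment-for-maximum}), exponential Markov combined with the union bound yields, for every $r>0$,
\[
\Pp(F^{**}_n>rn)\;\le\;|S^n_{\le R_1}|\,\Pp\!\Bigl(\sum_{k=1}^{n} F^*_{k,\omega}(0)>rn\Bigr)\;\le\;e^{K_3 n}\cdot e^{-\eta rn+\varphi n}\;=\;e^{-n[\eta r-K_3-\varphi]}.
\]

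To extract uniformity in $\temp$, I would invert the deterministic bound: the condition $(R_1+1)e^{K_3 n+\be F^{**}_n}\le e^{\be tn-1}$ rearranges to $F^{**}_n\le tn-\temp(K_3 n+\ln(R_1+1)+1)$, which is strictest at $\temp=1$. Hence
\[
\bigl\{F^{**}_n\le(t-K_3)n-\ln(R_1+1)-1\bigr\}\;\subset\;\bigcap_{\temp\in(0,1]}\!\bigl\{Z^{0,n}_{x,y;\temp}(E^{0,n}_{\le R_1})\le e^{\be tn-1},\ x,y\in[0,1]\bigr\},
\]
and applying the tail bound at the appropriate threshold value of $r$ yields a complementary probability of order $e^{-n[\eta t-(\eta+1)K_3-\varphi]}$ (up to a harmless constant from the $\ln(R_1+1)+1$ correction). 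This is at most $e^{-dtn}$ for any fixed $d\in(0,\eta)$ as soon as $t\ge T_0:=[(\eta+1)K_3+\varphi]/(\eta-d)$.

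The main conceptual point, and the reason to prefer this skeleton-based estimate over the seemingly natural coordinate-wise bound $H^{0,n}_\omega(\gamma)\ge-\sum_k\sup_{|x|\le Cn}|F_{k,\omega}(x)|$, is that the latter injects an $n\log n$ term (arising from $\E\max_{|j|\le Cn}F^*_{k,\omega}(j)\sim\log n$) which would force $t$ to scale with $\log n$ rather than being an absolute constant. Replacing the positional supremum by a maximum over only the $e^{O(n)}$ discrete skeletons in $S^n_{\le R_1}$ keeps the moment-generating-function factor at $e^{\varphi n}$, preserving the linear-in-$n$ structure. Uniformity over the uncountable family $\temp\in(0,1]$ then reduces to a one-line monotonicity observation on a $\temp$-free deterministic estimate.
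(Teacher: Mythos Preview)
Your proof is correct and follows essentially the same route as the paper: the same skeleton/cube decomposition of $E^{0,n}_{\le R_1}$, the same counting bound $|S^n_{\le R_1}|\le e^{K n}$, the same deterministic majorization $Z^{0,n}_{x,y;\temp}(E^{0,n}_{\le R_1})\le |S^n_{\le R_1}|\,e^{\be F^{**}_n}$, and the same Markov-plus-union-bound tail estimate with uniformity in $\temp$ obtained by taking $\temp=1$ as the worst case. The only cosmetic difference is that the paper absorbs your redundant $(R_1+1)$ factor into the $e^{Kn}$ count and passes through the cruder inclusion $\{F^{**}_n>tn/2\}$ rather than tracking the exact threshold, but the argument is the same.
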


\begin{proof}
  We will continue using the notations from the proof of Lemma~\ref{lem:estimates-by-comparing-action}.
  Let us define~$S_{\le R_1}^n = \bigcup_{s \le R_1}S^n_s$.
  Similarly to~(\ref{eq:bound-for-S}) and~(\ref{eq:smallness-of-Z-E}), we have
\begin{equation}\label{eq:upper-bound-for-S-less-than-R1}
|S^n_{\le R_1} | \le \frac{\pi^{n/2}}{\Gamma(n/2+1)} \bigl( r_{R_1}(n) + \sqrt{n}/2 \bigr)^n  \le  e^{K_1 n}
\end{equation}
for some constant $K_1$, and
\begin{equation}
Z_{x,y; \temp}^{0, n} (E^{0,n}_{\le R_1}) 
  \le  |S^{n}_{\le R_{1}}|  e^{ \be  F_{\omega,n, \le R_1}^{*}}, \quad x, y \in [0,1];\ \temp
  \in (0,1],
\end{equation}
where $ F_{\omega,n,\le R_1}^{*} = \max\{ F_{\omega}^{*}(i_1,...,i_{n-1}) : (i_1,...,i_{n-1})\in S^n_{\le R_1}\}$.
Therefore, for~$x,y \in [0,1]$, $\temp \in (0,1]$ and sufficiently large $t$,
\begin{equation*}
\begin{split}
 \bigl\{ \omega : Z_{x,y; \temp}^{0,n} (E^{0,n}_{\le R_1}) > e^{\be tn-1} \bigr\} 
\subset&  \bigl\{\omega :    F_{\omega,n,\le R_1}^{*}+ \temp \ln|S^n_{\le R_1}| >tn -
\temp  \bigr\} \\
\subset&  \bigl\{\omega :    F_{\omega,n,\le R_1}^{*} >tn - \temp ( \ln |S^n_{\le R_1}| + 1)  \bigr\} \\
  \subset& \bigl\{ \omega : F^{*}_{\omega,n,\le R_1} >  tn/2\bigr\}.
\end{split}
\end{equation*}
Combining this with~(\ref{eq:cramer-for-potential}) and~(\ref{eq:upper-bound-for-S-less-than-R1}), we obtain
\begin{align*}
&  \Pp \Big\{ Z_{x,y; \temp}^{0, n}(E^{0,n}_{\le R_1} ) \le e^{\be tn-1}, \ x, y \in [0,1];\ \temp \in (0,1] \Big\}\\
  \ge& 1 - \Pp \bigl\{ \omega : F^{*}_{\omega,n,\le R_1} >  tn/2\bigr\}\\
  \ge & 1- |S^n_{\le R_1} | \Pp\bigl\{ \omega : F^{*}_{\omega}(0,...,0) >  tn/2\bigr\} \\
  \ge & 1-  e^{-(\eta t/2 - K_1) n} \bigl( \E e^{\eta F^{*}_{\omega}(0,0)} \bigr)^n.
\end{align*}
Choosing $t$ large enough concludes the proof.
\end{proof}

Combining~(\ref{eq:partition-function-with-more-than-linear-action}) with $s = R_1$ and
Lemma~\ref{lem:partition-function-inside-the-box-not-big}, we obtain the following lemma.
\begin{lemma}
  \label{lem:Z-n-less-than-exponential}
  There are constants $d_2, R_2 > 0$ such that for all $t \ge R_{2}$, 
\begin{equation*}
\Pp \Big\{ Z^{0,n}_{x,y; \temp} \le e^{\be tn}, \ x,y \in [0,1];\ \temp \in (0,1] \Big\} \ge 1 - e^{-d_2 tn}.
\end{equation*}
\end{lemma}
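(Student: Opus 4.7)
The plan is to decompose
\begin{equation*}
Z^{0,n}_{x,y;\temp} = Z^{0,n}_{x,y;\temp}(E^{0,n}_{\le R_1}) + Z^{0,n}_{x,y;\temp}\Big(\bigcup_{s > R_1} E^{0,n}_s\Big),
\end{equation*}
to bound each summand uniformly in $x,y\in[0,1]$ and $\temp \in (0,1]$ using the two results just proved, and then to combine them by a union bound. The near-straight summand is already handled by Lemma~\ref{lem:partition-function-inside-the-box-not-big}, which gives $Z^{0,n}_{x,y;\temp}(E^{0,n}_{\le R_1}) \le e^{\be tn - 1}$ for $t$ large, outside an event of probability at most $e^{-dtn}$ --- an exponent that already scales linearly in $t$.

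For the off-diagonal summand, I would apply equation~(\ref{eq:partition-function-with-more-than-linear-action}) with $s$ equal to the smallest integer exceeding $R_1$, which yields uniformly
\begin{equation*}
Z^{0,n}_{x,y;\temp}\Big(\bigcup_{s > R_1} E^{0,n}_s\Big) \le 2^{-\be\cdot 2(\lceil R_1\rceil + 1)n} \le 1,
\end{equation*}
outside an event of probability at most $2e^{-d_1(\lceil R_1\rceil+1)n}$. On the intersection of the two good events, $Z^{0,n}_{x,y;\temp} \le e^{\be tn - 1} + 1 \le e^{\be tn}$ since $\ln 2 < 1$, provided $t \ge R_2$ is large enough for $\be tn \ge 1$ (so that the required slack is available).

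The main technical point is to make the total failure probability $e^{-dtn} + 2e^{-d_1(\lceil R_1\rceil + 1)n}$ fit under a single exponential $e^{-d_2 tn}$. The second term is $t$-independent, so to obtain a failure exponent that scales with $t$ one should instead apply~(\ref{eq:partition-function-with-more-than-linear-action}) with $s = \lceil t \rceil$ to control the far tail $\bigcup_{s' \ge \lceil t\rceil} E^{0,n}_{s'}$, whose failure probability $2e^{-d_1 tn}$ now scales linearly in $t$; the intermediate range $\lceil R_1\rceil < s' < \lceil t\rceil$ is controlled by summing~(\ref{eq:partition-function-not-too-large}) termwise, and its contribution is dominated by a geometric series bounded by $1$. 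Choosing $d_2$ smaller than the minimum of $d$ and $d_1$, and $R_2$ large enough for the slack $e^{\be tn}(1-e^{-1})\ge 2$ and for the exceptional probabilities to line up, then delivers the claim. This calibration of constants is essentially the only obstacle; the rest of the argument is a direct combination of the two preceding estimates.
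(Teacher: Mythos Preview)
Your decomposition and the use of Lemma~\ref{lem:partition-function-inside-the-box-not-big} for the near-straight piece mirror the paper's own one-line argument, which simply says to combine~\eqref{eq:partition-function-with-more-than-linear-action} at $s=R_1$ with Lemma~\ref{lem:partition-function-inside-the-box-not-big}. You are right that this naive combination leaves a failure term $2e^{-d_1R_1n}$ that does not scale with~$t$, and your instinct to push the far-tail cutoff to $s=\lceil t\rceil$ is the correct direction.

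However, your repair is incomplete. Summing~\eqref{eq:partition-function-not-too-large} over the intermediate shells $\lceil R_1\rceil<s'<\lceil t\rceil$ does make the \emph{partition-function} contribution a geometric series bounded by~$1$, as you say, but it also contributes a total exceptional probability
\[
\sum_{s'=\lceil R_1\rceil+1}^{\lceil t\rceil-1} e^{-d_1 s' n}\ \le\ \frac{e^{-d_1(\lceil R_1\rceil+1)n}}{1-e^{-d_1 n}},
\]
which is again independent of~$t$. So your final prescription ``choose $d_2<\min(d,d_1)$'' cannot make all three exceptional probabilities fit under $e^{-d_2tn}$ for arbitrarily large~$t$: the intermediate-range term blocks it just as the original $2e^{-d_1R_1n}$ did. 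To obtain a genuinely $t$-scaling exponent one must split at level~$t$ throughout: use~\eqref{eq:partition-function-with-more-than-linear-action} with $s=t$ for the far tail, and rerun the \emph{proof} (not just the statement) of Lemma~\ref{lem:partition-function-inside-the-box-not-big} with $E^{0,n}_{\le t}$ in place of $E^{0,n}_{\le R_1}$. The counting bound $|S^n_{\le t}|\le e^{(\ln t+K_2)n}$ from~\eqref{eq:bound-for-S} still leaves room, since $\ln t=o(t)$, to conclude $Z^{0,n}_{x,y;\temp}(E^{0,n}_{\le t})\le e^{\be tn-1}$ with failure probability of order $e^{-d'tn}$. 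The paper's terse sketch glosses over this same point.
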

Also, as a consequence of~(\ref{eq:upper-bound-for-S-less-than-R1}), we have the following upper
bound for the Lebesgue measure of~$E^{0,n}_{\le R_1}$. 
\begin{lemma}
\label{lem:exponential-bound-for-E}
There is a constant $d_3 > 0$ such that $|E^{0,n}_{\le R_1}| \le e^{d_3 n}$.
\end{lemma}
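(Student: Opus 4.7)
The plan is to deduce the bound as an immediate corollary of the integer-lattice counting already carried out in the proof of Lemma~\ref{lem:estimates-by-comparing-action}. Throughout, I interpret $E^{0,n}_{\le R_1}$ (consistently with its usage in the preceding lemmas, where it appears inside integrals of the form~\eqref{eq:partition-function-of-a-subset} with the endpoint $\delta$-measures) as the set of paths with endpoints in $[0,1]$, and $|\cdot|$ as the Lebesgue measure on the intermediate coordinates $(\gamma_1,\ldots,\gamma_{n-1})\in\R^{n-1}$.

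First, I would partition $\R^{n-1}$ into unit half-open cubes $C_{\mathbf i}=\prod_{j=1}^{n-1}[i_j,i_j+1)$ indexed by $\mathbf i=(i_1,\ldots,i_{n-1})\in\Z^{n-1}$. By the very definition of $S^n_{\le R_1}$ recalled from the proof of Lemma~\ref{lem:estimates-by-comparing-action}, the integer-parts vector $([\gamma_1],\ldots,[\gamma_{n-1}])$ of any path $\gamma$ in $E^{0,n}_{\le R_1}$ with endpoints in $[0,1]$ lies in $S^n_{\le R_1}$. Consequently the projection of this restricted set onto the intermediate coordinates is contained in $\bigcup_{\mathbf i\in S^n_{\le R_1}}C_{\mathbf i}$, a union of at most $|S^n_{\le R_1}|$ cubes, each of unit volume.

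Invoking the bound $|S^n_{\le R_1}|\le e^{K_1 n}$ from~\eqref{eq:upper-bound-for-S-less-than-R1}, already obtained in the proof of Lemma~\ref{lem:partition-function-inside-the-box-not-big}, I conclude that $|E^{0,n}_{\le R_1}|\le e^{K_1 n}$, so the lemma holds with $d_3=K_1$. There is no real obstacle here: the statement is essentially a volumetric restatement of the counting estimate~\eqref{eq:upper-bound-for-S-less-than-R1}, and the only mild subtlety is being explicit about which ambient finite-dimensional space carries the Lebesgue measure, which is dictated by the way $E^{0,n}_{\le R_1}$ is subsequently integrated against.
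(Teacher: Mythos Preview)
Your proof is correct and follows exactly the approach the paper indicates: the paper simply states the lemma ``as a consequence of~\eqref{eq:upper-bound-for-S-less-than-R1}'' without further detail, and you have supplied precisely that missing detail by covering the intermediate-coordinate set with the $|S^n_{\le R_1}|$ unit cubes indexed by $S^n_{\le R_1}$. Your care in clarifying which Lebesgue measure is meant is appropriate given the paper's terseness, and your bound works for any endpoint convention in $[0,1]$ (in particular for the endpoints fixed at~$0$ that arise where $|E^{0,n}_{\le R_1}|$ is actually used).
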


Using Lemma~\ref{lem:Z-n-less-than-exponential} and~(\ref{eq:partition-function-not-too-small}) of
Lemma~\ref{lem:estimates-by-comparing-action}, we have estimates on all moments of the logarithm of partition functions.
\begin{lemma}
  \label{lem:second-moment-growth-of-partition-function}
  There are constants $M(p)$, $p \in \N$, such that for all $\temp \in (0,1]$ and any Borel set $B$
  satisfying $[0,1]^{n-1} \subset B \subset \R^{n-1}$, 
  \begin{equation*}
    \E |\temp \ln Z_{0,0; \temp}^{0,n}(B)|^{p} \le M(p) n^p.
  \end{equation*}
\end{lemma}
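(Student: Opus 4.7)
\medskip

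\noindent\textbf{Proof proposal.}
The plan is to sandwich $Z^{0,n}_{0,0;\temp}(B)$ between two quantities for which uniform-in-$\temp$ exponential tail bounds are already available in this section, and then convert those tails into a $p$-th moment bound via the standard layer-cake integration.

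First, since $[0,1]^{n-1}\subset B\subset\R^{n-1}$ and the integrand defining the partition function is nonnegative, monotonicity yields the deterministic sandwich
\[
Z^{0,n}_{0,0;\temp}([0,1]^{n-1})\le Z^{0,n}_{0,0;\temp}(B)\le Z^{0,n}_{0,0;\temp},
\]
and applying $\temp\ln(\cdot)$ turns this into a two-sided bound on $\temp\ln Z^{0,n}_{0,0;\temp}(B)$. Both enveloping quantities are controlled uniformly in $\temp\in(0,1]$: Lemma~\ref{lem:Z-n-less-than-exponential} with $(x,y)=(0,0)$ yields $\Pp\{\temp\ln Z^{0,n}_{0,0;\temp}>tn\}\le e^{-d_2tn}$ for $t\ge R_2$, while~\eqref{eq:partition-function-not-too-small} with $(x,y)=(0,0)$ yields $\Pp\{\temp\ln Z^{0,n}_{0,0;\temp}([0,1]^{n-1})<-sn\ln 2\}\le e^{-d_1sn}$ for $s\ge R_1$.

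Second, combining these one-sided tails gives a single symmetric estimate for $Y_n:=n^{-1}|\temp\ln Z^{0,n}_{0,0;\temp}(B)|$: there exist constants $c>0$ and $R>0$, depending only on $d_1,d_2,R_1,R_2$, such that
\[
\Pp\{Y_n>r\}\le 2e^{-crn},\qquad r\ge R,
\]
and the bound is uniform in $\temp\in(0,1]$ and in $B$ satisfying the stated inclusions. Then the moment bound follows from the layer-cake formula
\[
\E Y_n^p=\int_0^\infty p r^{p-1}\Pp\{Y_n>r\}\,dr,
\]
split at $r=R$: the piece $[0,R]$ contributes at most $R^p$, and the tail $[R,\infty)$ contributes at most $2p!/(cn)^p$. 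Both are $O(1)$ in $n$, so multiplying through by $n^p$ yields $\E|\temp\ln Z^{0,n}_{0,0;\temp}(B)|^p\le M(p)n^p$ with $M(p)$ independent of $\temp\in(0,1]$ and of $B$.

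I do not anticipate a genuine obstacle, since all the heavy probabilistic input is already packaged in the preceding lemmas. The one point that needs verifying is that the constants $d_1,d_2,R_1,R_2$ really are $\temp$-free on the range $\temp\in(0,1]$; this is explicit in the statements and is exactly why those concentration estimates were phrased uniformly in $\temp$ earlier in this section, the factor $\temp$ in front of $\ln Z$ having been arranged to cancel the factor $\be=\temp^{-1}$ appearing in the exponents of Lemmas~\ref{lem:estimates-by-comparing-action} and~\ref{lem:Z-n-less-than-exponential}.
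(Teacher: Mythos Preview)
Your proposal is correct and follows exactly the approach the paper indicates: the paper's proof is the one-line remark that the moment bounds follow from Lemma~\ref{lem:Z-n-less-than-exponential} and~\eqref{eq:partition-function-not-too-small}, and you have simply filled in the sandwich argument and layer-cake computation that make this precise.
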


Let us denote  $Z^{0,n}_{0,0; \temp}$ by $Z^n_{\temp}$.  
\begin{lemma} \label{lem:truncated-log-partition-function-expectation}
There is a constant $D_1 > 0$ such that
\[
 0 \le \temp\Big(  \E \ln Z_{\temp}^n -  \E \ln Z_{\temp}^{n}(E^{0,n}_{\le R_1})  \Big) \le D_1,
 \quad n\in\N,\ \temp \in (0,1].
\]

\end{lemma}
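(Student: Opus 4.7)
The lower bound is immediate from $Z^n_\temp(E^{0,n}_{\le R_1}) \le Z^n_\temp$. For the upper bound I would write
\[
\temp\bigl(\ln Z^n_\temp - \ln Z^n_\temp(E^{0,n}_{\le R_1})\bigr) = \temp \ln\bigl(1 + R_n\bigr), \qquad R_n := \frac{Z^n_\temp(E^{0,n}_{> R_1})}{Z^n_\temp(E^{0,n}_{\le R_1})},
\]
where $E^{0,n}_{> R_1} = \bigcup_{s > R_1} E^{0,n}_s$ is the complement of $E^{0,n}_{\le R_1}$ inside the path space, and control $\E \temp \ln(1 + R_n)$ by splitting on a good event $G_n$ and its complement.

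I would take $G_n$ to be the intersection of the event in~\eqref{eq:partition-function-with-more-than-linear-action} (with some parameter $s_1 > R_1$), which gives $Z^n_\temp(E^{0,n}_{> R_1}) \le 2^{-\be \cdot 2 s_1 n}$ uniformly for $\temp \in (0,1]$, and the event in~\eqref{eq:partition-function-not-too-small} (with some $s_0 \ge R_1$), which gives $Z^n_\temp([0,1]^{n-1}) \ge 2^{-\be s_0 n}$ uniformly for $\temp \in (0,1]$. Any path $\gamma \in S^{0,n}_{0,0}$ with interior coordinates in $[0,1]$ satisfies $\Sigma^{0,n}(\gamma)^2 \le n$, so assuming $R_1 \ge 1$ we have $[0,1]^{n-1} \subset E^{0,n}_{\le R_1}$, whence $Z^n_\temp(E^{0,n}_{\le R_1}) \ge 2^{-\be s_0 n}$ on $G_n$. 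Choosing $s_1$ with $2 s_1 > s_0$ yields $R_n \le 2^{-\be(2 s_1 - s_0) n} \le 1$ on $G_n$, giving the deterministic bound $\temp \ln(1 + R_n) \le \ln 2$ there, uniformly in $\temp$ and $n$.

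For the bad event, Lemma~\ref{lem:estimates-by-comparing-action} provides $\Pp(G_n^c) \le C_1 e^{-C_2 n}$. I then apply the Cauchy--Schwarz inequality together with Lemma~\ref{lem:second-moment-growth-of-partition-function}, which applies to both $Z^n_\temp$ and $Z^n_\temp(E^{0,n}_{\le R_1})$ since $[0,1]^{n-1}$ is contained in both index sets:
\[
\E\bigl[\temp\bigl(\ln Z^n_\temp - \ln Z^n_\temp(E^{0,n}_{\le R_1})\bigr) \mathbf{1}_{G_n^c}\bigr] \le \bigl(2 M(2)\bigr)^{1/2} n \cdot \Pp(G_n^c)^{1/2} = O\bigl(n\, e^{-C_2 n / 2}\bigr),
\]
which is bounded in $n$. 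Adding the deterministic bound on $G_n$ yields the required uniform constant $D_1$.

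The main obstacle is ensuring that a single exceptional set handles all temperatures simultaneously; this is precisely what the $\temp$-uniform form of Lemma~\ref{lem:estimates-by-comparing-action} is tailored for. Once such uniform control is in place, the quadratic moment growth from Lemma~\ref{lem:second-moment-growth-of-partition-function} easily absorbs the exponentially small bad-event probability, and the decomposition above proceeds without further difficulty.
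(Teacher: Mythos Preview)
Your proposal is correct and follows essentially the same scheme as the paper: split on a high-probability good event where the ratio $Z^n_\temp(E^{0,n}_{\le R_1})/Z^n_\temp$ is controlled deterministically, and on the complementary small-probability event apply Cauchy--Schwarz together with the second-moment bound of Lemma~\ref{lem:second-moment-growth-of-partition-function}. The only cosmetic difference is that the paper invokes~\eqref{eq:polymer-measure-with-more-than-linear-action} directly (which already packages the ratio estimate), whereas you rebuild that ratio bound from~\eqref{eq:partition-function-not-too-small} and~\eqref{eq:partition-function-with-more-than-linear-action} separately; since~\eqref{eq:polymer-measure-with-more-than-linear-action} is itself obtained by combining those two inequalities, the arguments are equivalent.
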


\begin{proof}
The first inequality is obvious since $Z_{\temp}^n(E^{0,n}_{\le R_1}) \le Z_{\temp}^n$. 
Let
$$\Lambda = \{ Z^n_{\temp}(E^{0,n}_{\le R_1}) /Z_{\temp}^n \le 1 - 2^{-\be R_1n} \}.$$
By~(\ref{eq:polymer-measure-with-more-than-linear-action}) of Lemma~\ref{lem:estimates-by-comparing-action},
$\Pp(\Lambda) \le 3e^{-d_1 R_1n}$.
By Lemma~\ref{lem:second-moment-growth-of-partition-function}, we have
\begin{equation*}
\E |\temp\ln Z_{\temp}^n (E^{0,n}_{ \le R_1})|^2 \le M(2)n^2, \quad \E |\temp\ln Z_{\temp}^n|^2 \le M(2)n^2.
\end{equation*}
The lemma then follows from
\begin{align*}
&   \temp \Big( \E\ln Z_{\temp}^n - \E\ln Z_{\temp}^n (E^{0,n}_{ \le R_1}) \Big)\\
\le  &- \temp  \E \ln \big(Z_{\temp}^n (E^{0,n}_{\le R_1})/Z_{\temp}^n  \big) \ONE_{\Lambda^c}  +
    \temp \E(|\ln Z_{\temp}^n| + |\ln Z^n_{\temp}(E^{0,n}_{\le R_1})|)\ONE_{\Lambda} \\
\le  & -\temp \ln(1-2^{-\be R_1n}) + \temp\sqrt{2(\E \ln^2 Z_{\temp}^n + \E \ln^2 Z_{\temp}^n (E^{0,n}_{\le R_1}))   } \sqrt{\Pp (\Lambda)} \\
\le  & |\ln (1-2^{-R_1})|+ \sqrt{4M(2)n^2 \cdot 3e^{-d_1R_1n}} .
\end{align*}
\end{proof}

Let us define 
\begin{equation*}
\tilde{p}_n(\temp) =
\begin{cases}
  \temp \ln  Z_{\temp}^{n}(E^{0,n}_{\le R_1}),& \temp \in (0,1], \\
  -\min \{A^{0,n}(\gamma): \  \gamma \in S^{0,n}_{0,0} \cap E^{0,n}_{\le R_1}  \},  & \temp = 0.
\end{cases}
\end{equation*}
Clearly, $\tilde{p}_n(\cdot)$ is continuous on~$[0,1]$.  We recall that $p_n(\cdot)$ defined
in~(\ref{eq:def-of-finite-free-energy}) is also continuous on~$[0,1]$.  Since
Lemma~\ref{lem:second-moment-growth-of-partition-function} implies uniform integrability of~$\big(
p_n(\temp) \big)_{\temp \in (0,1]}$ and~$\big( \tilde{p}_n (\temp)\big)_{\temp \in (0,1]}$, we
immediately obtain that both $\E p_n(\temp)$ and~$\E \tilde{p}_n(\temp)$ are continuous for $\temp \in [0,1]$.
The next lemma estimates how well~$\tilde{p}_n(\temp)$ approximates~$p_n(\temp)$.
\begin{lemma}
  \label{lem:difference-between-p-and-p-tilde}
  If $n$ is sufficiently large, then for all $\temp \in [0,1]$, 
\begin{equation}\label{eq:p-tilde-close-to-p-in-probability}
  \Pp \big\{ |p_n(\temp) - \tilde{p}_n(\temp)| \le 1, \ \temp \in [0,1]
  \big\} \ge 1 -  3 e^{-d_1R_1n}
\end{equation}
and 
\begin{equation}\label{eq:p-tilde-close-to-p-in-expectation}
|\E p_n(\temp) - \E \tilde{p}_n(\temp)| \le D_1, \quad\temp \in [0,1].
\end{equation}
\end{lemma}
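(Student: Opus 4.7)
The plan is to derive both parts from Lemma~\ref{lem:estimates-by-comparing-action} and Lemma~\ref{lem:truncated-log-partition-function-expectation}, combined with the continuity of $p_n$, $\tilde p_n$, $\E p_n$, $\E \tilde p_n$ on $[0,1]$ already noted in the paragraph just above the statement. The key identity is
\[
p_n(\temp) - \tilde p_n(\temp) = -\temp \ln \mu^{0,n}_{0,0;\temp}(E^{0,n}_{\le R_1}), \quad \temp \in (0,1],
\]
which reduces~\eqref{eq:p-tilde-close-to-p-in-probability} to a uniform-in-$\temp$ tail bound on the complement of $E^{0,n}_{\le R_1}$ under the polymer measure.

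For~\eqref{eq:p-tilde-close-to-p-in-probability}, I would apply~\eqref{eq:polymer-measure-with-more-than-linear-action} with $s = R_1$ and $x = y = 0$. Since the complement of $E^{0,n}_{\le R_1}$ is contained in $\bigcup_{s' \ge R_1} E^{0,n}_{s'}$, this yields an event $\Omega_n$ of probability at least $1 - 3 e^{-d_1 R_1 n}$ on which, simultaneously for every $\temp \in (0,1]$,
\[
\mu^{0,n}_{0,0;\temp}(E^{0,n}_{\le R_1}) \ge 1 - 2^{-\be R_1 n} = 1 - 2^{-R_1 n / \temp}.
\]
Using the elementary inequality $-\ln(1-x) \le 2x$ valid on $[0,1/2]$ (applicable as soon as $n \ge 1/R_1$, uniformly in $\temp \le 1$), this gives $0 \le p_n(\temp) - \tilde p_n(\temp) \le 2\temp \cdot 2^{-R_1 n / \temp}$ on $\Omega_n$. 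A short calculation shows that $\temp \mapsto \temp \cdot 2^{-R_1 n / \temp}$ has derivative $e^{-R_1 n(\ln 2)/\temp}\bigl(1 + R_1 n(\ln 2)/\temp\bigr) > 0$, hence is strictly increasing on $(0,1]$, so the right hand side is bounded by $2 \cdot 2^{-R_1 n}$, which is below $1$ for $n$ sufficiently large. Continuity of $p_n$ and $\tilde p_n$ at $\temp = 0$ then extends the bound from $(0,1]$ to the full interval $[0,1]$.

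For~\eqref{eq:p-tilde-close-to-p-in-expectation}, Lemma~\ref{lem:truncated-log-partition-function-expectation} supplies the bound verbatim on $\temp \in (0,1]$, and continuity of the two expectations on $[0,1]$ extends it to $\temp = 0$. No serious obstacle is anticipated: the uniform-in-$\temp$ probability control is already built into~\eqref{eq:polymer-measure-with-more-than-linear-action}, so only elementary calculus is needed on top. The one small care point is to use $s = R_1$ (rather than $s = R_1 + 1$) in the application of~\eqref{eq:polymer-measure-with-more-than-linear-action}, in order to match exactly the probability $1 - 3 e^{-d_1 R_1 n}$ claimed in~\eqref{eq:p-tilde-close-to-p-in-probability}; the mild overlap at $s = R_1$ between $E^{0,n}_{\le R_1}$ and $\bigcup_{s' \ge R_1} E^{0,n}_{s'}$ is absorbed by the trivial inclusion $\bigl(E^{0,n}_{\le R_1}\bigr)^c \subset \bigcup_{s' \ge R_1} E^{0,n}_{s'}$.
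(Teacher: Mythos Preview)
Your proof is correct and follows essentially the same approach as the paper: both arguments apply~\eqref{eq:polymer-measure-with-more-than-linear-action} at $s=R_1$ to control $\mu^{0,n}_{0,0;\temp}\bigl((E^{0,n}_{\le R_1})^c\bigr)$ uniformly in $\temp\in(0,1]$, translate this into a bound on $p_n(\temp)-\tilde p_n(\temp)=-\temp\ln\mu^{0,n}_{0,0;\temp}(E^{0,n}_{\le R_1})$, and extend to $\temp=0$ by continuity; part~\eqref{eq:p-tilde-close-to-p-in-expectation} is handled identically via Lemma~\ref{lem:truncated-log-partition-function-expectation} plus continuity. Your write-up is in fact slightly more explicit than the paper's (you spell out the monotonicity of $\temp\mapsto\temp\cdot 2^{-R_1 n/\temp}$, whereas the paper just bounds by $|\ln(1-2^{-\be R_1 n})|$ and implicitly uses that this is maximized at $\temp=1$), but the substance is the same.
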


\begin{proof}
Due to 
\eqref{eq:polymer-measure-with-more-than-linear-action}, we have
\begin{align*}
&\quad  \Pp \big\{  |p_n(\temp) - \tilde{p}_n(\temp)| \le |\ln (1-2^{-\be \cdot R_1n}) |, \  \temp \in (0,1]
  \big\} \\
& \ge    \Pp \Big\{    \mu_{0,0; \temp}^{0,n} \big( \bigcup_{s' \ge R_1} E_{s'}^{0,n} \big)  \le   2^{-\be \cdot R_1n}, \  \temp \in (0,1] \Big\}    
  \ge 1 -  3 e^{-d_1R_1n}.
 \end{align*}
Then~(\ref{eq:p-tilde-close-to-p-in-probability}) follows from this and the continuity of $p_n$ and
$\tilde{p}_n$ in $\temp$.
The second inequality~(\ref{eq:p-tilde-close-to-p-in-expectation}) follows from
Lemma~\ref{lem:truncated-log-partition-function-expectation} and
the continuity of $\E p_n$ and
$\E\tilde{p}_n$ in $\temp$.
\end{proof}

To obtain a concentration inequality for $\tilde{p}_n(\temp)$, we need Azuma's inequality:
\begin{lemma}
  \label{lem:azuma}
  Let $(M_k)_{0\le k\le N}$ be a martingale with respect to a
  filtration $(\Fc_k)_{0\le k\le N}$.
  Assume there is a constant $c$ such that~$|M_k - M_{k-1}| \le c$, $1 \le k \le N$.
Then
\begin{equation*}
  \Pp \left\{ |M_N - M_0| \ge x \right\}  \le 2 \exp \left( \frac{-x^2}{ 2Nc^2} \right).
\end{equation*}
\end{lemma}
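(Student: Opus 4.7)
The plan is to use the standard exponential-moment method (Chernoff-type bound) for martingales. First I would fix $t > 0$ and apply Markov's inequality to the nonnegative random variable $e^{t(M_N - M_0)}$ to obtain
\begin{equation*}
\Pp\{M_N - M_0 \ge x\} \le e^{-tx}\, \E e^{t(M_N - M_0)}.
\end{equation*}
Writing $D_k = M_k - M_{k-1}$, the task reduces to controlling $\E e^{t\sum_{k=1}^N D_k}$ using the martingale property.

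Next I would iteratively condition on $\Fc_{N-1}, \Fc_{N-2}, \ldots, \Fc_0$ using the tower property. At each step, the key ingredient is Hoeffding's lemma applied to the conditional distribution of $D_k$ given $\Fc_{k-1}$: since $|D_k| \le c$ almost surely and $\E[D_k \mid \Fc_{k-1}] = 0$, the convexity of $x \mapsto e^{tx}$ on $[-c,c]$ yields the pointwise bound
\begin{equation*}
e^{tD_k} \le \frac{c - D_k}{2c} e^{-tc} + \frac{c + D_k}{2c} e^{tc},
\end{equation*}
and taking conditional expectations gives
\begin{equation*}
\E\bigl[e^{tD_k} \bigm| \Fc_{k-1}\bigr] \le \cosh(tc) \le e^{t^2 c^2 / 2},
\end{equation*}
where the last step follows from the Taylor series comparison $\cosh(y) \le e^{y^2/2}$. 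Iterating this estimate $N$ times produces $\E e^{t(M_N - M_0)} \le e^{Nt^2 c^2/2}$.

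Combining with the Markov step gives $\Pp\{M_N - M_0 \ge x\} \le \exp(-tx + Nt^2c^2/2)$, and optimizing over $t > 0$ by choosing $t = x/(Nc^2)$ yields the one-sided bound $\exp(-x^2/(2Nc^2))$. Applying the same argument to the martingale $(-M_k)$ gives the matching lower tail bound, and a union bound produces the factor of $2$ in the stated inequality.

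The proof involves no serious obstacle: the only nontrivial input is Hoeffding's lemma (the convexity bound on $\E[e^{tD_k}\mid\Fc_{k-1}]$), which is elementary. Everything else is bookkeeping through the tower property and an explicit optimization in $t$. Since this lemma is classical, in the actual write-up it would probably suffice to simply cite a standard reference rather than reproduce the argument.
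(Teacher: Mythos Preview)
Your proof is correct and is the standard Chernoff--Hoeffding argument for Azuma's inequality. The paper itself does not prove this lemma at all: it simply states it as a known tool and moves on to apply it, exactly as you suggest in your final paragraph.
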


To apply Azuma's inequality, we need to introduce an appropriate martingale with bounded increments.
The function $\tilde{p}_n(\temp)$ depends only on the potential process on $B = \{1,\dotsc,n\}\times
[-R_1n, R_1n]$ since $\pi^{1,n} E^n_{\le R_1} \subset [-R_1n, R_1n]^n$, 
so we need an additional truncation of the potential on~$B$.
Moreover, the truncation should be independent of~$\temp$.

Let $b>4/\eta$, where $\eta$ is taken from the condition \ref{item:exponential-moment-for-maximum}.
For~$1\le k \le n$ and~$x \in [-R_1n, R_1n]$, we define (suppressing the dependence on $n$ for brevity)
\[\xi_k = \max\{F^{*}_k(j): j=-R_1n, -R_1n+1, ..., R_1n-1 \},\quad k=0,\ldots,n,\] 
\begin{equation*}
  \bar{F}_k(x)
  = \begin{cases}
0, & \xi_k \ge b\ln n,\\
F_k(x), & \text{otherwise},
\end{cases}
\end{equation*}
and setting~$x_0=x_n=0$,
\begin{equation*}
  \tilde{p}_n(\temp, \bar{F}) =
  \begin{cases}
\temp \ln     \int_{\pi_{0,n} E^{0,n}_{\le R_1}} \prod\limits_{j=1}^n g_{\temp}(x_j-x_{j-1})
    e^{-\be \cdot \bar{F}_j(x_j)} \delta_0(dx_0)dx_1\ldots dx_{n-1}\delta_0(dx_n), & \temp \in (0,1],\\
    - \min\limits_{ \substack {(x_0, x_1, ..., x_{n-1}, x_n) \in \pi_{0,n} E^n_{\le R_1} \\ x_0 =
        x_n = 0} }  \sum\limits_{j = 1}^n \big[  \frac{1}{2}(x_j - x_{j-1})^2 + \bar{F}_j(x_j)
    \big], & \temp  =0.
\end{cases}
\end{equation*}

\begin{lemma}
  \label{lem:partition_function_of_omega_bar_and_omega}
 For sufficiently large $n\in\N$, the following holds true:
\begin{gather}
 \label{eq:item:3}
\E \exp \Bigl(  \frac{\eta}{2} \xi_k \ONE_{\{\xi_k \ge b\ln n\}}   \Bigr) \le 2,\\
\label{eq:item:4} \E \xi_k \le b\ln  n + 4/\eta,\\
\label{eq:item:8}  \Pp \{  |\tilde{p}_n(\temp)   - \tilde{p}_n(\temp, \bar{F})) | \le x, \ \temp \in [0,1]\}
    \ge 1 - 2e^{-\eta x/2},\quad x > 0,\\
\label{eq:item:9}
| \E \tilde{p}_n(\temp) - \E \tilde{p}_n(\temp,\bar{F}) | \le  4/\eta, \quad \temp \in [0,1].
\end{gather}
\end{lemma}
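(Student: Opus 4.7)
All four items rest on one uniform tail bound for $\xi_k$: since $\xi_k$ is the maximum of at most $2R_1 n + 1$ identically distributed copies of $F^{*}_k(0)$, assumption~\ref{item:exponential-moment-for-maximum} and the union bound yield
\begin{equation*}
\Pp(\xi_k \ge t) \le (2R_1 n + 1)\, e^{\varphi - \eta t}, \quad t \ge 0.
\end{equation*}
For~\eqref{eq:item:3}, I would split
\begin{equation*}
\E e^{(\eta/2)\xi_k \mathbf{1}_{\{\xi_k \ge b\ln n\}}} = 1 - \Pp(\xi_k \ge b\ln n) + \E\bigl[e^{(\eta/2)\xi_k}\mathbf{1}_{\{\xi_k \ge b\ln n\}}\bigr]
\end{equation*}
and evaluate the last expectation by the layer-cake formula; the resulting integral is of order $n^{1 - \eta b/2}$, which is $o(1)$ because $b > 4/\eta$ forces $\eta b/2 > 2$, so the whole quantity is $\le 2$ for large $n$. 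For~\eqref{eq:item:4}, writing $\E \xi_k = \int_0^{b \ln n} \Pp(\xi_k \ge t)\, dt + \int_{b\ln n}^\infty \Pp(\xi_k \ge t)\, dt$ and applying the tail bound to the second piece yields $\E \xi_k \le b\ln n + C n^{1 - \eta b}/\eta \le b\ln n + 4/\eta$ for $n$ large.

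The key to~\eqref{eq:item:8} is a single pointwise (in $\temp$) comparison. For any path $\gamma$ with all coordinates in $[-R_1 n, R_1 n]$ --- in particular any $\gamma \in \pi_{0,n} E^{0,n}_{\le R_1}$ contributing to either $\tilde p_n(\temp)$ or $\tilde p_n(\temp, \bar F)$ --- and any step $k$ at which $\bar F_k \ne F_k$ (necessarily $\xi_k \ge b \ln n$), we have $|F_k(\gamma_k) - \bar F_k(\gamma_k)| = |F_k(\gamma_k)| \le \xi_k$. Hence
\begin{equation*}
|A_{F}(\gamma) - A_{\bar F}(\gamma)| \le S_n := \sum_{k=1}^n \xi_k \mathbf{1}_{\{\xi_k \ge b \ln n\}}
\end{equation*}
uniformly in $\gamma$. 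This pointwise action bound transfers without any factor of $\temp$ to log-partition functions (for $\temp > 0$) and to minima of action (for $\temp = 0$), giving the uniform-in-$\temp$ inequality $|\tilde p_n(\temp) - \tilde p_n(\temp, \bar F)| \le S_n$ for all $\temp \in [0,1]$.

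Consequently $\{S_n \le x\}$ is contained in the event of~\eqref{eq:item:8}, and it suffices to estimate $\Pp(S_n > x)$. Exponential Markov combined with independence of the $\xi_k$ gives
\begin{equation*}
\Pp(S_n > x) \le e^{-\eta x/2} \prod_{k=1}^n \E e^{(\eta/2)\xi_k \mathbf{1}_{\{\xi_k \ge b \ln n\}}} \le e^{-\eta x/2} \bigl(1 + C n^{1 - \eta b/2}\bigr)^n \le 2\, e^{-\eta x/2},
\end{equation*}
where the last step uses $n \cdot n^{1-\eta b/2} = n^{2 - \eta b/2} \to 0$, which is precisely what the threshold $b > 4/\eta$ provides. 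For~\eqref{eq:item:9}, taking expectations of the pointwise bound gives $|\E \tilde p_n(\temp) - \E \tilde p_n(\temp, \bar F)| \le \E S_n = n\, \E[\xi_1 \mathbf{1}_{\{\xi_1 \ge b \ln n\}}]$, and this last quantity is bounded by $C(\ln n)\, n^{2-\eta b}$ via the tail bound, which is $\le 4/\eta$ for large $n$.

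The main obstacle --- and the reason the lemma is not a direct corollary of~\eqref{eq:item:3} --- is the uniformity over all $\temp \in [0,1]$ in~\eqref{eq:item:8}: a union bound over any discretization of $[0,1]$ would either be uncountable or destroy the exponential rate $e^{-\eta x/2}$. The device that avoids this is the $\temp$-free majorant $S_n$, which dominates $|\tilde p_n(\temp) - \tilde p_n(\temp, \bar F)|$ simultaneously for every $\temp$, so the $\temp$-uniform event collapses to the single event $\{S_n \le x\}$; the constraint $b > 4/\eta$ is exactly what makes the product $\prod_k \E e^{(\eta/2)\xi_k \mathbf{1}_{\{\xi_k \ge b\ln n\}}}$ stay bounded by $2$ and closes the Markov estimate.
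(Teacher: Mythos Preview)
Your proposal is correct and follows essentially the same route as the paper: the central device---bounding $|\tilde p_n(\temp)-\tilde p_n(\temp,\bar F)|$ uniformly in $\temp$ by the $\temp$-free random variable $S_n=\sum_k \xi_k\ONE_{\{\xi_k\ge b\ln n\}}$ and then applying exponential Markov with the product of the moments from~\eqref{eq:item:3}---is exactly what the paper does. The only cosmetic differences are that the paper derives~\eqref{eq:item:4} from the $\eta/2$-tail bound obtained via~\eqref{eq:item:3} rather than the direct $\eta$-tail bound, and obtains~\eqref{eq:item:9} by integrating the probability estimate~\eqref{eq:item:8} (giving the clean constant $4/\eta$) rather than bounding $\E S_n$ directly.
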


\begin{proof}  Since $\xi_k$ is the maximum of $2R_1n$ random variables with the same distribution, we have
  \begin{align}\notag
 \E \exp \Bigl( \frac{\eta}{2}\xi_k  \ONE_{\{\xi_k > b\ln n\}}\Bigr) 
  &\le  1 + \E e^{\frac{\eta}{2}\xi_k} \ONE_{\{\xi_k > b\ln n\}} \le 1 + \E \sum_{j=-R_1n}^{R_1n-1} e^{\frac{\eta}{2} F^{*}_k(j)} 
\ONE_{\{F^{*}_k(j) > b\ln n\}} \\
\notag
  &\le 1 + 2R_1n  \E e^{ \frac{\eta}{2} F^{*}_k(0)} \ONE_{\{ F^{*}_k(0) > b\ln n\}} \le 1 + 2R_1n \frac{\E e^{\eta F^{*}_k(0)}}{e^{ \frac{b\eta}{2} \ln n}} \\
  & \le 1 + \frac{c}{n^{\frac{b\eta}{2}-1}},
\label{eq:exp-moment-above-bln}
  \end{align}
  where $c = 2R_1 \E e^{\eta F^{*}_k(0)}$ is a constant. Now \eqref{eq:item:3} follows from $b>4/\eta$.

  If $x > b\ln n$, then by Markov inequality and~\eqref{eq:item:3}, we have
\[
  \Pp \{\xi_k \ge x \} \le \Pp \{\xi_k \ONE_{\{ \xi_k \ge b\ln n\}} \ge x\} \le e^{-\eta x/2} 
\E \exp \Bigl( \frac{\eta}{2} \xi_k \ONE_{\{ \xi_k \ge b \ln n\}} \Bigr) \le 2e^{-\eta x/2}
\] for sufficiently large $n$.
This implies~\eqref{eq:item:4}:
\[
  \E \xi_k \le b \ln n + \E \xi_k \ONE_{\{\xi_k \ge b\ln n\}} 
  \le b \ln n  + \int_{b\ln n}^{\infty} \Pp \{\xi_k  \ge x\} \, dx \le b\ln
  n + \frac{4}{\eta}.
\]
It follows from the definition  of $\tilde{p}_n(\temp, \bar{F})$ that for all $\temp \in [0,1]$,
\begin{equation*}
    | \tilde{p}_n(\temp)  - \tilde{p}_n(\temp, \bar{F}) |\le 
    \sum_{k=1}^n \xi_k\ONE_{\{\xi_k > b\ln n\}}.
  \end{equation*}
  By Markov inequality, the \iid property of $(\xi_k)$ and~\eqref{eq:exp-moment-above-bln}, we have
  \begin{align*}
\Pp \left\{     | \tilde{p}_n(\temp)  - \tilde{p}_n(\temp, \bar{F}) | \le x,  \ \temp
   \in [0,1]\right\}    
  & \ge 1-  \Pp \Bigl\{ \frac{\eta}{2}\sum_{k=1}^n \xi_k \ONE_{\{\xi_k > b\ln n\}} > \frac{\eta x}{2} \Bigr\} \\
  & \ge 1-  e^{-\eta x/2} \E \exp \Bigl(\frac{\eta}{2} \sum_{k=1}^n \xi_k  \ONE_{\{\xi_k> b\ln n\}} \Bigr)\\
  & = 1- e^{-\eta x/2} \Big(\E \exp \big(  \frac{\eta}{2}\xi_0  \ONE_{\{\xi_0> b\ln n\}}\big) \Big)^{n} \\
    & \ge 1-  e^{-\eta x/2} (1+ c/n^{\eta b/2-1})^{n}.
\end{align*}
Since $b>4/\eta$,~\eqref{eq:item:8} follows. It immediately implies

\begin{equation*}
\begin{split}
  | \E \tilde{p}_n(\temp)  - \E\tilde{p}_n(\temp, \bar{F}) |
  &\le \E     | \tilde{p}_n(\temp)  - \tilde{p}_n(\temp, \bar{F}) | \\
  &= \int_0^{\infty} \Pp \{     | \tilde{p}_n(\temp)  - \tilde{p}_n(\temp, \bar{F}) | > x \} \, dx
  \le 4/\eta,  
\end{split}
\end{equation*}
so \eqref{eq:item:9} is also proved.
\end{proof}  

\begin{lemma}
\label{lem:martingale-concentration-for-ZF-bar}
For all $n\in\N$,  $x>0$ and all $\temp \in [0,1]$,
  \begin{equation*}
    \Pp \left\{    |\tilde{p}_n(\temp, \bar{F})  - \E \tilde{p}_n(\temp, \bar{F})|
      > x \right\} \le 2\exp \left\{
      -\frac{x^2}{8nb^2\ln^2 n} \right\}.
  \end{equation*}
\end{lemma}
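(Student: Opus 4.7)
The plan is to build a Doob martingale from the truncated potential and apply Azuma's inequality (Lemma~\ref{lem:azuma}). For fixed $\temp \in [0,1]$, set $\Fc_0 = \{\varnothing, \Omega\}$, $\Fc_k = \sigma(\bar F_1, \ldots, \bar F_k)$ for $1 \le k \le n$, and $M_k = \E[\tilde p_n(\temp, \bar F) \mid \Fc_k]$. Then $M_0 = \E \tilde p_n(\temp, \bar F)$ and $M_n = \tilde p_n(\temp, \bar F)$. The key step is to establish the uniform increment bound $|M_k - M_{k-1}| \le 2 b \ln n$; once this is in place, Azuma with $N = n$ and $c = 2b\ln n$ yields
\begin{equation*}
\Pp\bigl\{|M_n - M_0| > x\bigr\} \le 2 \exp\!\Bigl(-\tfrac{x^2}{2n(2b\ln n)^2}\Bigr) = 2\exp\!\Bigl(-\tfrac{x^2}{8nb^2\ln^2 n}\Bigr),
\end{equation*}
which is the claim.

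To prove the increment bound I would use a resampling argument. Let $\bar F'_k$ be an independent copy of $\bar F_k$, independent of all other coordinates. By independence of $(\bar F_j)_{j=1}^n$,
\begin{equation*}
M_{k-1} = \E\bigl[\tilde p_n(\temp, \bar F_1, \ldots, \bar F_{k-1}, \bar F'_k, \bar F_{k+1}, \ldots, \bar F_n) \bigm| \Fc_k\bigr],
\end{equation*}
and hence by conditional Jensen,
\begin{equation*}
|M_k - M_{k-1}| \le \E\bigl[\bigl|\tilde p_n(\temp, \bar F) - \tilde p_n(\temp, \bar F_1, \ldots, \bar F'_k, \ldots, \bar F_n)\bigr| \bigm| \Fc_k, \bar F'_k\bigr].
\end{equation*}
It therefore suffices to prove a pathwise stability estimate: if two potential sequences differ only in the $k$-th coordinate, and both of those coordinates are bounded in sup-norm by $b\ln n$, then the corresponding values of $\tilde p_n(\temp, \cdot)$ differ by at most $2b\ln n$.

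The stability estimate is essentially trivial given how the truncation was constructed: by definition of $\bar F_k$ we have $\sup_x |\bar F_k(x)| \le b\ln n$ unconditionally (either the threshold is not exceeded, in which case $|\bar F_k| \le \xi_k \le b\ln n$, or $\bar F_k \equiv 0$). For $\temp \in (0,1]$, replacing $\bar F_k$ by $\bar F'_k$ multiplies the integrand defining $\tilde p_n(\temp, \bar F)$ pointwise by a factor in $[e^{-2\be b\ln n}, e^{2\be b\ln n}]$, and since $\temp \cdot \be = 1$, the value of $\tilde p_n = \temp \ln(\cdot)$ changes by at most $2b\ln n$. For $\temp = 0$, every candidate path in the minimization has its total action shifted by at most $2b\ln n$, so the minimum shifts by the same amount.

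The only delicate point is that the stability constant $2b\ln n$ is required to be independent of $\temp$; but this is exactly what the $\temp$-independent truncation at level $b\ln n$ achieves, so the two cases ($\temp > 0$ and $\temp = 0$) give identical bounds. With the uniform increment estimate in hand, Azuma's inequality applies verbatim and delivers the stated concentration inequality.
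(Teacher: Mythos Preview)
Your proposal is correct and follows essentially the same argument as the paper: build the Doob martingale $M_k=\E[\tilde p_n(\temp,\bar F)\mid \Fc_k]$, bound the increments by $2b\ln n$ via a resampling/independent-copy argument using $\sup|\bar F_k|\le b\ln n$, and apply Azuma. The paper writes the resampling more explicitly through a hybrid partition function $\tilde Z^n_\temp([\bar F,\bar G]_k)$ and handles $\temp=0$ by passing to the limit, but the substance is identical.
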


\begin{proof}
    Let us introduce the following martingale $(M_k,\Fc_k)_{0\le k \le n}$:
\begin{equation*}
  M_k = \E ( \tilde{p}_n(\temp, \bar{F}) \,|\, \Fc_k ), \quad 0\le k \le n,
\end{equation*}
where 
\begin{equation*}
  \Fc_{0}= \{\emptyset,\Omega \}, \quad
  \Fc_k = \sigma \big( F_{i,\omega}(x) : 1 \le i \le k \big), \quad  k=1,\dotsc,n.
\end{equation*}
If we can show that $|M_{k} - M_{k-1}| \le 2b\ln n$, $ 1 \le k \le n$, 
then the conclusion of the lemma follows immediately from Azuma's inequality (Lemma~\ref{lem:azuma}).

For a process $\bar G$, an  independent distributional copy of $\bar F$, let us define
\begin{multline*}
  \tilde{Z}_{\temp}^n ([\bar{F},\bar{G}]_k) = \int_{|x_i|\le R_1n}
\prod_{i=1}^k g_{\temp}(x_{i}-x_{i-1}) e^{- \be \bar{F}_i(x_i)} \\ \cdot \prod_{i=k+1}^n g_{\temp}(x_i-x_{i-1})
e^{-\be \bar{G}_i(x_i)} \delta_0(d x_0) dx_1\dotsm dx_{n-1} \delta_0(d x_n).
\end{multline*}
Denoting by $P_k$ the distribution of $\bar{F}_{k}(\cdot)$, we obtain for $\temp \in (0,1]$,
\begin{equation*}
\begin{split}
&|M_{k} - M_{k-1}| 
\\=& \temp \Bigg| \int \ln \tilde{Z}^n_{\temp}([\bar{F},\bar{G}]_k) \prod_{i=k+1}^n P_i\big( d
\bar{G}_i \big) -
\int \ln \tilde{Z}^n_{\temp}([\bar{F},\bar{G}]_{k-1}) \prod_{i=k}^n P_i\big( d \bar{G}_i \big) \Bigg|
\\
\le& \temp\int \Big|  \ln \tilde{Z}^n_{\temp}([\bar{F},\bar{G}]_k) - \ln \tilde{Z}^n_{\temp}([\bar{F},\bar{G}]_{k-1}) \Big|
\prod_{i=k}^n P_i\big( d \bar{G}_i \big) \\
\le& \int \Big( \sup_{|x| \le R_1 n} |\bar{F}_k(x)| + \sup_{|x| \le R_1n} | \bar{G}_k(x)| \Big)
\prod_{i=k}^n P_i\big( d \bar{G}_i \big) \le 2b\ln n,
\\
\end{split}
\end{equation*}
since $|\bar{F}_k(x)|$ and $|\bar{G}_k(x)|$ are bounded by  $b \ln n$.
By taking $\temp \downarrow 0$ in the above inequality (or using that resampling the
potential field $\big( F_i(\cdot) \big)$ at any given $i$ will change the optimal action by at most $2b
\ln n$), we can see that $|M_k - M_{k-1}| \le 2b \ln n$ also holds when $\temp = 0$.
This completes the proof.
\end{proof}

We note that in lemma~\ref{lem:martingale-concentration-for-ZF-bar}, we estimate the probability of an event defined for a fixed $\temp$, since 
the Azuma inequality applies to a fixed martingale and cannot be immediately used for uniform
concentration of a family of martingales parametrized by $\temp$.

\begin{proof}[Proof of Lemma \ref{lem:free-energy-concentration-around-expectation}]
  Suppose $u \in \big( 3(D_1+4/\eta + 3), n \ln n \big] $.   Then 
\begin{align*}
     &\quad  \Pp \big\{ |p_n(\temp) - \E p_n(\temp)| > u \big\} \\
&  \le \Pp \bigl\{ |p_n(\temp) - \tilde{p}_n(\temp)| > 1 \bigr\} 
 +  \Pp \bigl\{ | \tilde{p}_n(\temp)-   \tilde{p}_n(\temp, \bar{F}) | > \frac{u}{3} \bigr\} \\
& +  \Pp \bigl\{ | \tilde{p}_n(\temp, \bar{F})  - \E  \tilde{p}_n(\temp, \bar{F}) |  >
  \frac{u}{3}\bigr\} 
 +  \Pp \bigl\{  |\E  \tilde{p}_n(\temp, \bar{F})  - \E \tilde{p}_n(\temp)|  > 4/\eta +1 \bigr\} \\
&  + \Pp \bigl\{ |\E \tilde{p}_n(\temp) - \E p_n(\temp)| > D_1 +1  \bigr\}.
\end{align*}
By~\eqref{eq:item:9} and~(\ref{eq:p-tilde-close-to-p-in-expectation}), the last two terms equal 0.
The first three terms can be bounded by using~(\ref{eq:p-tilde-close-to-p-in-probability}), 
\eqref{eq:item:8} and Lemma~\ref{lem:martingale-concentration-for-ZF-bar}, respectively.
Combining all these estimates together, we obtain
\begin{equation*}
    \Pp \big\{ |p_n(\temp) - \E p_n(\temp)| > u \big\} \\  
  < 3e^{-d_1R_1n} + 2e^{-\frac{\eta u}{6}} + 2e^{ -
    \frac{u^2}{72b^2n \ln^2 n}}
  \le b_1 e^{-b_2 \frac{u^2}{n\ln^2 n}},
\end{equation*}
for some constants $b_1,b_2 > 0$, where in the last inequality we use $u \le n\ln n$.
\end{proof}

We also have obtained a similar concentration inequality for $\tilde{p}_n(\temp)$ which will be used
in the next section.
\begin{lemma}
\label{lem:concentration-for-p-tilde}
Let $b_i$'s be the constants in Lemma~\ref{lem:free-energy-concentration-around-expectation}.  Then
 for all $n \ge b_0$, all~$\temp \in  [0,1]$ and all $u\in (b_3, n\ln n]$, 
  \begin{equation*}
    \Pp\Big\{ |\tilde{p}_n(\temp)- \E \tilde{p}_n(\temp)|\le  u \Big\}
    \ge 1-
    b_1 \exp \left\{ -b_2 \frac{u^2}{n\ln^2n} \right\}.
  \end{equation*}
\end{lemma}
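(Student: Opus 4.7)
The plan is to mimic the proof of Lemma~\ref{lem:free-energy-concentration-around-expectation}, but starting directly from $\tilde{p}_n(\temp)$, so that two of the five telescoped terms (the ones comparing $p_n$ with $\tilde{p}_n$) drop out entirely. Concretely, for $u > 3(4/\eta + 1)$ I would decompose
\begin{equation*}
  |\tilde{p}_n(\temp) - \E \tilde{p}_n(\temp)|
  \le |\tilde{p}_n(\temp) - \tilde{p}_n(\temp,\bar F)|
  + |\tilde{p}_n(\temp,\bar F) - \E \tilde{p}_n(\temp,\bar F)|
  + |\E \tilde{p}_n(\temp,\bar F) - \E \tilde{p}_n(\temp)|,
\end{equation*}
and estimate the probability that each of the first two summands exceeds $u/3$, while showing that the third summand is deterministically bounded by some absolute constant.

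First, the third term is controlled by \eqref{eq:item:9}, which gives $|\E \tilde{p}_n(\temp) - \E \tilde{p}_n(\temp,\bar F)| \le 4/\eta$; for $u$ larger than the threshold $b_3$ (suitably adjusted), this contribution is absorbed into the event of interest and contributes $0$ to the failure probability. Second, the potential-truncation error $|\tilde{p}_n(\temp) - \tilde{p}_n(\temp,\bar F)|$ is bounded by Lemma~\ref{lem:partition_function_of_omega_bar_and_omega}\eqref{eq:item:8}, which yields
\begin{equation*}
  \Pp\bigl\{ |\tilde{p}_n(\temp) - \tilde{p}_n(\temp,\bar F)| > u/3 \bigr\} \le 2 e^{-\eta u/6}.
\end{equation*}
Third, the main fluctuation term $|\tilde{p}_n(\temp,\bar F) - \E \tilde{p}_n(\temp,\bar F)|$ is controlled by Lemma~\ref{lem:martingale-concentration-for-ZF-bar}, giving
\begin{equation*}
  \Pp\bigl\{ |\tilde{p}_n(\temp,\bar F) - \E \tilde{p}_n(\temp,\bar F)| > u/3 \bigr\}
  \le 2 \exp\Bigl(-\frac{u^2}{72 b^2 n \ln^2 n}\Bigr).
\end{equation*}

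Summing the three contributions and using $u \le n \ln n$ to dominate the exponential term $e^{-\eta u/6}$ by the Gaussian one (at the cost of enlarging constants), I would obtain
\begin{equation*}
  \Pp\bigl\{ |\tilde{p}_n(\temp) - \E \tilde{p}_n(\temp)| > u \bigr\}
  \le b_1 \exp\Bigl(-b_2 \frac{u^2}{n \ln^2 n}\Bigr),
\end{equation*}
with the same constants $b_0, b_1, b_2, b_3$ as in Lemma~\ref{lem:free-energy-concentration-around-expectation} (possibly after trivial adjustment). There is no serious obstacle here: since both $p_n$ and $\tilde{p}_n$ share the same truncated approximant $\tilde{p}_n(\temp,\bar F)$ and the Azuma bound is applied to exactly the same martingale, this lemma is really a by-product of the intermediate estimates already established in Section~\ref{sec:simpler-concentration}. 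The only mild care required is bookkeeping of the constants to ensure the statement can be made uniform in $\temp \in [0,1]$, which is already guaranteed by the $\temp$-uniform versions of \eqref{eq:item:8}, \eqref{eq:item:9}, and Lemma~\ref{lem:martingale-concentration-for-ZF-bar}.
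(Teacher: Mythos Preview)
Your proposal is correct and follows exactly the approach the paper intends: Lemma~\ref{lem:concentration-for-p-tilde} is stated immediately after the proof of Lemma~\ref{lem:free-energy-concentration-around-expectation} with the remark that it has ``also'' been obtained, meaning it is a by-product of that proof with the two telescoping terms comparing $p_n$ and $\tilde p_n$ simply removed. Your three-term decomposition, the use of \eqref{eq:item:8}, \eqref{eq:item:9}, and Lemma~\ref{lem:martingale-concentration-for-ZF-bar}, and the final absorption of $e^{-\eta u/6}$ via $u\le n\ln n$ all match the paper's argument.
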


\subsection{Uniform continuity of the shape function in viscosity}
To go from Lemma~\ref{lem:free-energy-concentration-around-expectation} to
Theorem~\ref{thm:concentration-of-free-energy}, 
we have to estimate the difference of~$\E p_n(\temp)$ and~$\alpha_{0; \temp} n$,  and to move $\temp
\in [0,1]$ inside the events of interest.
The key point is to establish the continuity of $\alpha_{0; \temp}$ for $\temp \in [0,1]$.
\begin{lemma}
  \label{lem:continuity-of-alpha}
  
\begin{enumerate}
\item There is a constant $b_4$ such that for sufficiently large $n$, 
\begin{equation}\label{eq:error-estimate}
| \E p_n(\temp)  -  \alpha_{0; \temp} n | \le b_4 n^{1/2} \ln^2 n, \quad \temp \in [0,1].
\end{equation}
\item $\alpha_{0; \temp}$ is  continuous $\temp \in [0,1]$.
\end{enumerate}

\end{lemma}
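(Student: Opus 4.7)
The plan is to deduce (1) from two-sided quasi-additivity
\begin{equation*}
 -C\sqrt{n}\ln^2 n \;\le\; \E p_{2n}(\temp) - 2\E p_n(\temp) \;\le\; C\sqrt{n}\ln^2 n,\qquad \temp\in[0,1],
\end{equation*}
uniformly in $\temp$, and then iterate dyadically; part (2) is essentially automatic afterwards, since $\temp\mapsto\E p_n(\temp)$ is continuous on $[0,1]$ (noted just before Lemma~\ref{lem:difference-between-p-and-p-tilde}) and (1) upgrades the pointwise convergence $\E p_n(\temp)/n\to\alpha_{0;\temp}$ to uniform convergence, so $\alpha_{0;\cdot}$ is a uniform limit of continuous functions.

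The lower half is the easy direction, obtained by Jensen applied to the concatenation identity $Z^{0,2n}_\temp(0,0) = \int Z^{0,n}_\temp(0,y) Z^{n,2n}_\temp(y,0)\,dy$. Using Lemma~\ref{lem:shear-for-Z_v} to write $Z^{0,n}_\temp(0,y) = e^{-\be y^2/(2n)}\bar Z^{(1)}(y)$ with $\bar Z^{(1)}$ stationary in $y$ and $\E\temp\ln\bar Z^{(1)}(y)=\E p_n(\temp)$, and similarly for the second interval, the identity becomes $Z^{0,2n}_\temp(0,0) = Z_0\int\bar Z^{(1)}(y)\bar Z^{(2)}(y)\,\mu(dy)$ with $\mu(dy)=e^{-\be y^2/n}/Z_0\,dy$ and $Z_0 = \sqrt{\pi\temp n}$. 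Applying $\temp\ln$, using concavity of the logarithm, and taking expectations give
\begin{equation*}
\E p_{2n}(\temp)\;\ge\; 2\E p_n(\temp) + \tfrac{\temp}{2}\ln(\pi\temp n) \;\ge\; 2\E p_n(\temp) - C,
\end{equation*}
uniformly in $\temp\in[0,1]$ and $n\ge 1$, since $\temp\ln\temp\to 0$ as $\temp\downarrow 0$; at $\temp=0$ this reduces to true superadditivity, a direct consequence of the subadditivity of optimal action.

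The upper half is the main work. By the polymer-localization bound~\eqref{eq:large-deviation-for-Z-bar} of Lemma~\ref{lem:estimates-by-comparing-action}, for $R$ large enough and with overwhelming probability uniformly in~$\temp$, the polymer $\mu^{0,2n}_{0,0;\temp}$ puts mass at most $1/2$ on $\{|\gamma_n|>Rn\}$, so
\begin{equation*}
Z^{0,2n}_\temp(0,0) \;\le\; 2\!\!\int_{|y|\le Rn}\!\! Z^{0,n}_\temp(0,y) Z^{n,2n}_\temp(y,0)\,dy \;\le\; 4Rn \sup_{|y|\le Rn} Z^{0,n}_\temp(0,y) Z^{n,2n}_\temp(y,0).
\end{equation*}
To control the sup, I use that $\temp\ln Z^{0,n}_\temp(0,y)+y^2/(2n)$ is stationary in $y$ with mean $\E p_n(\temp)$ (Lemma~\ref{lem:shear-for-Z_v}), apply the $\temp$-uniform concentration of Lemma~\ref{lem:free-energy-concentration-around-expectation} at each of the $O(n)$ integer points in $[-Rn,Rn]$, take a union bound, and interpolate by a short-scale modulus-of-continuity estimate for $y\mapsto\temp\ln Z^{0,n}_\temp(0,y)$ inherited from the smoothness of the last-step Gaussian kernel and the integrability of the oscillations of $F_n$ on unit intervals provided by assumption~\ref{item:exponential-moment-for-maximum}. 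Taking the deviation $u$ of order $\sqrt{n}\ln^{3/2}n$ makes the total failure probability $o(n^{-c})$ for any fixed~$c$, and on the good event we obtain $p_{2n}(\temp)\le 2\E p_n(\temp)+2u+O(\ln n)$ (the shift $-y^2/(2n)\le 0$ can only help in an upper bound). Passing to expectation via Cauchy--Schwarz together with the uniform $L^2$-bound of Lemma~\ref{lem:second-moment-growth-of-partition-function} on the complementary event then closes the quasi-subadditivity.

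Iterating both one-sided bounds along the dyadic scale $n,2n,4n,\dots$ and summing the geometric series $\sum_{j\ge 0} C\sqrt{2^j n}\ln^2(2^j n)/(2^{j+1}n) = O(\ln^2 n/\sqrt n)$ yields (1), and (2) follows as described. The main technical obstacle is uniformity in $\temp\in[0,1]$, in particular that the modulus-of-continuity estimate and all probabilistic constants survive the limit $\temp\downarrow 0$; this is exactly what is supplied by the $\temp$-uniform concentration of Lemma~\ref{lem:free-energy-concentration-around-expectation} and the $\temp$-uniform moment bound of Lemma~\ref{lem:second-moment-growth-of-partition-function}, obviating a separate argument at $\temp=0$.
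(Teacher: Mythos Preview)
Your proof is correct and follows the same overall strategy as the paper: establish the two-sided quasi-additivity $|\E p_{2n}(\temp)-2\E p_n(\temp)|\le C\sqrt n\,\ln^2 n$ uniformly in $\temp$ (this is the paper's Lemma~\ref{lem:doubling-argument-inequality}) and then iterate dyadically (the paper packages this as Lemma~\ref{lem:doubling-argument-lemma}, a Howard--Newman style statement). Your lower bound via Jensen applied to the concatenation identity is a clean simplification of the paper's path-comparison argument and even yields an $O(1)$ error, while your upper bound matches the paper's---localize the midpoint, pass to a discrete maximum, apply the uniform concentration of Lemma~\ref{lem:free-energy-concentration-around-expectation} at $O(n)$ integer shifts with a union bound, and close on the bad event via Cauchy--Schwarz and Lemma~\ref{lem:second-moment-growth-of-partition-function}; the one point to make explicit is that your ``modulus-of-continuity'' interpolation needs a prior restriction on $|\gamma_{n\pm1}-\gamma_n|$ (the paper's set~$C$) so that the last-step kinetic comparison costs only $O(\sqrt n)$.
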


Let us derive Theorem~\ref{thm:concentration-of-free-energy} from~\ref{lem:continuity-of-alpha} and
the results from section~\ref{sec:simpler-concentration} first.\\
\begin{proof}[Proof of Theorem~\ref{thm:concentration-of-free-energy}]
  Let us define 
\begin{equation*}
q_n(\temp) = \tilde{p}_n(\temp) - \temp \ln |E^{0,n}_{\le R_1}|, \quad \temp \in [0,1],
\end{equation*}
where~$|\cdot|$ denotes the Lebesgue measure of a set.
When $\temp > 0$, we have
\begin{equation*}
q_n(\temp) = \ln \Big(  \int_{E^{0,n}_{\le R_1}} \frac{1}{|E^{0,n}_{\le R_1}|} e^{-\be
  A^{0,n}(\gamma) } \, d \gamma \Big)^{\kappa}.
\end{equation*}
Therefore, by Lyapunov's inequality, $q_n(\temp)$ is decreasing in $\temp$.
Then by Lemma~\ref{lem:concentration-for-p-tilde}, for all $n \ge b_0$, all~$\temp \in [0,1]$ and~$x \in [b_3, n \ln n]$, 
\begin{equation}
\label{eq:concentratin-for-q}
\Pp\Big\{ |q_n(\temp)- \E q_n(\temp)|\le  x \Big\}
    \ge 1-
    b_1 \exp \left\{ -b_2 \frac{x^2}{n\ln^2n} \right\}.
\end{equation}

For fixed $n$, since $q_n(\cdot)$ is a continuous decreasing function, we can find~$M$ and $0 = \temp_1 < \temp_2 < ... <
\temp_M = 1$ such that
\begin{equation*}
  M \le 2n^{-1/2}| \E q_n(1) - \E q_n(0)|,
\end{equation*}
and
\begin{equation*}
 |\E q_n(\temp_{i+1}) - \E q_n(\temp_i)| \le n^{1/2} ,\quad  1 \le i \le M-1.
\end{equation*}
To achieve this, we can choose $\temp_i$ one by one, starting with $i=1,2$.  
Define the event $\Lambda(x) = \{ |q_n(\temp_i) - \E q_n(\temp_i)| \le x,  \  1 \le i
\le M \}$.  Then by~(\ref{eq:concentratin-for-q}),
\begin{equation}\label{eq:probability-of-Lambda-u}
\Pp \big(  \Lambda(x) \big) \ge 1 - M \cdot b_1  \exp \bigg\{  - b_2 \frac{x^2}{ n \ln^2n}
\bigg\}, \quad x \in (b_3, n \ln n].
\end{equation}

For $\omega \in \Lambda(x)$ and $\temp \in [\temp_i, \temp_{i+1}]$, since $q_n(\temp)$ and
$\E q_n(\temp)$ are both monotone in~$\temp$,
\begin{align*}
  |q_n(\temp) - \E q_n(\temp)| &= |\tilde{p}_n(\temp) - \E \tilde{p}_n(\temp)|\\
  & \le  |q_n(\temp_i) - \E q_n(\temp_{i+1})|  \vee  | q_n(\temp_{i+1}) - \E q_n(\temp_i)| \\
  &\le x + |\E q_n(\temp_i) - \E q_n(\temp_{i+1})| \\
  &\le x + n^{1/2}.
\end{align*}
Combined with~(\ref{eq:probability-of-Lambda-u}), this implies that
\begin{equation}
  \label{eq:uniform-concentration-for-p-tilde}
  \Pp \Big\{  |\tilde{p}_n (\temp) - \E \tilde{p}_{n}(\temp) | \le x + n^{1/2}, \temp \in [0,1] \Big\} \ge 1 - M \cdot b_1  \exp \bigg\{  - b_2 \frac{x^2}{ n \ln^2n}
\bigg\},
\end{equation}
for all~$x \in (b_3, n \ln n]$.

By Lemma~\ref{lem:truncated-log-partition-function-expectation}
and~(\ref{eq:error-estimate}), we have
\begin{equation}
  \label{eq:expectationp-tilde-from-an}
  | \E \tilde{p}_n(\temp) - \alpha_{0;\temp} n| \le D_1 + b_3 n^{1/2}\ln^2n, \quad \temp \in [0,1].
\end{equation}
This and Lemma~\ref{lem:exponential-bound-for-E} imply
\begin{align*}
  |\E q_n(1) - \E q_n(0) |
  &\le |\E \tilde{p}_n(1) - \E \tilde{p}_n(0)| + |E^{0,n}_{\le R_1}| \\
  &\le 2(D_1 + b_3 n^{1/2} \ln^2n) + n |\alpha_{0;1} - \alpha_{0;0}| + d_3 n \\
  &\le Kn.
\end{align*}
Hence $M \le2 Kn^{1/2}$.  Using this upper bound on~$M$
and~(\ref{eq:uniform-concentration-for-p-tilde}), (\ref{eq:expectationp-tilde-from-an}), we
complete the proof.
\end{proof}

Next we turn to the proof of Lemma~\ref{lem:continuity-of-alpha}.
\begin{lemma}
  \label{lem:doubling-argument-inequality}
  There is positive constant $b_5$ such that for all $\temp \in [0,1]$ and sufficiently large $n$, 
  \begin{equation}
    \label{eq:doubling-argument-inequality}
    |  \E p_{2n}(\temp) - 2 \E p_n(\temp)| \le b_5 n^{1/2} \ln^2n.
  \end{equation}
\end{lemma}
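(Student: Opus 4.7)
The plan is to split the path at the intermediate time $n$, exploit the independence of the two halves, and use the shear stationarity from Lemma~\ref{lem:shear-for-Z_v}. Decomposing the integral in~\eqref{eq:Z} at the coordinate $x_n$,
\begin{equation*}
Z^{0,2n}_{\temp}(0,0) = \int_{\R} Z^{0,n}_{\temp}(0,y)\,Z^{n,2n}_{\temp}(y,0)\,dy,
\end{equation*}
with the two factors independent as processes in $y$. Setting $\bar Z(y) := e^{y^2/(2n\temp)}Z^{0,n}_{\temp}(0,y)$, and analogously $\bar Z'(y)$ for the second factor, Lemma~\ref{lem:shear-for-Z_v} asserts that both processes are stationary in $y$, so
\begin{equation*}
\E\bigl[\temp \ln Z^{0,n}_{\temp}(0,y)\bigr] = \E p_n(\temp) - \frac{y^2}{2n},\qquad y\in\R.
\end{equation*}

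For the \emph{lower bound}, I would restrict the integration to $|y|\le 1$ and apply Jensen's inequality to the concave function $\ln$ against the uniform probability measure on $[-1,1]$. Taking expectations and using the identity above on $|y|\le 1$ yields $\E p_{2n}(\temp)\ge 2\E p_n(\temp)+\temp\ln 2-1/(3n)$, which is much stronger than~\eqref{eq:doubling-argument-inequality} in this direction, so essentially nothing else needs to be done here.

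For the \emph{upper bound}, I rewrite
\begin{equation*}
Z^{0,2n}_{\temp}(0,0) = \int_{\R} e^{-y^2/(n\temp)}\bar Z(y)\bar Z'(y)\,dy,
\end{equation*}
choose $L:=n^{1/2}\ln n$, and split at $|y|=L$. On the central region the elementary estimate
\begin{equation*}
\int_{|y|\le L} e^{-y^2/(n\temp)}\bar Z(y)\bar Z'(y)\,dy \le \sqrt{\pi n\temp}\,\sup_{|y|\le L}\bigl[\bar Z(y)\bar Z'(y)\bigr]
\end{equation*}
reduces the problem to controlling $\sup_{|y|\le L}\temp\ln[\bar Z(y)\bar Z'(y)]$. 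By stationarity each $\temp\ln\bar Z(y)$ has the distribution of $p_n(\temp)$, so Lemma~\ref{lem:free-energy-concentration-around-expectation} supplies Gaussian-type concentration around $\E p_n(\temp)$ at scale $n^{1/2}\ln n$; union-bounding over a polynomially dense grid in $[-L,L]$ and bridging to the continuum via a modulus of continuity yields
\begin{equation*}
\sup_{|y|\le L}\temp\ln\bigl[\bar Z(y)\bar Z'(y)\bigr]\le 2\E p_n(\temp)+O(n^{1/2}\ln^{3/2}n)
\end{equation*}
on an event of probability $1-n^{-c}$. The tail $|y|>L$ is handled by Markov's inequality applied to $\int_{|y|>L} e^{-y^2/(n\temp)}\bar Z(y)\bar Z'(y)\,dy$, whose expectation equals $[\E Z^{0,n}_{\temp}(0,0)]^2\int_{|y|>L} e^{-y^2/(n\temp)}\,dy$ by the identity $\E\bar Z(y)=\E Z^{0,n}_{\temp}(0,0)$ and decays super-polynomially in $n$; on the rare bad event the contribution to $\E p_{2n}(\temp)$ is absorbed by the moment bounds of Lemma~\ref{lem:second-moment-growth-of-partition-function}. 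Adding the $(\temp/2)\ln(\pi n\temp)$ prefactor from the Gaussian integral then produces $\E p_{2n}(\temp)\le 2\E p_n(\temp)+O(n^{1/2}\ln^2 n)$.

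\emph{The main obstacle} is the uniform-in-$y$ sup bound. Lemma~\ref{lem:free-energy-concentration-around-expectation} gives concentration at a single $y$; extending it to a supremum over a continuum requires a modulus-of-continuity estimate for $y\mapsto\temp\ln Z^{0,n}_{\temp}(0,y)$ sharp enough to bridge the grid points used in the union bound. Differentiating the Feynman--Kac representation identifies the $y$-derivative as $-(y-\E_{\mu^{0,n}_{0,y;\temp}}\gamma_{n-1})-F_n'(y)$; both terms need to be controlled, the first via the straightness-type estimates behind Lemma~\ref{lem:estimates-by-comparing-action}, and the second via the exponential moments in~\ref{item:exponential-moment-for-maximum} applied to $F_n$ on unit intervals covering $[-L,L]$. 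Once the modulus is available at polynomial cost in $n$, the grid spacing can be chosen finely enough that the bridging error is $O(1)$ while the union bound retains only a $\sqrt{\ln N}\sim\sqrt{\ln n}$ factor, which is the ultimate source of the $\ln^2 n$ in~\eqref{eq:doubling-argument-inequality}. This modulus estimate is the technical heart of the argument; everything else is algebraic.
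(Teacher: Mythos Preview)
Your decomposition at time $n$ and use of shear stationarity (Lemma~\ref{lem:shear-for-Z_v}) match the paper's strategy exactly, and your lower bound via Jensen is correct and in fact a bit cleaner than the paper's (which restricts to $\gamma_n\in[0,1)$ and compares actions of a path and its modification with $\bar\gamma_n=0$).

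The upper bound has a genuine gap: the Markov argument for the tail $|y|>L$ does not work. You claim that
\[
\E\int_{|y|>L} e^{-y^2/(n\temp)}\bar Z(y)\bar Z'(y)\,dy=[\E Z^{0,n}_{\temp}(0,0)]^2\int_{|y|>L} e^{-y^2/(n\temp)}\,dy
\]
decays super-polynomially, but this is false in general: the \emph{annealed} object $\E Z^{0,n}_{\temp}(0,0)$ grows like $e^{cn}$ (compute it with Fubini and condition~\ref{item:exponential-moment-with-viscosity}), and in $1{+}1$ dimensions the annealed free energy density strictly exceeds the quenched one, so $[\E Z]^2$ dominates $e^{2\E p_n(\temp)/\temp}$ by a factor $e^{c'n}$ for some $c'>0$. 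The Gaussian tail $\int_{|y|>L}e^{-y^2/(n\temp)}\,dy\sim e^{-\ln^2 n/\temp}$ cannot beat this, and Markov yields nothing useful for comparing the tail integral to the central one.

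The paper avoids this entirely by first truncating the path space to the set $B\cap C$ (where $|\gamma_n|\le 2R_1n$ and $|\gamma_{n\pm1}-\gamma_n|\le R_1\sqrt{2n}$), which costs only $O(1)$ in expectation by Lemma~\ref{lem:truncated-log-partition-function-expectation}; there is then no tail at all. Within the box, the paper discretizes $\gamma_n$ to integers $k\in[-2R_1n,2R_1n)$, and compares the action of a path with $\gamma_n\in[k,k+1)$ to that of the modified path with $\bar\gamma_n=k$; since $|\gamma_{n\pm1}-\gamma_n|\le R_1\sqrt{2n}$ on $C$, the action difference is at most $K_1\sqrt{n}+2F^*_{n,\omega}(k)$. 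This replaces your modulus-of-continuity step with a one-line action comparison requiring only~\ref{item:exponential-moment-for-maximum} (no control of $F_n'$). The problem then reduces to bounding $\E\max_k X_k$ with $X_k=\temp(\ln Z^{0,n}_{\temp}(0,k)-\E\ln Z^{0,n}_{\temp}(0,k))$ over the $O(n)$ integers $k$, which is exactly your union-bound-plus-concentration idea applied at the grid points, and Lemma~\ref{lem:free-energy-concentration-around-expectation} delivers $O(n^{1/2}\ln^{3/2}n)$ directly. So the fix is to replace your $L=n^{1/2}\ln n$ splitting by the linear-box truncation from Lemma~\ref{lem:truncated-log-partition-function-expectation}; then the rest of your outline goes through without the continuum modulus estimate.
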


\begin{proof}
  Since $p_n(\cdot)$ is continuous, it suffices to show~(\ref{eq:doubling-argument-inequality})
  i.e., 
\begin{equation*}
    |\E \temp\ln Z_{0, 0; \temp}^{0,2n} - 2\E  \temp\ln Z_{0, 0; \temp}^{0,n} | \le b_5 n^{1/2}
    \ln^2 n, 
  \end{equation*}
   for~$\temp \in (0,1]$, and then use continuity of $\E p_n(\cdot)$.

  For $R_1$ introduced in Lemma~\ref{lem:estimates-by-comparing-action}, define
  \begin{align*}
  B &= \{ \gamma:   \max_{1 \le i \le 2n-1}|\gamma_i| \le 2R_1n \}, \\
  C &= \{ \gamma : |\gamma_n-\gamma_{n+1}| \le R_1\sqrt{2n},\
      |\gamma_n-\gamma_{n-1}| \le R_1\sqrt{2n}  \}.
\end{align*}
Since $E^{0,2n}_{ \le R_1} \subset B \cap C$,
Lemma~\ref{lem:truncated-log-partition-function-expectation} implies that 
\begin{equation}
\label{eq:truncation_does-not-change-much}
  |\E \temp \ln Z_{0, 0; \temp}^{0,2n} (B\cap C) - \E \temp \ln Z_{0,0; \temp}^{0, 2n}| \le D_1.
\end{equation}

To prove the lemma, we need to bound $\E \temp\ln Z_{0, 0; \temp}^{0,2n}(B\cap C)$ from
above and from below using $2\E \temp \ln Z^{0, n}_{0, 0; \temp}$ plus some error terms.
First, let us deal with the lower bound.
By the definition of the sets~$B$ and~$C$, we have
\begin{align*}
  Z_{0, 0; \temp}^{0,2n}(B\cap C)
\ge Z_{0, 0; \temp}^{0,2n}(B\cap C \cap \{\gamma_n \in [0,1) \}). 
\end{align*}
Let us now compare the action of every path $\gamma$ in $B\cap C \cap \{\gamma_n \in [0,1) \}$ to the action
of the modified path $\bar \gamma$ defined by $\bar{\gamma}_n = 0$ and 
$\bar{\gamma}_j = \gamma_j$ for $j \neq n$. We recall that the action of a path was defined in~(\ref{eq:def-of-action}).
Since $|\gamma_{n+1} - \gamma_{n}| \le R_1\sqrt{2n}$, $|\gamma_n-\gamma_{n-1}| \le R_1 \sqrt{2n}$, and $|\gamma_n| \le 1$,
we get
\begin{equation*}
\begin{split}
  |A^{0,2n}(\gamma) - A^{0,2n}(\bar \gamma)|
   &\le \frac{1}{2}\left| (\gamma_{n+1}-\gamma_n)^2-\gamma_{n+1}^2 +
    (\gamma_{n-1}-\gamma_n)^2 - \gamma_{n-1}^2
  \right|
  + 2F_{n,\omega}^{*}(0)
  \\ &\le 2R_1 \sqrt{2n} + 1 + 2F_{n,\omega}^{*}(0).
\end{split}
\end{equation*}
So, there is a constant $K_1>0$ such that 
\begin{equation}
\label{eq:Z-almost-a-product}
  Z_{0,0; \temp}^{0,2n}(B\cap C)\ge  Z_{0, 0; \temp}^{0,n}(D^-)Z_{0, 0; \temp}^{n,2n}(D^+) e^{-  \be \big(  K_1
    \sqrt{n} - 2F_{n,\omega}^{*}(0)  \big)},
\end{equation}
where
\begin{align*}
      D^- &= \{ \gamma: |\gamma_{n-1}| \le R_1 \sqrt{2n} + 1,\
        |\gamma_i| \le 2R_1n,\ 1 \le  i \le n-1\}, \\
  D^+ &= \{\gamma: |\gamma_{n+1}| \le R_1 \sqrt{2n} + 1,\
        |\gamma_i| \le 2R_1n,\ n+1 \le  i \le 2n-1 \}.
\end{align*}
Since~$E_{\le R_1}^{0,n} \subset D^-$ and~$E_{\le R_1}^{n,2n} \subset D^+$, Lemma~\ref{lem:truncated-log-partition-function-expectation} implies that
\begin{equation*}
  \temp  |\E \ln Z^{0, n}_{0, 0; \temp}(D^-) - \E \ln Z^{0, n}_{0, 0; \temp} |\le  D_1, \quad
  \temp |\E \ln Z_{0, 0; \temp}^{n,2n}(D^{+}) - \E \ln Z_{0, 0; \temp}^{n, 2n} | \le D_1.
\end{equation*}
Combining this with~\eqref{eq:Z-almost-a-product}, we obtain
\begin{align*}
\temp \E \ln  Z_{0, 0; \temp}^{0,2n}(B \cap C)
  &\ge \temp\Big(  \E \ln Z_{0, 0; \temp}^{0,n}(D^-) + \E \ln Z_{0, 0; \temp}^{n,2n}(D^+) \Big) -K_1 \sqrt{n} - 2\E F_{n,\omega}^{*}(0)
     \\
  &\ge  \temp\cdot 2\E \ln Z_{0, 0; \temp}^{0,n} - 2D_1-K_1 \sqrt{n} - 2\E F_{n,\omega}^{*}(0),
\end{align*}
where we used $\ln Z^{0, n}_{0,0; \temp} \disteq \ln Z_{0,0; \temp}^{n,2n}$ in the last inequality.

Next, let us turn to the upper bound.
 Similarly to~\eqref{eq:Z-almost-a-product}, we compare actions of generic paths in $B\cap C$  to the actions of the modified paths with integer
value at time $n$:
\begin{align*}
  Z_{0, 0; \temp}^{0,2n}(B\cap C)
  &= \sum_{k=-2R_1n}^{2R_1n-1}  Z_{0, 0; \temp}^{0,2n}(B\cap C\cap \{\gamma_n \in [k,k+1) \} ) \\
  &\le \sum_{k=-2R_1n}^{2R_1n-1} Z_{\temp}^{0,n}(0,k)Z_{\temp}^{n,2n}(k,0) e^{ \be \Big(  K_1 \sqrt{n} +
    2F_{n,\omega}^{*}(k)  \Big)} \\
  &\le 4R_1n \max_k [Z_{\temp}^{0,n}(0,k)Z_{\temp}^{n,2n}(k,0)]
    e^{\be \Big( K_1 \sqrt{n} + 2\max_k F_{n,\omega}^{*}(k) \Big)}, 
\end{align*}
where the maxima are taken over $-2R_1n \le k \le 2R_1n-1$.
Taking logarithm and then expectation of both sides, we obtain
\begin{align*}
  &\quad\temp \E \ln Z_{0, 0; \temp}^{0,2n}(B\cap C) \\
  &\le \temp \Big( \E \max_k \ln Z_{\temp}^{0,n}(0,k) + \E \max_k \ln Z_{\temp}^{n,2n}(k,0) \Big)
    + \temp \ln(4R_1n) + K_1 \sqrt{n} + 2 \E \max_k F_{n,\omega}^{*}(k) \\
  &\le   \max_k \E\temp \ln Z_{\temp}^{0,n}(0,k) + \E \max_k X_k
    + \max_k \E \temp\ln Z_{\temp}^{n,2n}(k,0) + \E \max_k Y_k    + K_2(\ln n + \sqrt{n} + 1) \\
  &\le 2\E \temp \ln Z_{0, 0; \temp}^{0,n} + \E \bigl[\max_k X_k+\max_k Y_k\bigr] \Big)+ K_2 (\ln n + \sqrt{n} + 1),
\end{align*}
for some constant $K_2>0$, where
\begin{equation*}
  X_k =\temp\Big( \ln Z_{\temp}^{0,n}(0,k) - \E\ln Z_{\temp}^{0,n}(0,k) \Big), \quad
  Y_k = \temp\Big( \ln Z_{\temp}^{n,2n}(k,0) - \E\ln Z_{\temp}^{n,2n}(k,0) \Big).
\end{equation*}
 In the second inequality, we used~\eqref{eq:item:4} to
 conclude 
\begin{equation*}
  \E \max_{-2R_1n\le k \le 2R_1n-1}F_{n,\omega}^{*}(k) \le b \ln (2n) + 4/\eta,
\end{equation*}
and in the third inequality, we used the fact that
\begin{equation*}
  \E \ln Z_{\temp}^{0,n}(0,k) \le \E \ln Z_{0, 0; \temp}^{0,n}, \quad
  \E \ln Z_{\temp}^{n,2n}(k,0) \le \E \ln Z_{0, 0; \temp}^{n,2n} = \E \ln Z_{0, 0; \temp}^{0,n}.
\end{equation*}
It remains to bound $\E \max_k X_k$ and $\E \max_k Y_k$.
By the shear invariance, all $X_k$ and~$Y_k$  have the same distribution, so
\begin{equation*}
  \E X_n^2 = \E Y_n^2 = \E \Big(\temp\ln Z_{\temp}^n\Big)^2 \le M(2)n^2
\end{equation*}
by Lemma~\ref{lem:second-moment-growth-of-partition-function}.
Let
\begin{equation*}
  \Lambda = \left\{ \max_k X_k \le rn^{1/2}\ln^{3/2} n, \quad \max_k Y_k \le rn^{1/2}\ln^{3/2}n \right\},
\end{equation*}
with $r$ to be determined. We have
\begin{align*}
  \E \left[\max_k X_k + \max_k Y_k\right]
  &\le \E \ONE_{\Lambda} (\max_kX_k + \max_kY_k) + \E \ONE_{\Lambda^{c}} (\max_kX_k+\max_kY_k) \\
  & \le 2rn^{1/2}\ln^{3/2}n + \sqrt{2\Pp(\Lambda^c) \E( \max_kX_k^2+\max_kY_k^2)} \\
  & \le 2rn^{1/2} \ln^{3/2}n + \sqrt{16\Pp(\Lambda^c) M(2)R_1n^3}.
\end{align*}
To bound the second term by a constant, we use Lemma~\ref{lem:free-energy-concentration-around-expectation}:
\begin{align*}
  \Pp (\Lambda^c) &\le \sum_{k=-2R_1n}^{2R_1n-1} \Biggl[\Pp \left\{ \temp|\ln Z_{\temp}^{0,n}(0,k) - \E\ln Z_{\temp}^{0,n}(0,k)| \ge rn^{1/2}\ln^{3/2}n \right\}  \\
  &\quad + \Pp \left\{ \temp|\ln Z_{\temp}^{n,2n}(k,0) - \E\ln Z_{\temp}^{n,2n}(k,0)| \ge rn^{1/2} \ln^{3/2}n \right\} \Biggr]\\
                  &\le 8R_1n \Pp \left\{\temp |\ln Z^n_{\temp} - \E\ln Z_{\temp}^{n}| \ge rn^{1/2}\ln^{3/2}n \right\} \\
  & \le 8R_1n b_1 \exp \{-b_2 r^2 \ln n \},
\end{align*}
and choose $r$ to ensure $b_2r^2>4$. This completes the proof.
\end{proof}

We can now use the following straightforward adaptation of~Lemma~4.2 of~\cite{HoNe}  from real argument functions to sequences:
\begin{lemma}
\label{lem:doubling-argument-lemma}
Suppose that number sequences $(a_n)$ and $(g_n)$ satisfy the following conditions: $a_n/n \to \nu$ as $n
\to \infty$, $|a_{2n} - 2a_n| \le g_n$ for  $n \ge n_0$ and $\lim_{n\to \infty} g_{2n}/g_n = \psi < 2$.
Then for any $c > 1/(2-\psi)$ and for~$n\ge n_1 = n_1\big( n_0,  (g_n), c \big)$,
\begin{equation*}
|a_n - \nu n | \le c g_n.
\end{equation*}
\end{lemma}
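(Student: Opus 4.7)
The plan is a classical iterated-doubling argument: use the bound $|a_{2n}-2a_n|\le g_n$ to compare $a_n/n$ with $a_{2^kn}/(2^kn)$ along dyadic doublings, then send $k\to\infty$ and exploit that $g_{2^jn}$ grows slower than $2^j$.

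First I would introduce $b_n:=a_n/n$, so that the assumption $a_n/n\to\nu$ becomes $b_n\to\nu$, and dividing the doubling inequality by $2n$ rewrites it as
\begin{equation*}
|b_{2n}-b_n|\le \frac{g_n}{2n},\qquad n\ge n_0.
\end{equation*}
Telescoping along $n, 2n, 4n,\ldots, 2^kn$ gives
\begin{equation*}
|b_{2^kn}-b_n|\le \sum_{j=0}^{k-1}|b_{2^{j+1}n}-b_{2^jn}|\le \frac{1}{2n}\sum_{j=0}^{k-1}\frac{g_{2^jn}}{2^j}.
\end{equation*}
Passing to the limit $k\to\infty$ and using $b_{2^kn}\to\nu$, I would deduce
\begin{equation*}
|a_n-\nu n|=n|b_n-\nu|\le \frac{1}{2}\sum_{j=0}^{\infty}\frac{g_{2^jn}}{2^j}.
\end{equation*}

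Given $c>1/(2-\psi)$, I would fix $\varepsilon>0$ so small that $\psi+\varepsilon<2$ and $(2-\psi-\varepsilon)^{-1}\le c$. By $g_{2n}/g_n\to\psi$, there exists $n_1\ge n_0$ such that $g_{2m}\le(\psi+\varepsilon)g_m$ for every $m\ge n_1$. A trivial induction in $j$ then yields $g_{2^jn}\le(\psi+\varepsilon)^jg_n$ for all $n\ge n_1$ and all $j\ge 0$, which plugged into the previous display gives
\begin{equation*}
|a_n-\nu n|\le \frac{g_n}{2}\sum_{j=0}^{\infty}\Bigl(\frac{\psi+\varepsilon}{2}\Bigr)^{\!j}=\frac{g_n}{2-\psi-\varepsilon}\le c\,g_n,
\end{equation*}
the desired bound.

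There is no real obstacle here; the only care needed is in the last step, where the inductive comparison $g_{2^jn}\le(\psi+\varepsilon)^jg_n$ must hold uniformly in $j$. Since doubling the argument from $m\ge n_1$ only produces larger arguments, the one-step ratio bound propagates automatically, so the whole argument goes through once $n\ge n_1$.
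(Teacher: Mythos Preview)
Your proof is correct and follows essentially the same approach as the paper: both introduce $b_n=a_n/n$, telescope $|b_{2^kn}-b_n|$ along dyadic doublings, bound $g_{2^jn}$ (equivalently the paper's $h_{2^jn}=g_{2^jn}/(2^{j+1}n)$) geometrically using the ratio hypothesis, and then let $k\to\infty$. The only cosmetic difference is that the paper packages the ratio bound as $h_{2m}/h_m\le 1-\tfrac{1}{2c}$ whereas you use $g_{2m}/g_m\le\psi+\varepsilon$ with $\varepsilon$ chosen so that $(2-\psi-\varepsilon)^{-1}\le c$; these are equivalent.
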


\begin{proof}
  Let $b_n = a_n/n$, $h_n = g_n/(2n)$.
  Then $|b_{2n} - b_n| \le h_n$ for $n>n_0$ and $\lim_{n\to \infty } h_{2n}/h_n = \psi/2$.

  Since $\psi/2 \le 1 - \frac{1}{2c} $, there is $N>n_0$ such that $h_{2m}/ h_m \le 1 -\frac{1}{2c}$ for all
  $m > N$.
  Let us now fix $n > N$.  Then for $k \ge 0$ we have $h_{2^kn} \le \bigl( 1-\frac{1}{2c} \bigr)^k h_{n}$.
  Therefore,
\begin{equation*}
  | b_n - b_{2^kn}| \le \sum_{i=0}^{k-1} |b_{2^{i+1}n} - b_{2^in} | \le \sum_{i=0}^{k-1} h_{2^in}
  \le 2c h_{n}.
\end{equation*}
We complete the proof by letting $k \to \infty$.
\end{proof}

\begin{proof}[Proof of Lemma~\ref{lem:continuity-of-alpha}]
Thanks to Lemma~\ref{lem:doubling-argument-inequality}, we can apply Lemma~\ref{lem:doubling-argument-lemma} to $a_n = \E p_n(\temp)$, $g_n = b_5 n^{1/2} \ln^2n$, $\nu = \alpha_{0; \temp}$, $\psi =\sqrt{2}$, and some fixed constant $c > 1/(2 - \psi)$ to obtain~(\ref{eq:error-estimate}).

The inequality~(\ref{eq:error-estimate}) implies that $\frac{1}{n}p_n(\temp)$ converge to
$\alpha_{0; \temp}$ uniformly for all $\temp \in [0,1]$.  Since for each $n\in \N$,
$\frac{1}{n}p_n(\cdot)$ is continuous and decreasing, the second part follows.
\end{proof}

\section{Straightness and tightness}
\label{sec:delta-straightness-and-tightness}

In this section, we modify our approach to straightness used in~\cite{2016arXiv160704864B}, obtaining
estimates that serve all $\temp\in(0,1]$ at the same time.  Also, we avoid using monotonicity, so the argument can be extended to higher dimensions.

\begin{theorem}
  \label{thm:all-space-time-point-compactness}
 There is a full measure set $\Omega'$ such that for every $\omega \in \Omega'$ the
 following holds:  if $(m,x) \in \Z \times \R$,  $v'\in \R$, and $0\le u_0 <u_1$,
  then there is a random constant
\begin{equation*}
n_0= n_0\big(\omega,m,[x], [|v'|+u_1], [(u_1-u_0)^{-1}] \big)
\end{equation*}
(where $[\cdot]$ denotes the integer part) such that 
\begin{equation}
  \label{eq:remote-control-all-space-time-all-space-time} 
    \mu_{x,\nu; \temp}^{m,N} \big\{ \gamma: |\gamma_{m+n}  - v'n| \ge u_1n \big\}
  \le  \nu \big( [ (v'-u_0)N, (v'+u_0)N]^c \big) + e^{-\be n^{1/2}}
\end{equation}
and 
\begin{multline}
  \label{eq:compactness-control-by-terminal-measure-all-space-time}
  \mu_{x,\nu; \temp}^{m,N} \big\{ \gamma: \max_{1 \le i \le n}|\gamma_{m+i} - v'i| \ge (u_1+R_1+1)n \big\} \\
  \le  \nu \big( [ (v'-u_0)N, (v'+u_0)N]^c \big) + 2e^{-\be n^{1/2}}
\end{multline}
hold true for any terminal measure $\nu$, $(N-m)/2 \ge n \ge n_0$, and all $\temp \in (0,1]$.
Here, we use~$R_1$ that has been introduced in Lemma~\ref{lem:estimates-by-comparing-action}.
\end{theorem}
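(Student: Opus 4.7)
The strategy is to follow the straightness argument of~\cite{2016arXiv160704864B}, but to replace the fixed-$\kappa$ concentration bound used there by the uniform-in-$\kappa$ concentration of Theorem~\ref{thm:concentration-of-free-energy}, so that the resulting exceptional set is common to all $\kappa\in(0,1]$. Without loss of generality $m=0$. Using the Galilean shear $L^{v'}$ together with the spatial translation $\theta^{0,-x}$, which preserve $\Pp$ and transform partition functions and actions by explicit quadratic factors (see Lemmas~\ref{lem:invariance} and~\ref{lem:shear-for-Z_v}), we may reduce to the case $x=0$, $v'=0$. Because $\mu^{0,N}_{0,\nu;\kappa}$ is the $\nu$-mixture of the point-to-point measures $\mu^{0,N}_{0,y;\kappa}$, we have
\[
\mu^{0,N}_{0,\nu;\kappa}\{|\gamma_n|\ge u_1 n\}\le \nu\bigl([-u_0 N,u_0 N]^c\bigr)+\sup_{|y|\le u_0 N}\mu^{0,N}_{0,y;\kappa}\{|\gamma_n|\ge u_1 n\},
\]
so \eqref{eq:remote-control-all-space-time-all-space-time} follows once the supremum is bounded by $e^{-\beta n^{1/2}}$ uniformly in $\kappa$.

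For fixed $y$ with $|y|\le u_0 N$, the marginal density of $\gamma_n$ under $\mu^{0,N}_{0,y;\kappa}$ at $z$ is $Z^{0,n}_{0,z;\kappa}Z^{n,N}_{z,y;\kappa}/Z^{0,N}_{0,y;\kappa}$. For the denominator we use the lower bound obtained by restricting to $z'$ in a unit neighbourhood of the ``straight-line'' value $z_0=\lfloor yn/N\rfloor$, which requires the concentration inequality only at the single endpoint pair $(0,z_0)$ and $(z_0,y)$. For the numerator, combining Theorem~\ref{thm:concentration-of-free-energy} with the distributional identity $Z^{0,n}_{\kappa}(0,vn)\stackrel{\mathrm d}{=}e^{-\beta v^2 n/2}Z^{0,n}_{\kappa}(0,0)$ from Lemma~\ref{lem:shear-for-Z_v} and a union bound over the $\Z$-lattice of endpoints yields, on an event of probability $1-o(1)$ (and summable along a geometric subsequence of $N$, so Borel--Cantelli produces a random $n_0$), the uniform-in-$\kappa$ inequalities
\begin{align*}
\kappa\ln Z^{0,n}_{0,z;\kappa}&\le \alpha_{0;\kappa} n-\tfrac{z^2}{2n}+\epsilon,\\
\kappa\ln Z^{n,N}_{z,y;\kappa}&\le \alpha_{0;\kappa}(N-n)-\tfrac{(y-z)^2}{2(N-n)}+\epsilon,\\
\kappa\ln Z^{0,N}_{0,y;\kappa}&\ge \alpha_{0;\kappa}N-\tfrac{y^2}{2N}-\epsilon,
\end{align*}
with $\epsilon$ of lower order than $n$. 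Feeding these into the ratio, the terms $\beta\alpha_{0;\kappa}N$ cancel and what remains, thanks to strict convexity of $v\mapsto v^2/2$, is bounded above by
\[
\exp\!\Big(-\beta\Big[\tfrac{z^2}{2n}+\tfrac{(y-z)^2}{2(N-n)}-\tfrac{y^2}{2N}\Big]+O(\beta\epsilon)\Big)\le \exp\!\Big(-\beta\tfrac{(z-yn/N)^2}{2n}+O(\beta\epsilon)\Big).
\]
Since $|yn/N|\le u_0 n$ and $|z|\ge u_1 n$, we have $|z-yn/N|\ge(u_1-u_0)n$, so the exponent is at most $-\beta(u_1-u_0)^2 n/4$ once $n\ge n_0$. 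Integrating over the $\Z$-lattice of $z$ in $[-CN,CN]$ leaves a bound $\le e^{-\beta n^{1/2}}$, proving~\eqref{eq:remote-control-all-space-time-all-space-time}.

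For the maximum version~\eqref{eq:compactness-control-by-terminal-measure-all-space-time}, one applies the first bound at the first exit time $i^\ast$ of $|\gamma_i|$ past $(u_1+R_1+1)n$: on the complementary event $\{|\gamma_n|\le u_1 n\}$ the path makes a detour whose $\Sigma^{0,n}$-norm exceeds $R_1\sqrt n$, and the estimate~\eqref{eq:polymer-measure-with-more-than-linear-action} applied after a DLR decomposition at $i^\ast$ supplies the extra $e^{-\beta n^{1/2}}$. The main technical obstacle is the last step of the previous paragraph: controlling the partition-function exponent uniformly over endpoints $y$ on the $O(N)$-scale, for arbitrary $N\ge 2n$, with a single random $n_0$ independent of $N$. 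This is handled by exploiting the asymmetry between the roles of numerator and denominator, namely that the denominator only needs a lower bound at the single near-straight-line point $z_0=\lfloor yn/N\rfloor$, while for the numerator crude upper bounds suffice provided the resulting exponential decay $e^{-\beta(u_1-u_0)^2 n/2}$ beats the polynomial cost of the $\Z$-lattice union bound once $n\ge n_0$, with $n_0$ depending on $u_1-u_0$ in the stated way.
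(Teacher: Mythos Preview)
Your approach has a real gap at the heart of the first estimate. After decomposing at the single intermediate time $n$ and bounding the denominator from below by the contribution near $z_0=\lfloor yn/N\rfloor$, the ratio you must control is
\[
\frac{Z^{0,n}_{0,z;\kappa}\,Z^{n,N}_{z,y;\kappa}}{Z^{0,n}_{0,z_0;\kappa}\,Z^{n,N}_{z_0,y;\kappa}}.
\]
Applying Theorem~\ref{thm:concentration-of-free-energy} to each factor produces an additive error of order $n^{3/4}$ for the time-$n$ pieces \emph{and} of order $(N-n)^{3/4}$ for the time-$(N-n)$ pieces. The latter do not cancel: the two $Z^{n,N}$ factors have different spatial arguments ($z$ versus $z_0$), so each needs its own concentration bound, and after subtracting the shape function you are left with an error $\sim (N-n)^{3/4}$ in the exponent. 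Since the theorem must hold for \emph{all} $N\ge 2n$, this can be arbitrarily large compared to the gain $(u_1-u_0)^2 n/2$ from convexity; your claim that ``$\epsilon$ is of lower order than $n$'' is therefore unjustified. The ``asymmetry'' of your last paragraph does not repair this: the denominator lower bound still passes through $Z^{n,N}_{z_0,y;\kappa}$, which is a scale-$(N-n)$ object, and the numerator upper bound needs the companion $Z^{n,N}_{z,y;\kappa}$.

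The paper avoids this by a multiscale argument rather than a single comparison at time $n$. It introduces events $A^{0,j}_{0,q}$ (see~\eqref{eq:def-of-event-A}) stating that, under $\mu^{0,j}_{x,y;\kappa}$ with endpoints near $0$ and $q$, the position at any time in $[j/4,3j/4]$ lies within $j^{8/9}$ of the straight line; Lemma~\ref{lem:concentration-of-polymer-measure-in-delta-rectangle} bounds $\Pp\bigl((A^{0,j}_{0,q})^c\bigr)$ via the substitution Lemma~\ref{lem:substution-lemma}. The point is that here the concentration error $j^{3/4}$ is compared to the allowed deviation $j^{8/9}$ \emph{at the same scale} $j$, so it is harmless. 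In Lemma~\ref{lem:high-probability-straightness} one then telescopes down the dyadic times $i_l=2^l n$: at each step the slope of $\gamma$ changes by at most $(i_l)^{8/9}/i_{l-1}\lesssim (2^l n)^{-1/9}$, and the total slope drift from time $N$ back to time $n$ is $O(n^{-1/9})$. Requiring $K_1 n^{-1/9}<u_1-u_0$ is precisely what produces the stated dependence of $n_0$ on $[(u_1-u_0)^{-1}]$. Borel--Cantelli is then applied to the summable tail of $\Pp\bigl((\Omega^{(1)}_{c,n})^c\bigr)$ in $n$, not along a subsequence of $N$.

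For~\eqref{eq:compactness-control-by-terminal-measure-all-space-time} your idea is in the right direction, but the paper's route is cleaner and does not need a stopping-time DLR decomposition: once $|\gamma_{m+n}|\le u_1 n$ is secured by the first estimate, one conditions on the endpoint and invokes the events $B^{0,n}_{0,q}$ of~\eqref{eq:def-of-event-B}, whose probability is controlled directly by~\eqref{eq:large-deviation-for-Z-bar}.
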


Let us begin with a corollary of Theorem~\ref{thm:concentration-of-free-energy}. 
\begin{lemma}
\label{lem:substution-lemma}
Let $m, p, q \in \Z$ and~$n \in \N$.
If~$n$ is sufficiently large,  then on an event with probability at least $1 - e^{-n^{1/3}}$,
it holds that for all $x \in [p,p+1]$, $y \in [q,q+1]$, and~$\temp \in (0,1]$, 
\begin{equation*}
  \big|  \temp \ln Z^{m,m+n}_{x,y; \temp} - \alpha_{\temp}(n, x-y)  \big) 
  \big| \le n^{3/4}, 
\end{equation*}
where $\alpha_{\temp}(k,z) = \alpha_{\temp }(z/k)  \cdot k= \alpha_{0; \temp} k - \frac{z^2}{2k}$.
\end{lemma}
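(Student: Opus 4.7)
My plan is to combine the uniform-in-$\temp$ concentration of Theorem~\ref{thm:concentration-of-free-energy} with the translation and shear invariance of the distribution of the partition function, and then to extend from fixed endpoints to the rectangle $[p, p+1] \times [q, q+1]$ via a covering argument. First, by the distributional invariance under time shifts (Lemma~\ref{lem:invariance}) I may assume $m = 0$. For each $(x, y)$, the pathwise change of variables $x_k = \tilde{x}_k + x + \frac{y - x}{n} k$ inside the defining integral of $Z^{0, n}_{\temp, \omega}(x, y)$, using the identity $(\tilde{x}_k - \tilde{x}_{k-1} + v)^2 = (\tilde{x}_k - \tilde{x}_{k-1})^2 + 2 v (\tilde{x}_k - \tilde{x}_{k-1}) + v^2$ with $v = (y - x)/n$ and telescoping of the cross-term, yields
\[
\temp \ln Z^{0, n}_{\temp, \omega}(x, y) = -\frac{(y - x)^2}{2 n} + p_n\bigl(\temp, \theta^{0, x} L^{(y - x)/n} \omega\bigr).
\]
Since $\alpha_\temp(n, x - y) = \alpha_{0; \temp} n - (y - x)^2/(2 n)$, the target inequality becomes
\[
|p_n(\temp, \omega_{x, y}) - \alpha_{0; \temp} n| \le n^{3/4}, \qquad \omega_{x, y} := \theta^{0, x} L^{(y - x)/n} \omega \disteq \omega.
\]

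Next, fix a grid $G_n \subset [p, p+1] \times [q, q+1]$ of mesh $\delta_n = e^{-n^{1/3}}$, so that $|G_n| = O(e^{2 n^{1/3}})$. For each $(x_i, y_j) \in G_n$, distributional equivalence and Theorem~\ref{thm:concentration-of-free-energy} applied with $u = n^{3/4}/4$ give
\[
\Pp\bigl\{|p_n(\temp, \omega_{x_i, y_j}) - \alpha_{0; \temp} n| \le n^{3/4}/4 \text{ for all } \temp \in [0, 1]\bigr\} \ge 1 - c_1 \exp\bigl(-c_2 n^{1/2}/(16 \ln^2 n)\bigr),
\]
and a union bound over $G_n$ preserves this with probability $\ge 1 - \tfrac{1}{2} e^{-n^{1/3}}$ for $n$ large, since $n^{1/2}/\ln^2 n$ dwarfs $n^{1/3}$.

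The remaining step, and the main obstacle, is the interpolation from $G_n$ to all $(x, y) \in [p, p+1] \times [q, q+1]$: I need, on a further event of probability $\ge 1 - \tfrac{1}{2} e^{-n^{1/3}}$,
\[
\sup_{|x - x'|, |y - y'| \le \delta_n,\ \temp \in (0, 1]} |\temp \ln Z^{0, n}_\temp(x, y) - \temp \ln Z^{0, n}_\temp(x', y')| \le n^{3/4}/4.
\]
Differentiating under the integral gives $\partial_x\, \temp \ln Z^{0, n}_\temp(x, y) = \E_{\mu^{0, n}_{x, y; \temp}}[x_1 - x]$; for the $y$-direction I would factor out the boundary potential by writing $\temp \ln Z^{0, n}_\temp(x, y) = -F_n(y) + \temp \ln \widetilde Z(x, y)$, with $\widetilde Z(x, y)$ the partition function with the $e^{-F_n(y)/\temp}$ factor removed, so that $\partial_y\, \temp \ln \widetilde Z = \E_\mu[x_{n-1} - y]$ carries no $F_n'$ term. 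The residual difference $|F_n(y) - F_n(y')| \le 2 F_n^*(q) = O(\ln n)$ with high probability by~\ref{item:exponential-moment-for-maximum}. Crude Gaussian tail estimates on the polymer first and last steps, using~\eqref{eq:partition-function-not-too-small} as a uniform lower bound on the denominator $Z^{0, n}_\temp$, bound the polymer moments $|\E_\mu[x_1 - x]|$ and $|\E_\mu[x_{n-1} - y]|$ by at most a polynomial in $n$ on a high-probability event (the factors of $1/\temp$ arising in the derivatives of the Gaussians are absorbed by the $\temp$ outside the logarithm). With $\delta_n = e^{-n^{1/3}}$ the interpolation error is then $o(n^{3/4})$, completing the proof.
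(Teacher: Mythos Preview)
Your approach is correct and complete in outline, but it takes a genuinely different route from the paper's proof.

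The paper applies the concentration inequality of Theorem~\ref{thm:concentration-of-free-energy} \emph{only once}, at the single point $(0,0)$ (after reducing $m=0$, $p=q=0$), obtaining $|\temp\ln Z^{0,n}_{0,0;\temp}-\alpha_{0;\temp}n|\le n^{3/4}/2$ for all $\temp$. The passage from $(0,0)$ to arbitrary $(x,y)\in[0,1]^2$ is handled not by interpolation through a fine grid but by a single continuity estimate of order $\sqrt{n}$: one introduces the truncated partition function $\bar Z^{0,n}_{x,y;\temp}$ in which the integration over $x_1$ and $x_{n-1}$ is restricted to $[-R_1\sqrt{n}-1,R_1\sqrt{n}+1]$. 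With this restriction the dependence on $x$ and $y$ enters only through the two boundary Gaussians and the term $F_n(y)$, so a direct computation gives $|\temp\ln\bar Z_{x,y}-\temp\ln\bar Z_{0,0}|\le 2R_1\sqrt{n}+O(1)+2F_n^*(0)$; the truncation itself costs $O(1)$ with high probability by~\eqref{eq:polymer-measure-with-more-than-linear-action}. All terms fit under $n^{3/4}$.

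Your argument instead spreads the concentration over an exponentially fine grid and then interpolates via the polymer-moment identity $\partial_x\,\temp\ln Z=\E_\mu[x_1-x]$. This works, but it is heavier: the fine grid is only needed because you claim merely a polynomial bound on the polymer moment. In effect the paper's truncation is a sharper version of your moment bound (it shows $|\E_\mu[x_1-x]|=O(\sqrt{n})$ with high probability, not just $\mathrm{poly}(n)$), which is why a single point suffices there. Your route has the virtue of being more mechanical and would generalize more readily if the $O(\sqrt{n})$ continuity were unavailable; the paper's route is shorter and avoids the union bound over $e^{2n^{1/3}}$ points.
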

\begin{proof}
  Without loss of generality we can assume $m=0$ and $p = q
= 0$.  Taking $u = n^{3/4}/2$, by Theorem~\ref{thm:concentration-of-free-energy} we have that on an event $\Lambda_1$ with probability at
least~$1  - c_1e^{-c_2 \frac{n^{1/2}}{4\ln^2 n}}$, 
\begin{equation}\label{eq:at-0-0}
  \big|  \temp \ln Z^{0,n}_{0, 0; \temp} -  \alpha_{0; \temp}n
  \big| \le n^{3/4}/2,\quad \temp \in (0,1].
\end{equation}

We recall the constant~$R_1$ in Lemma~\ref{lem:estimates-by-comparing-action} and define the
following modification of~$Z_{x,y; \temp}^{0,n}$: 
\begin{align*}
  \bar{Z}_{x,y; \temp}^{0, n}
&  = \int_{|x_1|, |x_{n-1}| \le R_1 \sqrt{n} + 1 }  Z^{1,n-1}_{x_1,
  x_{n-1}; \temp} \, dx_1 dx_{n-1} \\
&  \qquad \cdot  \frac{1}{2\pi \cdot \temp} \exp \Big(  - \be \cdot \big[  \frac{(x_1 -
      x)^2}{2 } + \frac{(x_{n-1} - y)^2}{2} +  F_1(x_1) + F_n(y)\big]  \Big).
\end{align*}
For all $x,y \in [0,1]$, we have
\begin{align}
  \label{eq:cts-for-Z-bar}
&\quad  \temp | \ln \bar{Z}_{x,y; \temp}^{0,n} - \ln \bar{Z}_{0,0; \temp}^{0,n}| \\  \nonumber
  &\le \max_{y \in [0,1]} \big(  |F_n (0)| + |F_n(y)| \big)  + \max_{\substack{ x,y\in [0,1]  \\ |z|, |w|
  \le R_1 \sqrt{n} + 1 }} 
  \frac{1}{2} \big|  (z-x)^2 + (w - y)^2 - z^2 - w^2 \big| \\ \nonumber
& \le \max_{y \in [0, 1]}  \big(  |F_n (0)| + |F_n(y)| \big)+ 2R_1 \sqrt{n} + 3.
\end{align}
Using~(\ref{eq:polymer-measure-with-more-than-linear-action}) in
Lemma~\ref{lem:estimates-by-comparing-action} and the fact that 
\begin{equation*}
  \mu_{x,y; \temp}^{0,n} \big(  \big\{  \gamma: |\gamma_1| \vee |\gamma_{n-1}| > R_1 \sqrt{n} +1  \big\} \big)
  \le \mu^{0,n}_{x,y; \temp} ( \cup_{s \ge R_1}E_s^{0,n}),  \quad x, y \in [0,1], 
\end{equation*}
we obtain that on an event~$\Lambda_2$ with probability
at least~$1- 3e^{-d_1R_1 n}$, 
 \begin{equation}\label{eq:diff-between-Z-and-Z-bar}
\temp|\ln  \bar{Z}_{x,y; \temp}^{0,n} - \ln Z_{x,y; \temp}^{0,n}| \le \temp |\ln (1-2^{-\be \cdot R_1
  n})| \le  |\ln (1 - 2^{-R_1})|, \quad x,y \in [0,1],\ \temp \in (0,1].
\end{equation}
Due to assumption~\ref{item:exponential-moment-for-maximum} and Markov inequality, there is an
event~$\Lambda_3$ with probability at least~$1- e^{\varphi-\eta n^{3/4}/8}$ such that 
\begin{equation}\label{eq:F-is-small}
 \max_{x \in [0, 1]} |F_0(x)| \le n^{3/4}/8.
\end{equation}
Also, for all $x,y  \in [0, 1]$,  we have
\begin{equation}\label{eq:continuity-of-alpha}
  |\alpha_{0; \temp} n - \alpha_{\temp}(n, x-y)|
  = \frac{1}{2n} (x-y)^2 \le 1.
\end{equation}

Now consider the event $\Lambda = \Lambda_1 \cap \Lambda_2 \cap \Lambda_3$ and
combine~(\ref{eq:at-0-0}), (\ref{eq:cts-for-Z-bar}), (\ref{eq:diff-between-Z-and-Z-bar}),
(\ref{eq:F-is-small}), and~(\ref{eq:continuity-of-alpha}) together.  Then~$\Pp (\Lambda) \ge
1 - e^{-n^{1/3}}$ and if $\omega \in \Lambda$, then
\begin{equation*}
  | \temp \ln Z^{0,n}_{x,y; \temp} - \alpha_{\temp}(n,x-y) |
  \le \frac{n^{3/4}}{2} + 2 \cdot \frac{n^{3/4}}{8} + 2R_1\sqrt{n} + 4  + |\ln (1 - 2^{-R_1})|  \le n^{3/4}.
\end{equation*}
This concludes the proof.
\end{proof}

For $(m,x), (n,y) \in \Z \times \R$ with $m<n$, we define $[(m,x),(n,y)]$ to be the constant velocity path 
connecting $(m,x)$ and $(n,y)$, i.e.,  $[(m,x),(n,y)]_k=x+\frac{k-m}{n-m}(y-x)$ for~$k\in
[m,n]_{\Z} $.
For $(m,p), (n,q) \in \Z \times \Z$, we define the events
\begin{multline}
  \label{eq:def-of-event-A}
  A_{p,q}^{m,n} = \Big\{   \mu_{x,y; \temp}^{m,n} \big\{  \max_{k \in I(m,n)} | \gamma_k - [(m,p),
  (n,q)]_k \ge (n-m)^{8/9}
  \big\} \le e^{- \temp^{-1}(n-m)^{1/2}}, \\
  \ x \in [p,p+1], y \in [q,q+1],\ \temp\in (0,1]
  \Big\},
\end{multline}
where $I(m,n) = [\frac{3m+n}{4}, \frac{m+3n}{4}]_{\Z}$, and the events
\begin{multline}
  \label{eq:def-of-event-B}
  B_{p,q}^{m,n} = \Big\{  \mu_{x,y; \temp}^{m,n}  \big\{  \max_{k \in [m,n]_{\Z}} |\gamma_k - [(m,p),
  (n,q)]_k| \ge R_1n \big\} \le 2^{- \be R_1n},\\
  \ x \in [p,p+1], y \in [q,q+1],\ \temp \in [0,1]  \Big\},
\end{multline}
where $R_1$ is introduced in Lemma~\ref{lem:estimates-by-comparing-action}.
Such events $A_{p,q}^{m,n}$ and~$B_{p,q}^{m,n}$ are measurable since for a fixed Borel set $D \in
S^{-\infty,+\infty}_{*,*}$, $\mu_{x,y; \temp}^{m,n}(D)$ is continuous in~$x$, $y$ and~$\temp$.
Moreover, by translation and
shear invariance, the probability of  $A_{p,q}^{m,n}$ and $B_{p,q}^{m,n}$  depends only on $n-m$.

The probability of~$B_{p,q}^{m,n}$ can be estimated using~(\ref{eq:large-deviation-for-Z-bar}) in
Lemma~\ref{lem:estimates-by-comparing-action}.
The following lemma gives estimation on the probability of $A_{p,q}^{m,n}$.
\begin{lemma}\label{lem:concentration-of-polymer-measure-in-delta-rectangle}
For some constant $k_1$, if $N$ is large enough, then
  \begin{equation*}
    \Pp \big(  A_{0,0}^{0,N} \big) \ge 1- k_1 N^2e^{-N^{1/3}}.
  \end{equation*}
\end{lemma}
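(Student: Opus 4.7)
The plan is to use the Markov-type decomposition of the partition function through an intermediate time together with the free energy estimates of Lemma~\ref{lem:substution-lemma} and the large-deviation bound of Lemma~\ref{lem:estimates-by-comparing-action}. The starting point is the factorization
\[
 Z^{0,N}_{x,y;\temp} = \int_{\R} Z^{0,k}_{x,z;\temp}\, Z^{k,N}_{z,y;\temp} \, dz,\qquad 0 < k < N,
\]
which identifies $Z^{0,k}_{x,z;\temp} Z^{k,N}_{z,y;\temp}/Z^{0,N}_{x,y;\temp}$ with the marginal density of $\gamma_k$ under $\mu^{0,N}_{x,y;\temp}$. This reduces matters to a union bound over intermediate integer points $(k,z')$ with $k \in I(0,N) \cap \Z$ and $|z'| \ge N^{8/9}$.

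I would introduce a good event $\Omega^{*}_N$ on which: (a) the conclusion of Lemma~\ref{lem:substution-lemma} applies simultaneously to $Z^{0,k}_{x,z;\temp}$ and $Z^{k,N}_{z,y;\temp}$ for every $k \in I(0,N) \cap \Z$, $x,y \in [0,1]$, $z' \in \Z \cap [-R_1 N, R_1 N]$, $z \in [z',z'+1]$, and $\temp \in (0,1]$; (b) the analogous bound holds for $Z^{0,N}_{x,y;\temp}$ with $x,y \in [0,1]$; (c) the estimate~\eqref{eq:large-deviation-for-Z-bar} with $s = R_1$ holds on $[0,N]$. There are $O(N^2)$ relevant $(k,z')$ pairs; since the actual concentration bound behind Lemma~\ref{lem:substution-lemma} is of the form $\exp(-c n^{1/2}/\ln^2 n)$ (far stronger than $e^{-n^{1/3}}$ for $n \in [N/4, 3N/4]$), a union bound yields $\Pp(\Omega^{*}_N) \ge 1 - k_1 N^2 e^{-N^{1/3}}$ for a suitable $k_1$.

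On $\Omega^{*}_N$, for $k \in I(0,N) \cap \Z$, $|z'| \in [N^{8/9}, R_1 N]$, $z \in [z',z'+1]$, $x,y \in [0,1]$ and $\temp \in (0,1]$, substituting the free energy bounds in the marginal density formula (with the $\alpha_{0;\temp}$ terms cancelling between numerator and denominator and using $\alpha_\temp(n,w) = \alpha_{0;\temp} n - w^2/(2n)$) yields
\[
 \temp \ln \frac{Z^{0,k}_{x,z;\temp}\, Z^{k,N}_{z,y;\temp}}{Z^{0,N}_{x,y;\temp}}
 \;\le\; -\frac{(x-z)^2}{2 k} - \frac{(z-y)^2}{2(N-k)} + 3 N^{3/4} + 1.
\]
Since $|z-x|, |z-y| \ge |z'|/2$ and $k, N-k \in [N/4, 3N/4]$, the quadratic contribution is at least $(z')^2/(2N) \ge N^{7/9}/2$. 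Because $N^{7/9}/2 \gg N^{3/4}$, we obtain $\mu^{0,N}_{x,y;\temp}\{\gamma_k \in [z',z'+1]\} \le \exp(-\be N^{7/9}/4)$ uniformly in $x,y,\temp$ for large $N$. Summing over the $O(N^2)$ relevant pairs contributes at most $\tfrac{1}{2} e^{-\be N^{1/2}}$ (since $N^{7/9}/4 \gg N^{1/2}$), while event~(c) handles the tail $|z'| > R_1 N$ via $\mu^{0,N}_{x,y;\temp}\{\max_{0 \le j \le N}|\gamma_j| \ge R_1 N\} \le 2^{-\be R_1 N} \le \tfrac{1}{2} e^{-\be N^{1/2}}$. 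Adding the two pieces, on $\Omega^{*}_N$ we have $\mu^{0,N}_{x,y;\temp}\{\max_{k \in I(0,N)} |\gamma_k| \ge N^{8/9}\} \le e^{-\be N^{1/2}}$ for all admissible $x,y,\temp$, so $\Omega^{*}_N \subset A^{0,N}_{0,0}$ and the bound follows.

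The main obstacle is maintaining uniform control over both the continuous parameters $(x,y,\temp)$ and the discrete sweep over $(k,z')$ while ensuring the curvature gain from the quadratic shape function (of order $N^{7/9}$) strictly beats both the fluctuation scale $N^{3/4}$ inherited from Lemma~\ref{lem:substution-lemma} and the target threshold $N^{1/2}$; the exponent $8/9$ in the definition of $A^{m,n}_{p,q}$ is precisely calibrated so that $8/9 \cdot 2 - 1 = 7/9$ exceeds both $3/4$ and $1/2$, and it is the uniform-in-$\temp$ formulation of Lemma~\ref{lem:substution-lemma} that allows the single concentration inequality to serve all polymer measures at once.
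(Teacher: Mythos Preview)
Your proposal is correct and follows essentially the same approach as the paper: define a good event via Lemma~\ref{lem:substution-lemma} applied at all relevant intermediate space-time points together with the large-deviation bound~\eqref{eq:large-deviation-for-Z-bar}, then on that event use the factorization of $Z^{0,N}_{x,y;\temp}$ through time~$k$ and the quadratic curvature of the shape function to show the marginal of $\gamma_k$ is negligible beyond $N^{8/9}$. The only cosmetic difference is that you discretize the $z$-variable into unit intervals and sum, whereas the paper integrates the Gaussian tail directly; both give the required $e^{-\be N^{1/2}}$ bound once one notes $N^{7/9}$ dominates both $N^{3/4}$ and $N^{1/2}$. Your remark that the true probability estimate underlying Lemma~\ref{lem:substution-lemma} is of order $\exp(-c\,n^{1/2}/\ln^2 n)$ rather than $e^{-n^{1/3}}$ is in fact needed here, since applying the stated bound $e^{-n^{1/3}}$ with $n\ge N/4$ would only yield $e^{-(N/4)^{1/3}}$ after the union bound; the paper glosses over this, so your account is slightly more careful on that point.
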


\begin{proof}
By~(\ref{eq:large-deviation-for-Z-bar}) in Lemma~\ref{lem:estimates-by-comparing-action}, there is an event $\Lambda_1$ with $\Pp(\Lambda_1)\ge{1-3e^{-d_1 R_1 N}}$ on which the following holds: 
\begin{equation}\label{eq:not-likely-outside-linear-box}
  \mu_{x,y; \temp}^{0,N} \big(  \{ \gamma: \max_{1 \le k \le N-1} |\gamma_k| \le R_1 N \} \big) \le
  2^{-\be R_1 n}, \quad
  x,y\in [0,1],\ \temp \in (0,1].
\end{equation}
Applying Lemma~\ref{lem:substution-lemma} with~$(m,n,p,q)$ running over the set
\begin{equation*}
  \{ (0,k,0,l): k \in [  \textstyle{\frac{N}{4},\frac{3N}{4}}], |l| \le R_1 N \} \cup
  \{ (k,N-k, l, 0): k \in [ \textstyle{\frac{N}{4}, \frac{3N}{4} }], |l| \le R_1N\},
\end{equation*}
we can obtain an event~$\Lambda_2$ with probability at least $1 - C_1 N^2 e^{- N^{1/3}}$
on which the following holds for all $x, y \in  [0,1]$: 
\begin{align}\label{eq:substitution}
  | \temp \ln Z^{0,k}_{x,z; \temp} - \alpha_{\temp}(k, z-x) | \le k^{3/4} \le  N^{3/4} ,
  & \quad k \in [ \textstyle{\frac{N}{4},\frac{3N}{4}}],\
    |z| \le R_1N,   \\ \nonumber
  | \temp \ln Z^{k,N}_{z,y; \temp} - \alpha_{\temp}(N-k,y-z) | \le (N-k)^{3/4}\le N^{3/4},
  &\quad k \in [\textstyle{\frac{N}{4},\frac{3N}{4}}],\ 
    |z| \le R_1N ,\\ \nonumber
  |\temp \ln Z^{0,N}_{x,y; \temp} - \alpha_{\temp}(N, x-y) | \le  N^{3/4}.
\end{align}
Using~(\ref{eq:substitution}), for~$\omega\in \Lambda_2$, all $k \in \big[ \frac{N}{4}, \frac{3N}{4} \big]$ and all $x,y \in
[0,1]$, we have
\begin{align*}
& \qquad  \mu_{x, y; \temp}^{0,N} \big(  \{ \gamma: |\gamma_k| \in [N^{8/9}, R_1N] \} \big) \\
  &= \big(  Z_{x,y; \temp}^{0,N} \big)^{-1} \int_{|z| \in [N^{8/9}, R_1N]} Z^{0, k}_{x,z; \temp}
    Z^{k,N}_{z,y; \temp} \, dz \\
  &\le \exp \big({ \textstyle \be \big[ 3N^{3/4} +\frac{(x-y)^2}{2N} } \big] \big) \int_{|z| \in [N^{8/9} , R_1 N]}
    \exp \big(- { \textstyle  \be \big[  \frac{(x-z)^2}{2k} + \frac{(y-z)^2}{ 2(N-k)} \big]}   \big) \, dz\\
  &\le \exp \big( {\textstyle \be  \big[ 3N^{3/4} + 1 \big]} \big)  \int_{|z| \ge N^{8/9}/2}
    \exp\big(  - { \textstyle \be \frac{2z^2}{N}} \big)\, dz\\
  &\le N^{1/9} \exp \big( - {\textstyle \be} \big[ N^{7/9}/2 - 1 - 3N^{3/4} ]\big),
\end{align*}
where in the last inequality we use the following bound on the tail of Gaussian integral: for $a, b
> 0$,
\begin{equation*}
  \int_{|x| \ge b } e^{-\frac{x^2}{a}} dx
  \le \frac{a}{b} e^{-\frac{b^2}{a}}.
\end{equation*}
Combining this with~(\ref{eq:not-likely-outside-linear-box}), we can conclude that
$A^{0,n}_{0,0} $ is included in $\Lambda_1 \cup \Lambda_2$, which has probability at least
$1-C_2N^2e^{-N^{1/3}}$.  Here, the constants $C_1$ and~$C_2$ are independent of $N$.
This completes the proof.
\end{proof}

\begin{lemma}
\label{lem:high-probability-straightness}
Let $c > 0$, $0 < v_0 < v_1$, $v' \in \R $ and~$m,p \in \Z$.  Suppose $|v'| + v_1 < c$.
There are constants $n_1 = n_1 ([|v_1 - v_0|^{-1}])$ and $k_2$ such that when $n
> n_1$, there is an event~$\Omega^{(1)}_{c,n}(m,p)$
with probability at least $1 - k_2cn^3e^{-n^{1/3}}$ on which the following holds: for all $N > 2n$,
$\temp \in (0,1]$, $x \in [p,p+1]$ and for any terminal
measure $\nu$,
\begin{multline}
    \label{eq:remote-control} 
  \mu_{x,\nu; \temp}^{m,m+ N} \pi^{-1}_{m+n}   \big(  [p+  (v'-v_1)n, p+ (v'+v_1)n]^c \big)\\
  \le \nu \big(  [p+ (v'-v_0)N, p + (v'+v_0)N]^c \big) + e^{-\be n^{1/2}}, 
\end{multline}
and
\begin{multline}
  \label{eq:compactness-control-by-terminal-measure}
  \mu_{x,\nu; \temp}^{m,m+N} \big\{ \gamma: \max_{1 \le i \le n}|\gamma_{m+i} - p - v'i| \ge (v_1+R_1+1)n \big\} \\
  \le \nu \big( [ p+(v'-v_0)N, p+(v'+v_0)N]^c \big) + 2e^{-\be n^{1/2}}.
\end{multline}
\end{lemma}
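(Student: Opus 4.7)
The plan is to bundle the multiscale straightness events of Lemma~\ref{lem:concentration-of-polymer-measure-in-delta-rectangle} together with the short-range compactness events of Lemma~\ref{lem:estimates-by-comparing-action}, and then propagate straightness from the terminal time $m+N$ back to time $m+n$ via a dyadic cascade. Specifically, I would set
\[
\Omega^{(1)}_{c,n}(m,p)=\Biggl(\bigcap_{j\ge 1}\bigcap_{|q-p-v'2^{j}n|\le c\cdot 2^{j}n}
A^{m,m+2^{j}n}_{p,q}\Biggr)\cap\Biggl(\bigcap_{|q-p-v'n|\le cn}B^{m,m+n}_{p,q}\Biggr).
\]
At the $j$-th scale the number of relevant endpoints $q$ is $O(c\cdot 2^{j}n)$, so Lemmas~\ref{lem:estimates-by-comparing-action} and~\ref{lem:concentration-of-polymer-measure-in-delta-rectangle} give this block a failure probability $O\bigl(c(2^{j}n)^{3}e^{-(2^{j}n)^{1/3}}\bigr)$. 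Summing geometrically in $j$ (together with the single-scale $B$-block) yields the required bound $k_2 c n^{3}e^{-n^{1/3}}$ on $\Pp\bigl(\Omega^{(1)}_{c,n}(m,p)^{c}\bigr)$, and the hypothesis $|v'|+v_1<c$ ensures all relevant endpoints fall inside the cones used in the intersection.

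To establish~\eqref{eq:remote-control}, fix $K=\lceil\log_2(N/n)\rceil$, set $t_k=m+\min(2^{k}n,N)$ for $0\le k\le K$, and interpolate slope tolerances $u_0=v_1>u_1>\cdots>u_K=v_0$ with increments $u_{k-1}-u_k$ of order $2^{-k/9}n^{-1/9}$. These form a convergent geometric series of total mass $O(n^{-1/9})$, which fits inside the budget $v_1-v_0$ once $n\ge n_1([|v_1-v_0|^{-1}])$. With the good windows $G_k=[p+(v'-u_k)(t_k-m),\,p+(v'+u_k)(t_k-m)]$, disintegration through time $t_k$ gives
\[
\mu_{x,\nu;\temp}^{m,m+N}\pi_{t_{k-1}}^{-1}(G_{k-1}^c)
=\int \mu_{x,y;\temp}^{m,t_k}\pi_{t_{k-1}}^{-1}(G_{k-1}^c)\,\mu_{x,\nu;\temp}^{m,m+N}\pi_{t_k}^{-1}(dy).
\]
On the event $A^{m,t_k}_{p,\lfloor y\rfloor}$, for $y\in G_k$ the straight-line position at time $t_{k-1}$ from $(m,p)$ to $(t_k,\lfloor y\rfloor)$ lies in $G_{k-1}$ up to an error $(t_k-m)^{8/9}+1$ (the $+1$ coming from rounding $y$ to $\lfloor y\rfloor$), which the increment $u_{k-1}-u_k$ was designed to absorb. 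Hence the inner integral contributes at most $e^{-\be(t_k-m)^{1/2}}$. Telescoping over $k=1,\dots,K$ yields
\[
\mu_{x,\nu;\temp}^{m,m+N}\pi_{m+n}^{-1}(G_0^c)\le \mu_{x,\nu;\temp}^{m,m+N}\pi_{m+N}^{-1}(G_K^c)+\sum_{k=1}^{K}e^{-\be(2^{k}n)^{1/2}},
\]
and since $\be\ge 1$ the tail sum is bounded by $e^{-\be n^{1/2}}$ for $n$ large. Noting $G_0^c=[p+(v'-v_1)n,\,p+(v'+v_1)n]^c$ and $G_K^c=[p+(v'-v_0)N,\,p+(v'+v_0)N]^c$ recovers~\eqref{eq:remote-control}.

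For~\eqref{eq:compactness-control-by-terminal-measure}, I would disintegrate through the single time $m+n$ and invoke the event $B^{m,m+n}_{p,\lfloor z\rfloor}$ with $z=\gamma_{m+n}$: when $z\in G_0$ the polymer path stays within $R_1 n$ of the segment from $(m,p)$ to $(m+n,z)$, and that segment remains within $(v_1+n^{-1})i$ of $p+v'i$ at each time $m+i$, $1\le i\le n$, so the maximum deviation is at most $(v_1+R_1+1)n$ except on an event of inner mass $\le 2^{-\be R_1 n}$. Combining this inner bound with~\eqref{eq:remote-control} applied to the marginal $\mu_{x,\nu;\temp}^{m,m+N}\pi_{m+n}^{-1}(G_0^c)$ delivers the claim. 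The main technical obstacle is calibrating the interpolation $(u_k)$: the straightening error at scale $2^k n$ grows like $(2^k n)^{8/9}$, so the per-step slope increments can decay only slightly faster than $2^{-k}$, and one must verify that their total stays strictly below the budget $v_1-v_0$ uniformly in $N$---this constraint is exactly what forces the threshold $n_1$ to depend on $[|v_1-v_0|^{-1}]$.
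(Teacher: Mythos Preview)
Your overall strategy---a dyadic cascade of $A$-events plus a single-scale $B$-event---is exactly the paper's, and your derivation of~\eqref{eq:compactness-control-by-terminal-measure} from~\eqref{eq:remote-control} is correct. However, there is a genuine gap in the construction of $\Omega^{(1)}_{c,n}(m,p)$.

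You include only the events $A^{m,m+2^{j}n}_{p,q}$ at \emph{dyadic} time scales $2^{j}n$. But in your telescoping, the top step runs from $t_{K}=m+N$ down to $t_{K-1}=m+2^{K-1}n$, and you explicitly invoke ``the event $A^{m,t_k}_{p,\lfloor y\rfloor}$'' at $k=K$. Since $N$ is an arbitrary integer larger than $2n$, the event $A^{m,m+N}_{p,\lfloor y\rfloor}$ is in general \emph{not} among the dyadic events you intersected, so your argument does not establish the claim on $\Omega^{(1)}_{c,n}(m,p)$ as you defined it. The lemma requires a single event, independent of $N$, that works simultaneously for all $N>2n$; your event does not cover the non-dyadic top scale. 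The paper fixes this by taking the intersection of $A^{m,m+j}_{p,q}$ over \emph{all} integers $j\ge 2n$ (not just powers of two), which still gives the same probability bound because $\sum_{j\ge 2n}(c+1)j\cdot k_1 j^{2}e^{-j^{1/3}}=O(cn^{3}e^{-n^{1/3}})$.

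A secondary point: your event depends on $v'$ through the centering $|q-p-v'2^{j}n|\le c\cdot 2^{j}n$, whereas the notation $\Omega^{(1)}_{c,n}(m,p)$ indicates it should not. The paper avoids this by widening the cone to $|q-p|\le (c+1)j$ and using the hypothesis $|v'|+v_1<c$ to guarantee that all relevant endpoints fall inside regardless of $v'$.
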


\begin{proof}
We will choose $\Omega^{(1)}_{c,n}(m,p) = \theta^{m,p} \Omega^{(1)}_{c,n}$ ($\theta$ is the
space-time shift), where
\begin{equation}\label{eq:definition-of-Omega-1}
  \Omega^{(1)}_{c,n}= \Bigg( \bigcap_{ \substack{j \ge 2n\\ |q| \le (c+1)j}} A_{0,q}^{0,j}   \Bigg)
  \cap \Big(  \bigcap_{|q| \le (c+1)n} B_{0,q}^{0,n} \Big).
\end{equation}
Due to~(\ref{eq:large-deviation-for-Z-bar}) in Lemma~\ref{lem:estimates-by-comparing-action}, $\Pp
(B^{0,n}_{0,q}) \ge 1 - 3e^{-d_1R_1n}$.  This and
Lemma~\ref{lem:concentration-of-polymer-measure-in-delta-rectangle} imply that~${\Pp (\Omega^{(1)}_{c,n})
  \ge 1 - k_2 cn^3 e^{-n^{1/3}}}$ for some constant $k_2$.

Without loss of generality, we will assume $(m,p) = (0,0)$.  In showing~(\ref{eq:remote-control})
and~(\ref{eq:compactness-control-by-terminal-measure}), we will also assume $v'=0$ for simplicity.
The extension to other values of $v'$ is straightforward.
Let us fix a terminal measure $\nu$ and $\temp \in (0,1]$, $x \in [0,1]$, $N \ge 2n$, and assume $\omega
\in \Omega_{c,n}^{(1)}$.

For~(\ref{eq:remote-control}), it suffices to show that if~$n$ is large, then
\begin{equation*}
\mu_{x,\nu; \temp}^{0,N} \big(  \{ \gamma: |\gamma_N | <N v_0, |\gamma_n | \ge nv_1  \} \big) <
e^{- \be n^{1/2}}. 
\end{equation*}
Let $k$ be the unique integer such that~${2^k n \le N  < 2^{k+1}n}$.  For~$l \in [0,k]_{\Z}$, define
\begin{equation*}
i_l  =
\begin{cases}
  n\cdot 2^l, & 0 \le l \le k-1,      \\
  N, & l = k.
\end{cases}
\end{equation*}
Let us consider the following inequality that appears in the definition of $A_{0,[\gamma_{i_l}]}^{0,i_l}$: 
\begin{equation}
\label{eq:1}
\bigg|   \big[ (0,0), (i_l, [\gamma_{i_l}])  \big]_{i_{l-1}} - \gamma_{i_{l-1}} \bigg|
 = \bigg|  [\gamma_{i_l}] \cdot \frac{i_{l-1}}{i_l} -  \gamma_{i_{l-1}}\bigg|
\le (i_l)^{8/9}.
\end{equation}
If a path $\gamma$ satisfies~(\ref{eq:1}) for all~$l \in [l'+1, k]_{\Z}$, then
\begin{equation}
  \label{eq:slope-difference-estimate}
  \begin{split}
&\quad  \bigg|  \frac{\gamma_{i_l'}}{i_l'} - \frac{\gamma_N}{N}
  \bigg|   \le \sum_{l = l'+1}^{k} \frac{ (i_l)^{8/9} + 1}{i_{l-1}} \\
  &\le n^{-1/9} \Big[  \sum_{l=l'+1}^{k-1} \big(  2^{8/9} \cdot 2^{-\frac{1}{9}(l-1)} + 2^{-(l-1)} \big)
    +  \big(2^{16/9}\cdot  2^{-\frac{1}{9}(k-1)} + 2^{-(k-1)}  \big)
    \Big]\\
  &\le K_1 n^{-1/9}
\end{split}
\end{equation}
for some absolute constant $K_1$.

For $l' \in [0, k-1]_{\Z}$, let us define the set of paths
\begin{equation*}
\Lambda_{l'} = \{ \gamma: \text{ (\ref{eq:1}) holds for all $l \in[l'+1, k]_{\Z}$ and  $|\gamma_N| <
  Nv_0$. } \}.
\end{equation*}
We also define $\Lambda_k = \{ \gamma: |\gamma_N| < Nv_0 \}$.
Suppose $n \ge \Big(  \frac{K_1}{|v_1-v_0|  \wedge (1/2)} \Big)^9$.
If a path~${\gamma \in \Lambda_{l'} \setminus \Lambda_{l'-1}}$ ($l \in [1,k]_{\Z}$),
then~(\ref{eq:slope-difference-estimate}) implies~${|\gamma_{i_{l'}}| < (c+1/2)i_{l'}}$.
Therefore,
\begin{align*}
  \mu_{x,\nu; \temp}^{0,N} \Big( \Lambda_{l'} \setminus \Lambda_{l'-1} \Big)
  &= \int \nu(dz)  \big(  Z^{0,N}_{x,z; \temp} \big)^{-1} \int_{-(c+1/2)i_{l'}}^{(c+1/2)i_{l'}} dw \, 
    Z^{0,i_{l'}}_{x,w; \temp}(\Lambda_{l'} \setminus \Lambda_{l'-1})  Z^{i_{l'}, N}_{x,w; \temp}(\Lambda_{l'} \setminus \Lambda_{l'-1}) \\
  &\le \int \nu(dz)  \big(  Z^{0,N}_{x,z; \temp} \big)^{-1}  \int_{-
    (c+1/2)i_{l'}}^{(c+1/2)i_{l'}} dw \
   e^{- \be (i_{l'})^{1/2}} Z^{0,i_{l'}}_{x,w; \temp} Z^{i_{l'}, N}_{x,w; \temp} (\Lambda_{l'} \setminus
    \Lambda_{l'-1})\\
  &\le e^{-\be (i_{l'})^{1/2}}.
\end{align*}
Here, in the second inequality we used that $\omega \in \Omega^{(1)}_{c,n} \subset
  A_{0,[w]}^{0,i_{l'}}$ for $|w| \le (c+1/2)i_{l'}$, and hence 
\begin{equation*}
  \mu_{x,w; \temp}^{0,i_{l'}} (\Lambda_{l'} \setminus \Lambda_{l'-1})
    \le e^{-\be (i_{l'})^{1/2}}.
\end{equation*}
Also, $|v_0-v_1| > K_1n^{-1/9}$ (which holds for large $n$) and~(\ref{eq:slope-difference-estimate}) imply that 
\begin{equation*}
\Lambda_0 \cap \big\{ \gamma: |\gamma_n| > n v_1\big\} = \varnothing.
\end{equation*}
Combining all these estimates, we have 
\begin{align*}
  \mu_{x,\nu; \temp}^{0,N} \big(  \{ \gamma: |\gamma_N | <N v_0, |\gamma_n | \ge nv_1  \} \big)
 &\le  \sum_{l'=1}^k \mu_{x,\nu; \temp}^{0,N} \big(  \Lambda_{l'}\setminus \Lambda_{i'-1} \big)
  \le \sum_{l'=1}^k e^{-\be (i_{l'})^{1/2}}\\
  & \le \sum_{m=2n}^{\infty} e^{ -\be m^{1/2}} \le 
  e^{- \be n^{1/2}},
\end{align*}
which completes the proof of~(\ref{eq:remote-control}).

Now we turn to~(\ref{eq:compactness-control-by-terminal-measure}).
Let 
\begin{equation*}
D = \big\{ \gamma : \ \max_{1 \le i \le n} |\gamma_i| \ge (v_1+R_1+1)n \big\}.
\end{equation*}
If $|z| \le v_1n$,  then 
\begin{equation*}
\mu_{x,z; \temp}^{0,n} (D )
\le
\mu_{x,z; \temp}^{0,s}  \{ \gamma: \max_{1 \le i \le n} | \gamma_i - [(0,0), (n,[z])]_i | \ge R_1n\}.
\end{equation*}
For all $|z| \le v_1s $, since $\omega \in B_{0,[z]}^{0,n}$, we have  $\mu_{x,z; \temp}^{0,n} (D) \le 2^{- \be R_1n }$.
 Therefore,
\begin{equation*}
\mu_{x,\nu; \temp}^{0,N} \bigl(  D \cap \{ |\gamma_s| \le v_1s  \} \bigr) \le 2^{ - \be R_1 n }.
\end{equation*}
Then~(\ref{eq:compactness-control-by-terminal-measure}) follows from this and~(\ref{eq:remote-control}).
\end{proof}

\begin{proof}[Proof of Theorem~\ref{thm:all-space-time-point-compactness}]
The Theorem directly follows from Lemma~\ref{lem:high-probability-straightness} 
and the Borel--Cantelli Lemma.
\end{proof}

\section{Infinite volume polymer measures and their zero-temperature limit}
\label{sec:zero-temperature-limit}

In this section, we will prove our main results, Theorems~\ref{thm:invisid-limit-for-polymer-measures} and~\ref{thm:convergence-of-busemann-function}.
We will show that $\Omega'$ introduced in
Section~\ref{sec:delta-straightness-and-tightness} can be chosen as the full measure set the existence of which is claimed in Theorem~\ref{thm:invisid-limit-for-polymer-measures}, 
and that 
we can take $\hat{\Omega}_v = \Omega' \cap
\Omega_{v;0} \cap \bigcap_{\temp \in \mathcal{D}} \Omega_{v;\temp}$ in
Theorem~\ref{thm:convergence-of-busemann-function}.

Let us first prove part~\ref{item:existence-of-inifinte-polymer-measures} of
Theorem~\ref{thm:invisid-limit-for-polymer-measures}, since it only uses the
properties of~$\Omega'$ established in Section~\ref{sec:delta-straightness-and-tightness}.
We recall the following notion of tightness. For fixed~$(m,x) \in \ZR$, suppose $(\mu_k)$ is a family of 
probability measures such that for each $k$, $\mu_k$ is defined on $S^{m,N_k}_{x,*}$, for some $N_k \to
\infty$, as $k\to\infty$.  We say that $(\mu_k)$ is tight if for each $\eps > 0$, there is a compact set $K \subset
\R^n$ such that 
\begin{equation}
\label{eq:definition-of-tightness}
\mu_k \pi_{m,m+n}^{-1} (K^c) < \eps, \quad N_k > m+n.
\end{equation}

\begin{proof}[Proof of part~\ref{item:existence-of-inifinte-polymer-measures} in  Theorem~\ref{thm:invisid-limit-for-polymer-measures}]
Let $\mu_N = \mu^{m, N}_{x, Nv; \temp}$.  We will show that the family of measures $\big( \mu_N
\big)_{N > m}$ is tight and any limit point in the weak topology belongs to $\Pc^{m,
  +\infty}_x(v)$.

Given $\eps>0$, choosing $v' = v$, $u_0 = 0$, $u_1 = 1$ in
Theorem~\ref{thm:all-space-time-point-compactness}, we see that if 
\begin{equation*}
(N-m)/2 \ge n \ge n_0(\omega, m, [x] , [|v| + 1], 1) \vee \ln^2 \eps,
\end{equation*}
then, due to (\ref{eq:compactness-control-by-terminal-measure-all-space-time}), 
\begin{equation*}
  \mu_N\Bigl\{ \gamma: \max_{m \le i \le m+n} |\gamma_i -vi| \ge (R_1+2)n \Bigr\}
  \le \delta_{Nv} \big( [Nv, Nv]^c \big) + 2e^{- \be n^{1/2}}
  \le 2 \eps.
\end{equation*}
Therefore, $\big( \mu_N \big)_{N > m}$ is tight.

Suppose $\mu_N$ converge to $\mu$ weakly.  To show that $\mu \in \Pc^{m,+\infty}_x(v)$, it suffices to
show that for all $\eps> 0$, 
\begin{equation}
\label{eq:6}
\sum_{n = 1}^{\infty}
\mu \pi^{-1}_{m+n} \big(  [(m+n)(v-\eps), (m+n)(v+\eps)]^c \big) < \infty.
\end{equation}
Let us choose~$v'=u$, $u_0 = \eps/2$ and $u_1
= \eps$.  Then (\ref{eq:remote-control-all-space-time-all-space-time}) in
Theorem~\ref{thm:all-space-time-point-compactness} implies that if~$(N-m)/2 \ge n \ge n_0 (\omega, m, [x],
[|v| + \eps], [2\eps^{-1}])$, then
\begin{multline*}
  \mu_N \pi^{-1}_{m+n} \big(  [(m+n)(v-\eps), (m+n)(v+\eps)]^c \big)
  \\ \le \delta_{Nv} \big(  [N(v-\frac{\eps}{2}), N(v+\frac{\eps}{2})]^c \big) + e^{-\be n^{1/2}}
   e^{-\be n^{1/2}}.
\end{multline*}
Taking $N\to\infty$, by weak convergence we have 
\begin{equation*}
\mu \pi^{-1}_{m+n}\big( [(m+n)(v-\eps),(m+n)(v+\eps)]^c \big) \le e^{-\be n^{1/2}}.
\end{equation*}
This implies (\ref{eq:6}) and completes the proof.
\end{proof}

Now we proceed to prove the rest of Theorem~\ref{thm:invisid-limit-for-polymer-measures} and
Theorem~\ref{thm:convergence-of-busemann-function}.

Let us fix $\omega \in \Omega'$ and $(m,x) \in \ZR$.
Let $\mu_{\temp} \in \Pc^{m,+\infty}_{x; \temp}(v)$, $\temp \in (0,1]$.  We first derive some properties for such a family $\big( \mu_{\temp} \big)$.

\begin{lemma}
\label{lem:tightness-IVPM-with-different-temp}
If $n > n_0(\omega, m, [x], [|v| + 1], 2)$, then for all $\temp \in (0,1]$,
\begin{equation}
\label{eq:tightness-IVPM-at-different-temp}
\mu_{\temp} \{ \gamma: \max_{m \le i \le m+n} |\gamma_i - vi| \ge (R_1 + 2)n \}
\le 2e^{- \be n^{1/2}}.
\end{equation}
\end{lemma}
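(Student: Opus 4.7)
The strategy is to combine the DLR property of the infinite-volume polymer measure $\mu_{\temp}$ with the uniform-in-$\temp$ straightness estimate of Theorem~\ref{thm:all-space-time-point-compactness}, and then let the terminal time tend to infinity, using the SLLN property built into the definition of $\Pc^{m,+\infty}_{x;\temp}(v)$.

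First I would fix $N>m$ and invoke the DLR condition satisfied by $\mu_{\temp}\in\Pc^{m,+\infty}_{x;\temp}(v)$: the projection of $\mu_{\temp}$ onto $S^{m,N}_{x,*}$ coincides with the point-to-measure polymer $\mu^{m,N}_{x,\nu_N;\temp}$, where $\nu_N=\mu_{\temp}\pi_N^{-1}$ is the marginal of $\mu_{\temp}$ at time $N$. This is immediate from the definition of an infinite-volume polymer measure and is the only place we use the DLR condition. In particular, the $\mu_{\temp}$-probability of any event depending only on coordinates in $[m,m+n]_{\Z}$ equals the corresponding $\mu^{m,N}_{x,\nu_N;\temp}$-probability as soon as $N\ge m+n$.

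Next I would apply Theorem~\ref{thm:all-space-time-point-compactness} with the parameters $v'=v$, $u_0=1/2$, $u_1=1$, which are chosen so that $[|v'|+u_1]=[|v|+1]$ and $[(u_1-u_0)^{-1}]=2$, matching exactly the arguments appearing in $n_0(\omega,m,[x],[|v|+1],2)$ in the lemma. Then for every $\temp\in(0,1]$ and every $N$ with $(N-m)/2\ge n\ge n_0$, the bound~\eqref{eq:compactness-control-by-terminal-measure-all-space-time} yields
\[
\mu^{m,N}_{x,\nu_N;\temp}\bigl\{\gamma:\max_{m\le i\le m+n}|\gamma_i-vi|\ge (R_1+2)n\bigr\}\le \nu_N\bigl([(v-\tfrac12)N,(v+\tfrac12)N]^c\bigr)+2e^{-\be n^{1/2}}.
\]
Crucially, the threshold $n_0$ is independent of $\temp$ and of the terminal measure $\nu$, so the same $n_0$ serves all temperatures simultaneously.

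Finally I would let $N\to\infty$. By Step~1, the left-hand side equals $\mu_{\temp}\{\gamma:\max_{m\le i\le m+n}|\gamma_i-vi|\ge (R_1+2)n\}$ and is independent of $N$. On the right-hand side, because $\mu_{\temp}\in\Pc^{m,+\infty}_{x;\temp}(v)$ satisfies the strong law of large numbers with slope $v$, one has $\gamma_N/N\to v$ $\mu_{\temp}$-a.s., hence
\[
\nu_N\bigl([(v-\tfrac12)N,(v+\tfrac12)N]^c\bigr)=\mu_{\temp}\bigl\{\gamma:|\gamma_N/N-v|>\tfrac12\bigr\}\longrightarrow 0,\qquad N\to\infty,
\]
by dominated convergence (the indicator is bounded). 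Passing to the limit yields the claim. There is no genuine obstacle: all the work has already been done in Theorem~\ref{thm:all-space-time-point-compactness}, whose uniformity in $\temp\in(0,1]$ was precisely designed for applications of this type, and the DLR property together with the SLLN handles the passage from finite to infinite volume without any further technicalities.
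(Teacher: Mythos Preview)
Your proposal is correct and follows essentially the same approach as the paper: both invoke the DLR property to write $\mu_\temp$ on $[m,N]_\Z$ as $\mu^{m,N}_{x,\nu_N;\temp}$ with $\nu_N=\mu_\temp\pi_N^{-1}$, apply Theorem~\ref{thm:all-space-time-point-compactness} with $(v',u_0,u_1)=(v,1/2,1)$, and let $N\to\infty$ using the LLN property of $\mu_\temp$ to kill the terminal-measure term. Your explicit verification that $[(u_1-u_0)^{-1}]=2$ and $[|v'|+u_1]=[|v|+1]$ match the arguments of $n_0$ is a nice touch that the paper leaves implicit.
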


\begin{proof}
  Applying Theorem~\ref{thm:all-space-time-point-compactness}
with $(v',u_0, u_1) = (v, 1/2, 1)$, when $(N-m)/2 > n$ we have
\begin{multline*}
  \mu_{\temp} \{ \gamma: \max_{m \le i \le m+n} |\gamma_i - vi| \ge (R_1 + 2)n \}
\\  = \mu_{x, \nu_N; \temp}^{m, N}\{ \gamma: \max_{m \le i \le m+n} |\gamma_i - vi| \ge (R_1 + 2)n
\} \\
\le \mu_{\temp} \pi_N^{-1} \big(  [N(v-1/2),N(v+1/2)]^c \big) + 2e^{-\be n^{1/2}}.
\end{multline*}
Since $\lim\limits_{N \to \infty}  \mu_{\temp}  \pi_N^{-1} \big(  [N(v-1/2), N(v+1/2)]^c \big) = 0$, (\ref{eq:tightness-IVPM-at-different-temp}) follows.
\end{proof}

\begin{lemma}
\label{lem:uniform-decay-for-marginal-measure}
For any~$\eps > 0$ and $\temp \in (0,1]$, if $n > n_0(\omega, m, [x], [|v| + \eps], [2\eps^{-1}])$, then 
\begin{equation}
  \label{eq:uniform-decay-for-marginal-measure}
  \mu_{\temp} \big(  [(m+n)(v-\eps), (m+n)(v+\eps)]^c \big)
  \le e^{-\be n^{1/2}}.
\end{equation}
\end{lemma}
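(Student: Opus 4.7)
The plan is to mimic the computation that just appeared at the end of the proof of part~\ref{item:existence-of-inifinte-polymer-measures} of Theorem~\ref{thm:invisid-limit-for-polymer-measures}, the only new ingredient being that $\mu_\temp$ is now an arbitrary element of $\Pc^{m,+\infty}_{x;\temp}(v)$ rather than a particular weak limit. The key structural fact is that any such $\mu_\temp$ is an infinite volume polymer measure, so by the DLR condition its projection onto $S^{m,N}_{x,*}$ coincides with the finite-volume polymer measure $\mu^{m,N}_{x,\nu_N;\temp}$, where $\nu_N=\mu_\temp\pi_N^{-1}$ is the $N$-marginal of $\mu_\temp$. In particular the inequalities of Theorem~\ref{thm:all-space-time-point-compactness} apply to $\mu_\temp$ itself, with $\nu=\nu_N$.

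First I would fix $n > n_0(\omega, m, [x], [|v|+\eps], [2\eps^{-1}])$ and, for any $N$ with $(N-m)/2 \ge n$, apply inequality~(\ref{eq:remote-control-all-space-time-all-space-time}) with parameters $v'=v$, $u_0=\eps/2$, $u_1=\eps$. This yields
\begin{equation*}
  \mu_\temp\pi_{m+n}^{-1}\bigl([(m+n)(v-\eps),(m+n)(v+\eps)]^c\bigr)
  \le \nu_N\bigl([N(v-\tfrac{\eps}{2}),N(v+\tfrac{\eps}{2})]^c\bigr) + e^{-\be n^{1/2}},
\end{equation*}
exactly as in the displayed inequality at the end of the proof of part~\ref{item:existence-of-inifinte-polymer-measures} (where there $\nu_N$ was the deterministic $\delta_{Nv}$).

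Next I would send $N\to\infty$ while keeping $n$ fixed. Since $\mu_\temp\in\Pc^{m,+\infty}_{x;\temp}(v)$, by definition $\mu_\temp$ satisfies SLLN with slope $v$, and hence in particular LLN with slope $v$; this is the statement that $\nu_N([N(v-\delta),N(v+\delta)])\to 1$ as $N\to\infty$ for every $\delta>0$. Applied with $\delta=\eps/2$, the first term on the right-hand side vanishes in the limit, giving the desired bound $e^{-\be n^{1/2}}$. No real obstacle arises: the whole argument is the same two-line reduction used in the previous proof, with the deterministic endpoint $\delta_{Nv}$ replaced by the genuinely random marginal $\nu_N$, whose LLN at slope $v$ is precisely what the membership of $\mu_\temp$ in $\Pc^{m,+\infty}_{x;\temp}(v)$ supplies.
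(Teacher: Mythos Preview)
Your proposal is correct and follows essentially the same approach as the paper, which simply remarks that the proof is similar to that of Lemma~\ref{lem:tightness-IVPM-with-different-temp}. That lemma's proof carries out exactly the steps you describe: use the DLR property to write the projection of $\mu_\temp$ onto $S^{m,N}_{x,*}$ as $\mu^{m,N}_{x,\nu_N;\temp}$ with $\nu_N=\mu_\temp\pi_N^{-1}$, apply the relevant inequality from Theorem~\ref{thm:all-space-time-point-compactness}, and send $N\to\infty$ using the LLN with slope~$v$ satisfied by~$\mu_\temp$.
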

\begin{proof}
  The proof is similar to that of Lemma~\ref{lem:tightness-IVPM-with-different-temp}.
\end{proof}

\begin{lemma}
\label{lem:approximate-by-finite-support-terminal-measures}
There are a constant $c>0$ and terminal measures~$\big( \nu^N_{ \temp} \big)_{N > m,\ \temp \in (0,1]}$
satisfying 
  \begin{equation}
\label{eq:finite-support-assumption}
\nu^N_{\temp} \big( [-cN, cN]^c \big)=0, \quad N > m \vee 0,\ \temp \in (0,1], 
\end{equation}
such that for each $\temp$, $\mu_{\temp}$ is the weak limit of $\mu_{x, \nu^N_{\temp}; \temp}^{m,
  N}$ as $N \to \infty$.
\end{lemma}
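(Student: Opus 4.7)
The plan is to take $\nu^N_{\temp}$ to be the time-$N$ marginal of $\mu_{\temp}$, truncated to a linearly growing window $[-cN,cN]$ and renormalized; the truncation error is negligible thanks to the exponential tail bound of Lemma~\ref{lem:uniform-decay-for-marginal-measure}. The main inputs are just the DLR property built into the definition of $\Pc^{m,+\infty}_{x;\temp}(v)$, together with the tightness estimate that has already been proved.

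First, fix $\temp\in(0,1]$. Because $\mu_{\temp}\in\Pc^{m,+\infty}_{x;\temp}(v)$ is an infinite-volume polymer measure, for every $N>m$ there is a probability measure $\nu_{\temp,N}$ on $\R$ such that the projection of $\mu_{\temp}$ onto $S^{m,N}_{x,*}$ equals $\mu^{m,N}_{x,\nu_{\temp,N};\temp}$. Since the time-$N$ marginal of $\mu^{m,N}_{x,\nu_{\temp,N};\temp}$ is $\nu_{\temp,N}$ itself (immediate from the definition $\mu^{m,N}_{x,\nu;\temp}=\int\mu^{m,N}_{x,y;\temp}\,\nu(dy)$), we must have $\nu_{\temp,N}=\mu_{\temp}\pi_N^{-1}$. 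Choose $c=|v|+1$, so that $[N(v-1),N(v+1)]\subset[-cN,cN]$ for every $N\ge 0$. Lemma~\ref{lem:uniform-decay-for-marginal-measure} applied with $\eps=1$ then produces $N_0$ (depending on $\omega,m,[x],v$) such that, for every $N-m\ge N_0$,
$$a_N(\temp):=\nu_{\temp,N}\bigl([-cN,cN]\bigr)\ge 1-e^{-\be(N-m)^{1/2}}.$$
Define $\nu^N_{\temp}:=a_N(\temp)^{-1}\,\nu_{\temp,N}|_{[-cN,cN]}$ for such $N$, and set $\nu^N_{\temp}=\delta_x$ for the finitely many smaller $N$; each $\nu^N_{\temp}$ then satisfies the support condition \eqref{eq:finite-support-assumption}.

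To obtain the weak convergence $\mu^{m,N}_{x,\nu^N_{\temp};\temp}\to\mu_{\temp}$, I would use linearity of $\nu\mapsto\mu^{m,N}_{x,\nu;\temp}$ to decompose
$$\mu^{m,N}_{x,\nu_{\temp,N};\temp}=a_N(\temp)\,\mu^{m,N}_{x,\nu^N_{\temp};\temp}+\int_{[-cN,cN]^c}\mu^{m,N}_{x,y;\temp}\,\nu_{\temp,N}(dy),$$
and read off that the total variation distance between $\mu^{m,N}_{x,\nu^N_{\temp};\temp}$ and the projection $\mu^{m,N}_{x,\nu_{\temp,N};\temp}$ of $\mu_{\temp}$ onto $S^{m,N}_{x,*}$ is at most $2(1-a_N(\temp))\le 2e^{-\be(N-m)^{1/2}}$. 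Since total variation does not increase under projection, for each fixed $n\ge m$ the restrictions to coordinates $[m,n]_{\Z}$ of the two measures differ by at most $2e^{-\be(N-m)^{1/2}}$ in total variation, and in particular the restriction of $\mu^{m,N}_{x,\nu^N_{\temp};\temp}$ converges in total variation to the restriction of $\mu_{\temp}$. Weak convergence on $S^{m,+\infty}_{x,*}\cong\R^{\N}$ is equivalent to convergence of all finite-dimensional marginals, so this finishes the proof.

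There is no substantive obstacle: the argument is a direct combination of the DLR representation and the tail bound, and it requires no uniformity in $\temp$ (the statement is pointwise in $\temp$), so the blow-up of $\be=\kappa^{-1}$ as $\temp\downarrow 0$ is harmless. The only mild point worth double-checking is that the quantifier structure of Lemma~\ref{lem:uniform-decay-for-marginal-measure}, together with the $\temp$-independence of the threshold $n_0$ coming from Theorem~\ref{thm:all-space-time-point-compactness}, is enough to let a single $c=|v|+1$ and a single $N_0$ serve for all $\temp\in(0,1]$ in the construction above.
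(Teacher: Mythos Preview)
Your proof is correct and follows essentially the same route as the paper: truncate and renormalize the time-$N$ marginal of $\mu_{\temp}$ to a window of linear size, then use the DLR representation together with the smallness of the discarded tail to show convergence of finite-dimensional distributions. The only cosmetic difference is that the paper invokes the qualitative SLLN built into $\mu_{\temp}\in\Pc^{m,+\infty}_{x;\temp}(v)$ to get $\mu_{\temp}\pi_N^{-1}\bigl([N(v-1),N(v+1)]^c\bigr)\to 0$, whereas you cite the quantitative bound of Lemma~\ref{lem:uniform-decay-for-marginal-measure}; either suffices here.
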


\begin{proof}
Let us define $\nu_{\temp}^N$ as follows:
\begin{equation*}
  \nu_{\temp}^N(A)  = \big( D_{\temp}^N \big)^{-1}
\mu_{\temp} \pi_N^{-1} \big( A \cap B_N \big),  \quad A \subset \mathcal{B}(\R),
\end{equation*} 
where $B_N = [ N(v-1), N(v+1)]$ and~$D_{\temp}^N =  \mu_{\temp} \pi_N^{-1} \big( B_N ) $.
For any $n > m$ and any Borel set $\Lambda \subset \mathcal{B}(\R^{n-m})$, we have
\begin{align*}
& \quad|  \mu_{\temp}\pi_{m,n}^{-1} (\Lambda) - \mu_{x, \nu^N_{\temp}; \temp}^{m,  N}
  \pi_{m,n}^{-1}(\Lambda)| \\
  &\le     |  \mu_{\temp}\pi_{m,n}^{-1} (\Lambda) - D_{\temp}^N\mu_{x, \nu^N_{\temp}; \temp}^{m,  N}
    \pi_{m,n}^{-1}(\Lambda)|
    + (1 - D_{\temp}^N ) \mu_{x, \nu^N_{\temp}; \temp}^{m,  N}    \pi_{m,n}^{-1}(\Lambda) \\
  &\le \nu_{\temp}^N(B^c_N)  + (  1 - D_{\temp}^N )  \mu_{x, \nu^N_{\temp}; \temp}^{m,  N}
    \pi_{m,n}^{-1}(\Lambda). 
\end{align*}
The right hand side goes to zero, since $\mu_{\temp} \in \Pc^{m,+\infty}_x(v)$ implies that 
\begin{equation*}
1 - D_{\temp}^N =  \nu_{\temp}^N(B^c_N) = \mu_{\temp}\pi^{-1}_N \big(  [N(v-1), N(v+1)]^c \big) \to
0, \quad N \to \infty.
\end{equation*}
This shows that $\mu_{\temp}$ is the weak limit of $\mu_{x, \nu^N_{\temp}; \temp}^{m,N}$ and
completes the proof.
\end{proof}

Let us recall that the {\it locally uniform} (LU) topology on $C(\R^{d})$ is defined by the metric
\[
 d(f,g)=\sum_{k=1}^{\infty} 2^{-k} \left(1\wedge \sup_{|x|\le k} |f(x)-g(x)|\right),\quad f,g\in C(\R^{d}).
\]
Convergence in this metric (also called {\it LU-convergence}) is equivalent to uniform convergence on
every compact subset of $\R^{d}$. LU-precompactness of a family $(f_n)$ is equivalent to equicontinuity
and uniform boundedness of $(f_n)$ on every compact set.

Before we continue on properties of $\big( \mu_{\temp} \big)$, let us state the following lemma, whose proof will be given at the end of this section.
\begin{lemma}
  \label{lem:tightness-of-log-density}
  Let $\omega \in \Omega'$ and $m, n \in \Z$ with $m < n$.
  Suppose a family of probability measures $\bigl( \nu_{\temp}^N \bigr)_{N > n, \temp \in (0,1]}$
satisfies~(\ref{eq:finite-support-assumption}) for some constant $c$. 
For $n < N$, let~$f^N_{m,n; \temp}(x,\cdot)$ be the density of $\mu_{x, \nu_{\temp}^N; \temp}^{m,N} \pi_n^{-1}$, namely,
\begin{equation*}
f^N_{m,n; \temp}(x,y) = \int_{-cN}^{cN} \frac{Z^{m,n}_{x,y; \temp} Z^{n, N}_{y, z;
    \temp}}{Z^{m,N}_{x,z; \temp}}\,  \nu^N_{\temp} (dz).
\end{equation*}
Then,  $\Big(  \temp \ln f^N_{m,n; \temp}(\cdot, \cdot) \Big)_{N
  > n,\ \temp \in (0,1]}$ is an LU-precompact family of continuous functions.
\end{lemma}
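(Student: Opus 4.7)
The plan is to decompose
\[
f^N_{m,n;\kappa}(x,y) = Z^{m,n}_{x,y;\kappa}\cdot h^N_{m,n;\kappa}(x,y),\qquad h^N_{m,n;\kappa}(x,y) = \int_{-cN}^{cN}\frac{Z^{n,N}_{y,z;\kappa}}{Z^{m,N}_{x,z;\kappa}}\,\nu^N_{\kappa}(dz),
\]
and establish, for each compact $K\subset\R^2$, uniform boundedness and equicontinuity of $\kappa\ln Z^{m,n}_{x,y;\kappa}$ and $\kappa\ln h^N_{m,n;\kappa}(x,y)$ separately on $K$, with constants independent of $N>n$ and $\kappa\in(0,1]$. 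By Arzel\`a--Ascoli this gives LU-precompactness of the sum.

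For the finite-horizon factor I would use the discrete Brownian-bridge identity
\[
\hat Z^{m,n}_{x,y;\kappa} = g_{(n-m)\kappa}(y-x)\,e^{-\kappa^{-1}F_n(y)}\,\mathbf{E}_{BB(x,y,\kappa)}\Bigl[e^{-\kappa^{-1}\sum_{k=m+1}^{n-1}F_k(X_k)}\Bigr],
\]
where $BB(x,y,\kappa)$ is the law of the $(n-m)$-step Gaussian bridge from $(m,x)$ to $(n,y)$ with step variance $\kappa$. After taking $\kappa\ln$, the Gaussian prefactor contributes $-(y-x)^2/(2(n-m)) - (\kappa/2)\ln(2\pi(n-m)\kappa)$, which is continuous in $(x,y)$ and bounded on compacts uniformly in $\kappa\in(0,1]$; the $-F_n(y)$ term is locally Lipschitz by \ref{item:smooth}. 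For the bridge expectation, on $\Omega'$ the quantities $\sup_{|\xi|\le R}|F_k(\xi)|$ are finite for every $R$ and grow at most logarithmically in $R$ by~\ref{item:exponential-moment-for-maximum} and Borel--Cantelli, while Gaussian tail estimates confine the bridge path to a bounded window uniformly in $\kappa\in(0,1]$; these two facts give a uniform-in-$\kappa$ bound on $\kappa\ln\mathbf{E}_{BB}[\cdot]$ on $K$. Local Lipschitz continuity in $(x,y)$ follows by a Gaussian-tilt change of measure that affects only the two boundary increments, yielding a tilt factor bounded by $\exp\bigl(\kappa^{-1}L(|x-x'|+|y-y'|)\bigr)$ with $L=L(\omega,K,m,n)$.

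For the long-horizon factor I would write
\[
\frac{h^N_{m,n;\kappa}(x,y)}{h^N_{m,n;\kappa}(x,y')} = \int \frac{Z^{n,N}_{y,z;\kappa}}{Z^{n,N}_{y',z;\kappa}}\,\rho(dz),\qquad
\frac{h^N_{m,n;\kappa}(x,y')}{h^N_{m,n;\kappa}(x',y')} = \int \frac{Z^{m,N}_{x',z;\kappa}}{Z^{m,N}_{x,z;\kappa}}\,\rho'(dz),
\]
where $\rho$ and $\rho'$ are the probability measures on $[-cN,cN]$ obtained by re-weighting $\nu^N_\kappa$ by the normalized integrands of $h^N_{m,n;\kappa}(x,y')$ and $h^N_{m,n;\kappa}(x',y')$. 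Fixing an intermediate length $n_0$ large enough for Theorem~\ref{thm:all-space-time-point-compactness} to apply, I would expand
\[
Z^{n,N}_{y,z;\kappa} = \int Z^{n,n+n_0}_{y,u;\kappa}\,Z^{n+n_0,N}_{u,z;\kappa}\,du
\]
(and symmetrically for the other factor), and rewrite each partition-function ratio as the expectation of a finite-horizon ratio $Z^{n,n+n_0}_{y,u;\kappa}/Z^{n,n+n_0}_{y',u;\kappa}$ against the time-$(n+n_0)$ marginal of an appropriate polymer measure on $[n,N]_{\Z}$. Because $\nu^N_\kappa$ is supported on $[-cN,cN]$, that marginal has bounded asymptotic slope, and Theorem~\ref{thm:all-space-time-point-compactness} confines $u$ to a bounded window around $y'$ with $\mu$-mass at least $1-e^{-\kappa^{-1}\sqrt{n_0}}$; on this window the finite-horizon Lipschitz bound from the previous paragraph gives $|\kappa\ln(Z^{n,n+n_0}_{y,u;\kappa}/Z^{n,n+n_0}_{y',u;\kappa})|\le L|y-y'|$, and the exceptional set contributes at most a $\kappa\ln$-bounded correction. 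The same reasoning handles the $(x,x')$-ratio, and uniform boundedness of $\kappa\ln h^N_{m,n;\kappa}$ on compacts follows in parallel by using the Chapman--Kolmogorov factorization to compare $Z^{m,N}_{x,z;\kappa}$ with $Z^{m,n+n_0}_{x,u;\kappa}\,Z^{n+n_0,N}_{u,z;\kappa}$ for $u$ in the straightness window.

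The main obstacle is that constants must be uniform in both $N\to\infty$ and $\kappa\to 0$. This is exactly what the uniform-in-$\kappa$ straightness of Section~\ref{sec:delta-straightness-and-tightness} provides: it lets us replace the integrations against $\nu^N_\kappa$ by integrations effectively localized on an $(N,\kappa)$-independent bounded $u$-window, reducing the problem to the finite-horizon Brownian-bridge analysis of the first paragraph.
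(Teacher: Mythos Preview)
Your strategy matches the paper's: split $f=Z^{m,n}\cdot g$, reduce to LU-precompactness of $\kappa\ln g$, and use the uniform-in-$\kappa$ straightness of Section~\ref{sec:delta-straightness-and-tightness} to localize the long-horizon integrals to a bounded window, after which only finite-horizon estimates remain. The paper implements this somewhat differently: it truncates the intermediate integrations \emph{explicitly} (defining $\bar Z^{n,N}_{y,z;\kappa}$ by restricting $\gamma_{n+1}$ to $[-rs_2,rs_2]$, and similarly $\bar Z^{m,N}_{x,z;\kappa}$ at times $m+1$ and $n$), shows via straightness that each truncated $\bar Z$ is within a factor $e^{\pm\varepsilon}$ of the full $Z$, and then works entirely with the truncated quantities. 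Because the truncation is at one step, the $(x,y)$-dependence of $\bar Z^{n,N}_{y,z;\kappa}$ sits solely in the Gaussian factor $e^{-\kappa^{-1}(y-w)^2/2}$ with $|w|\le rs_2$, so the Lipschitz bound $r(s_1+s_2)|y-y'|$ is immediate---no Brownian-bridge change-of-measure is needed. Your $n_0$-step expansion trades this for a finite-horizon Lipschitz estimate, which is fine but less direct.

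One step in your sketch is not quite right as written. When you express $Z^{n,N}_{y,z}/Z^{n,N}_{y',z}$ as the expectation of $Z^{n,n+n_0}_{y,u}/Z^{n,n+n_0}_{y',u}$ against the $(n{+}n_0)$-marginal of $\mu^{n,N}_{y',z;\kappa}$ and then say ``the exceptional set contributes at most a $\kappa\ln$-bounded correction,'' note that the integrand is \emph{unbounded} on the exceptional set (it grows like $\exp(\kappa^{-1}C|u|\,|y-y'|)$ as $|u|\to\infty$), so small $\mu$-mass alone does not control that piece. You can close this by applying straightness to \emph{both} the $y$- and $y'$-polymers and truncating numerator and denominator separately (which yields a self-referential inequality $R\le e^{\kappa^{-1}L|y-y'|}+R\cdot 2e^{-\kappa^{-1}\sqrt{n_0}}$ that solves for $R$); but this is exactly the explicit-truncation route the paper takes. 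Also, your ``uniform boundedness follows in parallel'' needs an argument: the paper gets it by observing that the truncated density $\bar f^N_\kappa(x,\cdot)$ integrates to $1$ over $[-rs_1,rs_1]$ and combining this with the Lipschitz bound, which is a short but not entirely obvious step.
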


\begin{lemma}
\label{lem:compactness-of-density-of-mu-kappa}
Let $m\in\Z$. There is an LU-precompact family of continuous functions $\big( h_{n;\temp}(\cdot) \big)_{n>m}$ such that the density of  $\mu_{\temp}
\pi^{-1}_n$ can be expressed as
\begin{equation}
  \label{density-expression-for-mu-kappa}
  \frac{d \, \mu_{\temp} \pi^{-1}_n}{dy}
  =  \frac{Z^{m,n}_{x,y; \temp} e^{- \be h_{n; \temp}(y)}}{ \int_{\R} Z^{m,n}_{x,y'; \temp} e^{- \be h_{n; \temp}(y')} \, dy'}.
\end{equation}
\end{lemma}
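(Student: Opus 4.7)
The plan is to realize each $h_{n;\temp}$ as a limit, along a subsequence of finite horizons $N_k \to \infty$, of log-normalized conditional weights arising from the finite-volume approximations of $\mu_\temp$ provided by Lemma~\ref{lem:approximate-by-finite-support-terminal-measures}. Concretely, fix $\omega \in \Omega'$ and let $(\nu^N_\temp)$ be terminal measures supported in $[-cN, cN]$ such that $\mu^{m,N}_{x,\nu^N_\temp;\temp}$ converges weakly to $\mu_\temp$ as $N \to \infty$. The density $f^N_{m,n;\temp}(x,\cdot)$ of $\mu^{m,N}_{x,\nu^N_\temp;\temp}\pi_n^{-1}$ in Lemma~\ref{lem:tightness-of-log-density} factors as
\[
f^N_{m,n;\temp}(x,y) = Z^{m,n}_{x,y;\temp}\, r^N_{n;\temp}(y), \qquad r^N_{n;\temp}(y) := \int_{-cN}^{cN} \frac{Z^{n,N}_{y,z;\temp}}{Z^{m,N}_{x,z;\temp}}\, \nu^N_\temp(dz),
\]
since $Z^{m,n}_{x,y;\temp}$ does not depend on the endpoint $z$. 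Setting $h^N_{n;\temp}(y) := -\temp \ln r^N_{n;\temp}(y)$ yields $f^N_{m,n;\temp}(x,y) = Z^{m,n}_{x,y;\temp}\, e^{-\be h^N_{n;\temp}(y)}$.

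Next I would invoke Lemma~\ref{lem:tightness-of-log-density} to conclude that $\bigl(\temp \ln f^N_{m,n;\temp}(x,\cdot)\bigr)$ is LU-precompact in $N$ and $\temp$. The map $y \mapsto \temp \ln Z^{m,n}_{x,y;\temp}$ is continuous in $y$, and a direct Gaussian estimate (using the outer kernel $g_\temp(y - x_{n-1}) e^{-\be F_n(y)}$ in the definition of $Z^{m,n}_{x,y;\temp}$) shows that this family is LU-bounded and equicontinuous uniformly in $\temp \in (0,1]$ on any compact set in $y$. Writing
\[
h^N_{n;\temp}(y) = \temp \ln Z^{m,n}_{x,y;\temp} - \temp \ln f^N_{m,n;\temp}(x,y),
\]
the family $\{h^N_{n;\temp}\}$ is therefore itself LU-precompact over $N > n$, $n > m$, and $\temp \in (0,1]$. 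A diagonal extraction over a countable dense set of indices $(n,\temp)$ and a subsequence $N_k \to \infty$ produces continuous limits $h_{n;\temp}$, and the inherited family $\{h_{n;\temp}\}_{n>m,\temp\in(0,1]}$ remains LU-precompact.

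It remains to identify the density of $\mu_\temp \pi_n^{-1}$. Along the extracted subsequence, $Z^{m,n}_{x,y;\temp} e^{-\be h^{N_k}_{n;\temp}(y)}$ converges LU to $Z^{m,n}_{x,y;\temp} e^{-\be h_{n;\temp}(y)}$, which is continuous and positive. Combining this LU-convergence with the weak convergence $\mu^{m,N_k}_{x,\nu^{N_k}_\temp;\temp}\pi_n^{-1} \to \mu_\temp \pi_n^{-1}$ and the no-escape-of-mass estimate from Lemma~\ref{lem:tightness-IVPM-with-different-temp}, the measure $\mu_\temp \pi_n^{-1}$ is absolutely continuous with density proportional to $Z^{m,n}_{x,y;\temp} e^{-\be h_{n;\temp}(y)}$. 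Dividing by $\int_\R Z^{m,n}_{x,y';\temp} e^{-\be h_{n;\temp}(y')}\,dy'$ (which is finite and nonzero by the above) yields the stated formula; $h_{n;\temp}$ is only determined up to an additive constant, which is immaterial.

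The main obstacle is passing from LU-convergence of the densities $f^{N_k}_{m,n;\temp}(x,\cdot)$ to the identification of the weak limit as an absolutely continuous measure with the LU limit as its density, without losing any mass in the limit. This relies critically on the straightness estimates from Theorem~\ref{thm:all-space-time-point-compactness} (applied via Lemma~\ref{lem:tightness-IVPM-with-different-temp}), which give stretched-exponential decay of $\mu_\temp\pi_n^{-1}$ outside a linear box and are compatible with the Gaussian tail decay of $Z^{m,n}_{x,y;\temp}$ as $|y| \to \infty$, ensuring that the LU-convergent densities together with tightness force convergence in $L^1$ on the whole line.
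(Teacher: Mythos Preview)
Your proposal is correct and follows essentially the same route as the paper: use Lemma~\ref{lem:approximate-by-finite-support-terminal-measures} to approximate $\mu_\temp$ by finite-horizon polymers, invoke Lemma~\ref{lem:tightness-of-log-density} for LU-precompactness of the log-densities, subtract $\temp\ln Z^{m,n}_{x,y;\temp}$ to obtain $h_{n;\temp}$, and identify the limit density via weak convergence plus tightness. The only superfluous step is the diagonal extraction over a countable dense set of~$(n,\temp)$: since for each fixed $(n,\temp)$ the weak limit $\mu_\temp\pi_n^{-1}$ is already determined, any subsequential LU limit of the densities must coincide with it, so the full sequence converges and no extraction is needed (this also resolves the issue that your extraction only defines $h_{n;\temp}$ on a countable set of $\temp$).
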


\begin{proof}
By Lemma~\ref{lem:approximate-by-finite-support-terminal-measures}, there are terminal measures~$\nu^N_{\temp}$ satisfying~\eqref{eq:finite-support-assumption} such that~$\mu_{\temp}$ is the weak
limit of $\mu^{m,N}_{x, \nu^N_{\temp}; \temp}$.  Suppose $f_{n; \temp}^N(\cdot)$ is the density of $\mu^{m,N}_{x, \nu^N_{\temp}; \temp} \pi^{-1}_n$, 
then by Lemma~\ref{lem:tightness-of-log-density}, $\big( \temp\log f_{n; \temp}^N \big)_{N > n, \temp \in (0,1]}$ is LU-precompact.  Therefore, for each~$\temp$, $\temp\log f_{n; \temp}^N$
converge in LU to some  continuous function $-\tilde{h}_{n; \temp}$ as $N \to \infty$, such that $e^{-\be \tilde{h}_{n; \temp}(y)}$ is the density of $\mu_{\temp}\pi^{-1}_n$.  The family of
functions $\big( \tilde{h}_{n; \temp} \big)_{\temp \in (0,1]}$ is also LU-compact.
One can then define~$h_{n; \temp}(y) = \tilde{h}_{n; \temp}(y) - \temp \ln Z^{m,n}_{x,y;\temp}$ and the lemma follows.
\end{proof}

We are now ready to prove the rest of Theorem~\ref{thm:invisid-limit-for-polymer-measures}.

\begin{proof}[Proof of parts~\ref{item:tightness-of-polymer-measures} and~ \ref{item:convergence-of-polymer-measures} in Theorem~\ref{thm:invisid-limit-for-polymer-measures}]
Part \ref{item:tightness-of-polymer-measures} follows from Lemma~\ref{lem:tightness-IVPM-with-different-temp}. 
Let us prove part~\ref{item:convergence-of-polymer-measures}.  Let $(m,x) \in \ZR$ and $\mu_{\temp} \in \Pc_x^{m,+\infty}(v)$.  Then Lemma~\ref{lem:compactness-of-density-of-mu-kappa} implies that, for each $n >
m$, there is an  LU-precompact family  of continuous functions~$h_{n; \temp}(y)$ such that~\eqref{density-expression-for-mu-kappa} holds.  Suppose $\mu$ is the weak limit of $\mu_{\temp_k}$ for some sequence
$\temp_k \downarrow 0$.  Using a diagonal sequence argument, we see that there is a further subsequence~$\temp'_k\downarrow 0$ such that
for every $n>m$, $h_{n; \temp'_k}(y)$ converge in LU to some~$h_n(y)$ as $\temp'_k\downarrow 0$.

For~$\eps > 0$, let us define the set of paths 
\begin{equation*}
\Lambda_{\eps}^n = \{ \gamma \in S_{x,*}^{m,+\infty}:  A^{m,n}(\gamma)  - A^{m,n} (\gamma_m,\gamma_n)> \eps  \},
\end{equation*}
where $A^{n_1, n_2}(x_1, x_2)$ denotes the minimal action between~$(n_1, x_1)$
and $(n_2, x_2)$.
Then we have 
\begin{equation*}
  \mu_{\temp} (\Lambda^n_{\eps})
  = \frac{ \int    Z^{m,n}_{x,y; \temp} (\Lambda^m_{\eps}) e^{- \be h_{n; \temp}(y)}
    \, dy  }{  \int Z^{m,n}_{x,y; \temp}e^{- \be h_{n; \temp}(y)} \,  dy
  }.
\end{equation*}

For every $\delta > 0$, there exists $L> 0$ such that 
$\mu_{\temp}\big(   B_L^{m,n} \big) \ge 1- \delta$ for all~$\temp \in (0,1]$,
where $B_L^{n,m} = \{ \gamma: |\gamma_i| \le L,\ m \le i \le n \}$.
Also, when~$\temp_k'$ is sufficiently small, we have 
\begin{equation*}
  |  h_{n; \temp'_k}(y) - h_n(y) | \le \eps/4, \quad |y| \le L.
\end{equation*}
Therefore, when $\temp_k'$ is small, 
\begin{equation}
\label{eq:3}
\begin{split}
  \mu_{\temp_k'}(\Lambda^n_{\eps})
  & \le    \mu_{\temp_k'}\big(  (B_L^{m,n} )^c \big)    +  \mu_{\temp_k'}(\Lambda^n_{\eps} \cap B_L^{m,n} )\\
&\le \delta + 
  \frac{ \int_{|y| \le L}    Z^{m,n}_{x,y; \temp_k'} (\Lambda^n_{\eps}  \cap B^{m,n}_L) e^{-(\temp_k')^{-1} h_{n; \temp_k'}(y)}
    \, dy  }{  \int_{|y| \le L} Z^{m,n}_{x,y; \temp_k'}  e^{- (\temp_k')^{-1} h_{n; \temp_k'}}\,  dy
  } \\
 & \le \delta +   e^{ (\temp_k')^{-1} \eps/2}\frac{ \int_{|y| \le L}    Z^{m,n}_{x,y; \temp_k'} (\Lambda^n_{\eps}  \cap
  B^{m,n}_L)  e^{- (\temp_k')^{-1} h_n(y)}
    \, dy  }{  \int_{|y| \le L} Z^{m,n}_{x,y; \temp_k'}  e^{-(\temp_k')^{-1} h_n(y)}\,  dy
  }.
\end{split}
\end{equation}
Due to the continuous dependence of action on paths and compactness of the set $[-L,L]$,
there is~$\eps_1>0$ such that, for each minimizer from $(m,x)$ to $(n,y)$, $|y| \le L$, 
the action of every path in the
$\eps_1$-neighborhood of that minimizer 
is at most $A^{m,n}(x,y) +
\eps/4$.  
(Here, if $\gamma^{*}$ is a path in $S^{m,n}_{*,*}$, its $\eta$-neighborhood is the set~$\{
\gamma \in S_{*,*}^{m,n}: |\gamma_k- \gamma^{*}_k| \le \eta, \ m \le k \le n \}$.)
Therefore, 
\begin{equation}
\label{eq:4}
Z^{m,n}_{x,y; \temp_k'} \ge \eps_1^{n-m} e^{- (\temp_k')^{-1} \big(  A^{m,n}(x,y) + \eps/4 \big)}, \quad |y| \le L.
\end{equation}
On the other hand,  one has
\begin{equation}
\label{eq:5}
 Z^{m,n}_{x,y; \temp_k'} \big(  \Lambda^n_{\eps} \cap B_L^{m,n}  \big)
  \le L^{n-m} e^{- (\temp_k')^{-1} \big(  A^{m,n}(x,y) - \eps \big)}, \quad |y| \le L.
\end{equation}
Combining~(\ref{eq:3}), (\ref{eq:4}) and~(\ref{eq:5}) together, we have 
\begin{equation*}
\mu_{\temp_k'}  (\Lambda^n_{\eps})
  \le \delta + \big(  L/\eps \big)^{n-m} e^{- (\temp_k')^{-1} \eps/4}.
\end{equation*}
Since $\Lambda^n_{\eps}$ is an open set, by weak convergence of $\mu_{\temp_k'}$, we
  have 
\begin{equation*}
\mu( \Lambda^n_{\eps}) \le \liminf_{k \to \infty} \mu_{\temp_k'}(\Lambda^n_{\eps})
\le \delta.
\end{equation*}
Since $\delta$ is arbitrary, we obtain $\mu(\Lambda^n_{\eps})= 0$.

The fact that $\mu(\Lambda^n_{\eps}) = 0$ for every $n$ and~$\eps$ implies that $\mu$ must be a measure on $S^{m,+\infty}_{x,*}$ that 
concentrates on semi-infinite minimizers. To identify the slope, we use Lemma~\ref{lem:uniform-decay-for-marginal-measure} and take~$\temp = \temp_k' \downarrow 0$ in \eqref{eq:uniform-decay-for-marginal-measure} and conclude
that for $\eps > 0$ and $n > n_0(\omega, m, [x], [|v| + \eps], [2\eps^{-1}])$, 
\begin{equation*}
\mu \big(  [(m+n)(v-\eps), (m+n)(v+\eps)]^c \big) = 0.
\end{equation*}
This shows that $\mu$ concentrates on the semi-infinite minimizers in~$\Pc^{m,+\infty}_{x;\temp}(v)$
and completes the proof of part~\ref{item:convergence-of-polymer-measures}.
\end{proof}

\begin{proof}[Proof of Theorem~\ref{thm:convergence-of-busemann-function}]
Part~\ref{item:convergence-of-PM-countable-temperature} follows from Theorem~\ref{thm:invisid-limit-for-polymer-measures}.

For any~$p \in \Z$, 
by~(\ref{eq:compactness-control-by-terminal-measure-all-space-time}) in
Theorem~\ref{thm:all-space-time-point-compactness}, for $(N_2-n) /2 \ge N_1 \ge n_1(n, p) =
n_0(\omega, n, p, [|v|+1],1)$, 
\begin{multline*}
\mu_{y, \nu; \temp}^{n, N_2} \pi_{n+1}^{-1} \big(   [- (|v| + R_1 + 2)N_1, (|v| + R_1 + 2)
N_1]^c\big)  \\
\le \nu \big(  [-(|v|+1)N_2, (|v|+1)N_2]^c \big) + 2e^{-\sqrt{N_1}}, 
\end{multline*}
for every terminal measure $\nu$,  all $\temp \in (0,1]$ and all~$y \in [p,p+1]$.  Taking $\nu = \delta_{N_2 v}$ and letting
$N_2 \to \infty$, we obtain 
\begin{multline}\label{eq:tail-estimate-for-one-step}
\mu_{y; v, \temp}^{n,+\infty}  \pi_{n+1}^{-1} \big(   [- (|v| + R_1 + 2)N_1, (|v| + R_1 + 2)
N_1]^c\big)
\le 2e^{-\sqrt{N_1}}, \\ y \in [p,p+1],\ N_1 \ge n_1(n,p).
\end{multline}
Combining this estimate with~(\ref{eq:derivative-of-U-n-positive-temperature}), we see that~$\big(  u_{v;\temp}(n,\cdot) \big)_{\temp \in (0,1]}$ is uniformly bounded on compact sets.

The first part of the theorem implies that  if 
$(n,y) \not\in \Nc$, then $\mu_{y;v , \temp}^{n,+\infty}$  converges weakly to $\delta_{\gamma_y^{n,+\infty}(v)}$.  Then
combining~(\ref{derivative-of-U-n-zero-temperature}), (\ref{eq:derivative-of-U-n-positive-temperature})
and~(\ref{eq:tail-estimate-for-one-step}), we obtain that 
\[
 u_{v;\temp}(n,y)= \int_\R ( z -y)  \pi_{y; v, \temp}^{n,+\infty}  \pi_{n+1}^{-1}(dz)
  \to  \int_\R ( z -y)  \delta_{\gamma^{n,+\infty}_y(v)} \pi_{n+1}^{-1}(dz) =u_{v;0} (n,y)
\]
for $(n,y) \not \in \Nc$.
Since $\Nc$ is at most countable, $u_{v;\temp}(n,\cdot)$ converges to
$u_{v;0}(n,\cdot)$ at a.e.\ $y$. This implies convergence in~$\GG$ and completes the proof of part~\ref{item:convergen-of-global-solution}.

Finally we will prove part~\ref{item:convergence-of-partition-function-ratio}.
Since the functions $G_{v,\temp}$ and $B_v$ satisfy the relations~(\ref{eq:cocycle-property-for-positive-temp-busemann})
and~(\ref{eq:cocycle-property-for-zero-temp-busemann}), respectively,
it suffices to show the following two limits hold: 
\begin{gather}
  \label{eq:n1-n2-equal}
\lim_{\mathcal{D} \ni\temp \downarrow 0}  -\temp \ln G_{v,\temp} \big(  (n,x), (n,0) \big) =  B_v \big(  (n,x),
(n,0) \big), \quad n \in \Z,\ x\in\R, \\
\label{eq:n1-n2-not-equal}
\lim_{\mathcal{D} \ni \temp \downarrow 0}- \temp \ln G_{v,\temp} \big(  (m,x), (n,0) \big) = B_v \big(  (m,x), (n,0)
\big), \quad n > m, \ x\in\R,
\end{gather}

We recall $U_{v,\temp}$, $\temp \in [0,1]$, defined in
Theorems~\ref{thm:zero-temperature-infinite-minimizers} and~\ref{thm:positive-temperature-IVPM}.
The limit~(\ref{eq:n1-n2-equal}) is equivalent to $U_{v,0}(n,x) = \lim_{\temp \downarrow 0}U_{v,\temp}(n,x)$.

Having shown that~$\big( u_{v;\temp}(n,\cdot) \big)_{\temp \in (0,1]}$ is uniformly bounded and that
$u_{v;\temp}(n,\cdot)$ converge to $u_{v;0}(n,\cdot)$ a.e.\ as $\temp \downarrow 0$, we can use bounded convergence
theorem to conclude that
\begin{equation*}
U_{v;\temp}(n,x) = \int_0^x u_{v;\temp}(n,y) \, dy \to U_{v;0}(n,x) = \int_0^x u_{v;0} (n,y) \, dy, \quad
\temp \downarrow 0.
\end{equation*}
This proves~(\ref{eq:n1-n2-equal}), and the convergence is in LU.

To prove~\eqref{eq:n1-n2-not-equal}, we fix $n > m$ and define $H_{\temp}(x) = - \kappa \ln G_{v;\temp} \big(  (m,x), (n,0)\big)$,  $\temp \in
(0,1]$, and ${H_0(x) = B_v\big(  (m,x), (n,0) \big)}$. 
We are going to show that $\big( H_{\temp}(\cdot) \big)_{\temp \in \mathcal{D}}$ is LU-precompact,
and that $\lim_{\temp \downarrow 0} H_{\temp}(x) = H_0(x)$ for $x \not\in \Nc$ (and hence for a.e.~$x$).
Then the the convergence will hold for all~$x$ and ~(\ref{eq:n1-n2-not-equal}) will follow.

As a consequence of  Lemma~\ref{lem:tightness-of-log-density} applied to $\nu_N = \delta_{vN}$, we
see that the family $ (\temp \ln Z^{n,N}_{y,vN; \temp}/Z^{m,N}_{x,vN; \temp})_{\temp \in \mathcal{D},\ N >  n}$ is
 LU-precompact in $C(\R^2)$ in the variables $x$ and $y$.  Hence, by
part~\ref{item:ration-of-partition-function} of Theorem~\ref{thm:positive-temperature-IVPM} and the
condition~$\omega \in \hat{\Omega}_v \subset \Omega_{v; \temp}$, 
we have that
$\big( \temp\ln G_{v,\temp}\big( (n,y), (m,x) \big) \big)_{\temp \in \mathcal{D}}$ is LU-precompact.
This shows the LU-precompactness of~$\big( H_{\temp} \big)_{\temp \in \mathcal{D}}$.

Using~(\ref{eq:partition-function-ration-solves-burges}), we have 
\begin{multline*}
  G_{v; \temp} \big(  (m,x), (n,0) \big) = \int_{\R} Z^{m,n}_{x,y; \temp} e^{- \be U_{v;\temp}(n,y) } \, dy
 \\ =  \int_{\gamma \in  S_{x,*}^{m,n}} e^{-\be \big(  A^{m,n}(\gamma) +    U_{v;\temp}(n,y) \big)} \, d \gamma.
\end{multline*}

For every $\delta > 0$, there is $L > 0$ such that $\mu_{x; v, \kappa}^{m,+\infty} \big(
B_L^{m,n} \big) \ge 1 - \delta$ for all $\kappa$.  Then
\begin{multline*}
 \int_{\gamma \in  S_{x,*}^{m,n} \cap B_L^{m,n}} e^{-\be \big(  A^{m,n}(\gamma) +
    U_{v;\temp}(n,y) \big)} \, d \gamma \\ \ge (1-\delta)  \int_{\gamma \in  S_{x,*}^{m,n}} e^{-\be \big(  A^{m,n}(\gamma) +
    U_{v;\temp}(n,y) \big)} \, d \gamma,
\end{multline*}
which follows from
\begin{align*}
\mu_{x; v, \kappa}^{m,+\infty} \big(
B_L^{m,n} \big)
&= \frac{ \int_{|y| \le L} \mu^{m,n}_{x,y; \temp} (B_L^{m,n}) Z^{m,n}_{x,y;\temp} e^{-\temp^{-1}
    U_{n,\temp}(y)} dy
}{\int_{\R}
  Z^{m,n}_{x,y';\temp} e^{-\temp^{-1} U_{n,\temp}(y')}dy'}\\
&=\frac{\int_{\gamma \in  S_{x,*}^{m,n} \cap B_L^{m,n}} e^{-\be \big(  A^{m,n}(\gamma) +
    U_{v;\temp}(n,y) \big)} \, d \gamma }{\int_{\gamma \in  S_{x,*}^{m,n}} e^{-\be \big(  A^{m,n}(\gamma) +
    U_{v;\temp}(n,y) \big)} \, d \gamma}.
  \end{align*}
Therefore, 
\begin{equation}
  \label{eq:8}
  \begin{split}
  G_{v; \temp} \big(  (m,x), (n,0) \big)
  &\le (1-\delta)^{-1}
   \int_{\gamma \in  S_{x,*}^{m,n} \cap B_L^{m,n}} e^{-\be \big(  A^{m,n}(\gamma) +
    U_{v;\temp}(n,y) \big)} \, d \gamma \\
  &\le (1-\delta)^{-1}  L^{ m-n }
  e^{ - \be \inf_{|y| \le L} \{ A^{m,n}(x,y) + U_{v;\temp}(n,y) \}}.
  \end{split}  
\end{equation}
By~(\ref{eq:n1-n2-equal}) and~(\ref{eq:busemann-function-solving-invisid-burgers}),
\begin{equation}\label{eq:9}
  \begin{split}
      \liminf_{ \temp \to 0}\inf_{|y| \le L} \{A^{m,n}(x,y) + U_{n,\temp}(y)\}
&  \ge \inf_{|y| \le L} \{ A^{m,n}(x,y) + B_v \big(  (n,y), (n,0) \big) \}\\
&  \ge B_v \big(  (m,x),(n,0) \big).
  \end{split}
\end{equation}
Taking logarithm and multiplying by $-\temp$ in~(\ref{eq:8}) 
and using~(\ref{eq:9}), we obtain that 
\begin{equation*}
\liminf_{\temp \to 0} H_{\temp}(x) \ge H_0(x).
\end{equation*}
Let us fix $\eps > 0$ and define 
\begin{equation*}
  y_0 = \big(  \gamma_x^{m,+\infty}(v) \big)_n
  = \argmin\limits_y \{  A^{m,n}_{x,y} + U_{n,0}(y) \}.
\end{equation*}
There is an
$\eps_1$-neighborhood of  
$\pi_{m,n} \big( \gamma_x^{m,+\infty}(v)  \big)$  such that for each
path~$\gamma$ in this neighborhood, 
\begin{equation*}
|A^{m,n}(\gamma) - A^{m,n}(x,y_0) | \le \eps.
\end{equation*}
Also, by the continuity of~$U_{v;0}(n,\cdot)$ and the LU-convergence of~$U_{v;\temp}(n,\cdot)$ to~$U_{v;0}(n,\cdot)$, there is $\eps_2 >0$ such that  when $\temp$ is small enough we have 
\begin{equation*}
| U_{v;\temp}(n,y) - U_{v; 0}(n,y_0)| \le \eps
\end{equation*}
for every $|y - y_0| \le \eps_{2}$.
Therefore, 
\begin{equation*}
    \begin{split}
G_{v; \temp} \big(  (m,x), (n,0) \big)
  &\ge \big(  \eps_1 \wedge \eps_2 \big)^{n-m}
  e^{- \be \big(  A^{m,n} (  x,y_0 ) + U_{v;0}(n,y_0)  + 2\eps \big)}\\
&  =  \big(  \eps_1 \wedge \eps_2 \big)^{n-m}
  e^{- \be \big(  B_v \big(  (m,x), (n,0) \big) + 2\eps \big)}.    
  \end{split}
\end{equation*}
This implies that 
\begin{equation*}
\limsup_{\mathcal{D} \ni \temp \to 0} H_{\temp}(x) \le H_0(x) + 2\eps.
\end{equation*}
Since $\eps$ is arbitrary, this concludes the proof.
\end{proof}

In the end of this section we give the proof of the technical
Lemma~\ref{lem:tightness-of-log-density}.

\begin{proof}[Proof of Lemma~\ref{lem:tightness-of-log-density}]
We define 
\begin{equation*}
g^N_{\temp}(x,y) = \big( Z^{m,n}_{x,y; \temp} \big)^{-1}f^N_{m,n; \temp}(x,y) =   \int_{-cN}^{cN} \frac{ Z^{m,N}_{y,z; \temp}}{
  Z^{n,N}_{x,z; \temp}}\, \nu_{\temp}^N(dz).
\end{equation*}
It suffices to show that $\Big(  \temp \ln g^N_{ \temp}(\cdot, \cdot) \Big)_{N
  > n , \temp \in (0,1]}$ is LU-precompact.
  
Let us consider a compact set~$K = [p,p+1]\times [-k,k]$.
Denoting~$r = c+R_1+2$,  for $\varepsilon \in (0, 1/2)$, let us define
\begin{equation*}
s_1 = \max\Big\{ n - m,\, n_0 (\omega, n,p,[c +1], 1),\,   \frac{k}{r},\, \ln^2 \frac{\varepsilon}{16}\Big\}
\end{equation*}
and
\begin{equation*}
  s_2 = \max \Big\{ n_0(\omega,n,i,[c + 1], 1):\ |i| \le rs_1 +1\Big\} \vee \ln^2\frac{\varepsilon}{16},
\end{equation*}
where the random function~$n_0$ is introduced in Theorem~\ref{thm:all-space-time-point-compactness}.

We will need several truncated integrals: 
\begin{align*}
&\bar{Z}^{n,N}_{y,z; \temp} = \int_{-rs_2}^{rs_2}
  Z^{n,n+1}_{y,w; \temp}  Z^{n+1,N}_{w,z; \temp}\,  dw  =\int_{-rs_2}^{rs_2} 
    e^{-  \be [\frac{(w-y)^2}{2} +  F_{n+1}(w)]}  Z^{n+1,N}_{w,z,\temp} \, dw,  
  \\
& \bar{Z}^{m,n}_{x,y; \temp} = 
\begin{cases}
 Z^{m,n}_{x,y; \temp},&  n=m+1,  \\
\int_{-rs_1}^{rs_1} Z^{m,m+1}_{x,w; \temp}  Z^{m+1, n}_{w,y; \temp} \,  dw
  ,&  m > n+1,
\end{cases}
\\ 
 & \bar{Z}^{m,N}_{x,z; \temp} = \int_{-rs_1}^{rs_1}
   \bar{Z}^{m,n}_{x,y; \temp}   \bar{Z}^{n,N}_{y,z; \temp} \, dy,\\
  &   \bar{g}^N_{\temp}(x,y) =\int_{-cN}^{cN} \frac{ \bar{Z}^{n,N}_{y,z; \temp}}{
  \bar{Z}^{m,N}_{x,z; \temp}}\, \nu_{\temp}^N(dz).
\end{align*}

For $N > n$, we also define $h_{\varepsilon; \temp}^N = \temp \ln \bar{g}_{\temp}^N$ and~$\tilde{K} = [p,p+1]\times[-rs_1,rs_1] \supset K$.
If we can show that for every $\eps>0$,  all large $N$, and  all $\temp \in (0,1]$,
\begin{equation}\label{eq:eps-approximation-of-g}
    |\temp \ln g^N_{\temp}(x,y)- h_{\varepsilon; \temp}^N(x,y) | \le \varepsilon,\quad (x,y) \in
    \tilde{K}, 
  \end{equation}
and  that $\big( h_{\varepsilon; \temp}^N\big)$ is precompact in $C(\tilde{K})$, then the lemma
will follow  since, given any~${\eps>0}$, we will be able to use an $\eps$-net for~$(h_{\eps; \temp}^N)$ to construct a $2\eps$-net for~$( \temp \ln g_{\temp}^N)$.

  Let $N > \max\{m+2s_1, n+2s_2\}$.
  If $|y| \le rs_1$ and $|z| \le cN$,  then from~(\ref{eq:compactness-control-by-terminal-measure-all-space-time}) with~$v'=0$,
  $u_1 = c+1$, $u_0 = c$, $\nu = \delta_{z}$ and using~$\delta_z([-cN, cN]^c) = 0$,
  we obtain 
\begin{equation*}
  1 - \frac{\bar{Z}_{y,z; \temp}^{n,N}}{Z_{y,z; \temp}^{n,N}} 
  = \mu_{y,z; \temp}^{n, N} \pi_{n+1}^{-1} ([-rs_2, rs_2]^c) \le
    2e^{- \be \sqrt{s_2}} \le \varepsilon/8,  \quad \temp \in (0,1].
  \end{equation*}  
Then,  using the elementary inequality $|\ln(1+x)| \le 2|x|$ for $|x| \le
1/2$ we find
\begin{equation}
  \label{eq:estimate-on-above}
e^{-\varepsilon/4} \le \bar{Z}_{y,z; \temp}^{n,N} / Z_{y,z; \temp}^{n,N}  \le 1.
\end{equation}
Let 
\begin{equation*}
\tilde{Z}^{m,N}_{x,z; \temp} = \int_{-rs_1}^{rs_1} \bar{Z}^{m,n}_{x,y; \temp} Z^{n,N}_{y,z; \temp} dy.
\end{equation*}
Then (\ref{eq:estimate-on-above}) implies
\begin{equation}
  \label{eq:ratio-Z-tilde-and-Z-bar}
1\le \tilde{Z}_{x,z; \temp}^{m,N} / \bar{Z}_{x,z; \temp}^{m,N} \le e^{\varepsilon/4}.
\end{equation}
Similarly, if $x \in [p,p+1]$ and $|z| \le cN$, by~(\ref{eq:compactness-control-by-terminal-measure-all-space-time}),
we obtain
\begin{equation*}
 1- \frac{ \tilde{Z}_{x,z; \temp}^{m,N} }{Z_{x,z; \temp}^{m,N}}
  \le \mu_{x,z; \temp}^{m,N} \pi_{m+1}^{-1} ([-rs_1, rs_1]^c) + \mu_{x,z; \temp}^{m,N} \pi_n^{-1}([-rs_1,rs_1]^c)
 \le 4e^{- \be \sqrt{s_1}} \le \varepsilon/4.
\end{equation*}
Therefore, 
\begin{equation}
  \label{eq:estimate-on-below}
e^{-\varepsilon/2} \le \tilde{Z}_{x,z; \temp}^{m,N}/ Z_{x,z; \temp}^{m,N} \le e^{\varepsilon/2}.
\end{equation}
Combining (\ref{eq:estimate-on-above}), (\ref{eq:ratio-Z-tilde-and-Z-bar}) and (\ref{eq:estimate-on-below}) we obtain 
\begin{equation*}
e^{-\varepsilon} \le \bar{g}^N_{\temp}(x,y) / g^N_{\temp}(x,y) \le e^{\varepsilon},
\end{equation*}
and (\ref{eq:eps-approximation-of-g}) follows.

The next step is to show that $\big( h_{\eps; \temp}^{N} \big)$ is precompact.
For any $|w| \le rs_2$ and $y,y' \in [-rs_1, rs_1]$, we have
\begin{equation*}
\bigg| \frac{(y-w)^2}{2} - \frac{(y'-w)^2}{2} \bigg| \le r(s_1+s_2) |y-y'|.
\end{equation*}
Hence, the definition of $\bar{Z}_{\cdot, z; \temp}^{n,N}$ implies that
\begin{equation*}
 \big|  \temp\ln \bar{Z}^{n,N}_{y,z; \temp} - \temp  \ln\bar{Z}_{y',z; \temp}^{n,N} \big| \le r(s_1+s_2) |y-y'|.
\end{equation*}
Similarly, for all $x,x' \in [p,p+1]$, we have
\begin{equation*}
  \big|   \temp\ln \bar{Z}^{m,N}_{x,z; \temp} - \temp \ln\bar{Z}^{m,N}_{x',z; \temp} \big| \le     (rs_1 + |p| + 1 )|x-x'|. 
\end{equation*}
Combining these two inequalities we see that
\begin{equation}\label{eq:Lip-continuous}
  | h_{\varepsilon; \temp}^N(x,y) - h^N_{\varepsilon; \temp}(x',y') | \le L (|x-x'|+|y-y'|)
\end{equation}
for $L = r(s_1+s_2) + |p| + 1$.
So,  $h_{\varepsilon; \temp}^N$ are uniformly Lipschitz continuous and hence equicontinuous
on $\tilde{K}$.

It remains to show that $h_{\varepsilon; \temp}^{N}$ are uniformly bounded.
Let
\begin{equation*}
 \bar{f}_{\temp}^N(x,y) =    \int_{-cN}^{cN} \frac{  \bar{Z}^{m,n}_{x,y; \temp} \bar{Z}^{n,N}_{y,z; \temp}}{
  \bar{Z}^{m,N}_{x,z; \temp}} \, \nu^N_{\temp}(dz) = \exp \big(   {\textstyle \be h_{\eps; \temp}^N(x,y) } \big)  \bar{Z}_{x,y; \temp}^{m,n}.
\end{equation*}
For each $x \in [p,p+1]$, we have $\int_{-rs_1}^{rs_1} \bar{f}_{\temp}^N(x,y')\,  dy' = 1$.
Let  
\begin{equation*}
M = \sup \{ |\temp\ln \bar{Z}^{m,n}_{x,y ; \temp}|: \temp \in (0,1], (x,y) \in \tilde{K}\}.
\end{equation*}
It is easy to see that~$M < \infty$ a.s. 
Then, by~(\ref{eq:Lip-continuous}) we have for $y,y' \in [-rs_1,rs_1]$, 
\begin{equation*}
  e^{-\be \big(  L\cdot 2r s_1 + M\big)} \bar{f}_{\temp}^N(x,y') \le 
  e^{\be h_{\eps; \temp}^N(x,y)}
  \le \bar{f}_{\temp}^N(x,y') e^{\be \big(  L\cdot 2r s_1 + M\big)}
\end{equation*}
Integrating this inequality over $y'\in [-rs_1, rs_1]$ gives us
\[
   e^{-\be \big(  L\cdot 2r s_1 + M\big)} \le 2rs_1   e^{\be
     h_{\eps; \temp}^N(x,y)}
   \le   e^{\be \big(  L\cdot 2r s_1 + M\big)}.
 \]
 Taking the logarithm gives~$|h_{\eps; \temp}^N(x,y)|  \le L\cdot 2rs_1 + M + |\ln (2rs_1)|$, 
 so~$|h_{\eps; \temp}^N(x,y)|$ are uniformly bounded on $\tilde{K}$.
\end{proof}

\bibliographystyle{alpha}
\bibliography{Burgers}

\end{document}